\documentclass[a4paper,reqno]{amsart}
\usepackage{amssymb, amsmath, amsthm, mathrsfs, ascmac,color}
\usepackage[pdftex]{graphicx} %% arXive 
\usepackage[subrefformat=parens]{subcaption}
\usepackage[top=2.5cm, bottom=2.5cm, left=3cm,right=3cm]{geometry}
\usepackage{comment}
\usepackage{harvard}
\usepackage[foot]{amsaddr}
\newtheoremstyle{mystyle}% % Name
  {}%                      % Space above
  {}%                      % Space below
  {\normalfont}%           % Body font
  { }%                  % Indent amount
  {\bfseries}%             % Theorem head font
  {}%                      % Punctuation after theorem headｘ
  {10pt}%                  % Space after theorem head, ' ', or \newline
  { }%                     % Theorem head spec (can be left empty, meaning `normal')
\theoremstyle{mystyle}
\newtheorem{thm}{Theorem}

\newtheorem{lem}{Lemma}

\newtheorem{rmk}{Remark}
\makeatletter

\@addtoreset{equation}{section}
\makeatother
%%%%%%%%%%%%

\newcommand{\dd}{\mathrm d}

\newcommand{\EE}{\mathbb E}

\newcommand{\GG}{\mathscr G_{i-1}^n}

\newcommand{\DeX}{\Delta X_i}
\newcommand{\Xt}{X_{t_i}}
\newcommand{\Xs}{X_{t_{i-1}}}
\newcommand{\tr}{\mathrm{tr}}

\newcommand{\lto}{\longrightarrow}
\newcommand{\pto}{\stackrel{p}{\longrightarrow}}
\newcommand{\dto}{\stackrel{d}{\longrightarrow}}

\newcommand{\TT}{\mathsf T}
\newcommand{\Mi}{\mathcal M_{2,i}}
\newcommand{\Qn}{\mathcal Q_n}

\newcommand{\Dea}{\vartheta_\alpha}
\newcommand{\Deb}{\vartheta_\beta}
\newcommand{\Dek}{\vartheta_{\beta_k}}
\newcommand{\Deko}{\vartheta_{\beta_1}}
\newcommand{\Dekt}{\vartheta_{\beta_2}}
\newcommand{\TUO}{{\mathcal T}_{1,n}^{(1)}}
\newcommand{\TOO}{{\mathcal T}_{1,n}^{(2)}}
\newcommand{\TUT}{{\mathcal T}_{2,n}^{(1)}}
\newcommand{\TOT}{{\mathcal T}_{2,n}^{(2)}}
\newcommand{\TLA}{{\mathcal L}_{1,n}^{(1)}}
\newcommand{\TLB}{{\mathcal L}_{1,n}^{(2)}}
\newcommand{\TLC}{{\mathcal L}_{2,n}^{(1)}}
\newcommand{\TLD}{{\mathcal L}_{2,n}^{(2)}}
\newcommand{\TUA}{{\mathcal U}_{1,n}^{(1)}}
\newcommand{\TUB}{{\mathcal U}_{1,n}^{(2)}}
\newcommand{\TUC}{{\mathcal U}_{2,n}^{(1)}}
\newcommand{\TUD}{{\mathcal U}_{2,n}^{(2)}}
\newcommand{\mma}{{\mathrm m}_1}
\newcommand{\mmb}{{\mathrm m}_2}
\newcommand{\mmc}{{\mathrm m}_3}
\newcommand{\mmd}{{\mathrm m}_4}

\allowdisplaybreaks
\begin{document}
\bibliographystyle{plain}
\title[
	Change point inference 
	in ergodic diffusion processes
]
{
	Change point inference 
	in ergodic diffusion processes
	based on high frequency data
}
\author[Y. Tonaki]{Yozo Tonaki}
\address[Y. Tonaki]{
	Graduate School of Engineering Science, Osaka University, 1-3, 
	Machikaneyama, Toyonaka, Osaka, 560-8531, Japan
}
%\author[Y. Kaino]{Yusuke Kaino}
%\address[Y. Kaino]{
%Graduate School of Engineering Science, Osaka University, 1-3, 
%Machikaneyama, Toyonaka, Osaka, 560-8531, Japan}
\author[M. Uchida]{Masayuki Uchida}
\address[M. Uchida]{
	Graduate School of Engineering Science, 
	and Center for Mathematical Modeling and Date Science,
	Osaka University, 1-3, Machikaneyama, Toyonaka, Osaka, 560-8531, Japan
}

\keywords{
	Adaptive test, change point detection, change point estimation, 
	diffusion processes, high frequency data
}

\maketitle

%%%%%%%%%%%%%%%%%%%% ABSTRACT %%%%%%%%%%%%%%%%%%%%
\begin{abstract}
We deal with the change point problem in ergodic diffusion processes
based on high frequency data.
Tonaki et al. (2020, 2021) 
%\cite{TKU2020,TKU2021}
studied the change point problem for the ergodic diffusion process model.
However, the change point problem for the drift parameter 
when the diffusion parameter changes is still open.
Therefore, we consider the change detection 
and the change point estimation for the drift parameter 
taking into account that there is a change point in the diffusion parameter. 
Moreover, we examine the performance of the tests and the estimation
with numerical simulations.
\end{abstract}

%%%%%%%%%%%%%%%%%%%%%%%%%%%%%%
\section{Introduction}\label{sec1}
We consider a $d$-dimensional diffusion process $\{X_t\}_{t\ge0}$ 
satisfying the stochastic differential equation
\begin{align}\label{sde}
\begin{cases}
\dd X_t=b(X_t,\beta)\dd t+a(X_t,\alpha)\dd W_t,\\
X_0=x_0,
\end{cases}
\end{align}
where parameter space
$\Theta=\Theta_A\times\Theta_B$, which is a  
compact convex subset of 
$\mathbb R^p\times\mathbb R^q$, 
$\theta=(\alpha,\beta)\in\Theta$ is an unknown parameter and
$\{W_t\}_{t\ge0}$ 
is an $r$-dimensional standard Wiener process.
The diffusion coefficient 
$a:\mathbb R^d\times\Theta_A\lto\mathbb R^d\otimes\mathbb R^r$ and
the drift coefficient $b:\mathbb R^d\times\Theta_B\lto\mathbb R^d$
are known
except for the parameter $\theta$. 
%and $\theta^*=(\alpha^*,\beta^*)$ denotes the true parameter.
We assume that the solution of \eqref{sde} exists, and 
$P_\theta$ and $\EE_\theta$ denote the law of the 
solution and the expectation with respect to $P_\theta$, respectively. 
Let $\{\Xt\}_{i=0}^n$ be discrete observations, 
where $t_i=t_i^n=ih_n$, 
and $\{h_n\}$ is a positive sequence with $h=h_n\lto0$, $T=nh\lto\infty$ and 
$nh^2\lto0$ as $n\to\infty$.

In this paper, we consider the change point problem 
for ergodic diffusion processes.
The change point problem for diffusion processes
based on discrete observations 
has been studied by many researchers,
for non-ergodic, see
De Gregorio and Iacus (2008), 
%\cite{DeGregorioIacus2008},
Iacus and Yoshida (2012) 
%\cite{IacusYoshida2012}
and
for ergodic, see
Song and Lee (2009), 
%\cite{SongLee2009}, 
Lee (2011),
%\cite{Lee2011}, 
Negri and Nishiyama (2017),
%\cite{NegriNishiyama2017},
Song (2020),
%\cite{Song2020},
Tonaki et al. (2020,2021)
%\cite{TKU2020,TKU2021}
and reference therein.
%%%%%%%%%%
De Gregorio and Iacus (2008) 
%\cite{DeGregorioIacus2008} 
considered the test for changes in the diffusion parameter 
and the change point estimation in non-ergodic case, 
and 
Iacus and Yoshida (2012)
%\cite{IacusYoshida2012}
generalized the estimation method.
Song and Lee (2009),
%\cite{SongLee2009},
Lee (2011),
%\cite{Lee2011},
and 
Song (2020)
%\cite{Song2020}
considered the test for changes in the diffusion parameter in ergodic case.
Furthermore, 
Negri and Nishiyama (2017)
%\cite{NegriNishiyama2017}
and 
Tonaki et al. (2020)
%\cite{TKU2020}
proposed simultaneous test and adaptive tests 
for changes in the diffusion and drift parameters, respectively,
and
Tonaki et al. (2021)
%\cite{TKU2021}
treated change point estimation 
for the case where there is a change in the diffusion parameter 
and 
for the case where there is no change in the diffusion parameter 
but there is a change in the drift parameter.
The method proposed by Tonaki et al. (2020,2021)
%\cite{TKU2020,TKU2021}
is summarized as follows.
First, test whether there is any change in the diffusion parameter.
If any change is detected, estimate the change point of the diffusion parameter.
If not, assume that there is no change in the diffusion parameter, and
test whether there is any change in the drift parameter.
If any change is detected, estimate the change point of the drift parameter.
This method allows us to estimate the change point in the diffusion parameter 
if a change in the diffusion parameter is detected, 
and to estimate the change point in the drift parameter 
if no change in the diffusion parameter is detected, 
but a change in the drift parameter is detected.

\begin{figure}[h]
\captionsetup{margin=5pt}
\captionsetup[sub]{margin=5pt}
 \begin{minipage}[t]{0.49\linewidth}
  \centering
  \includegraphics[keepaspectratio, width=70mm]{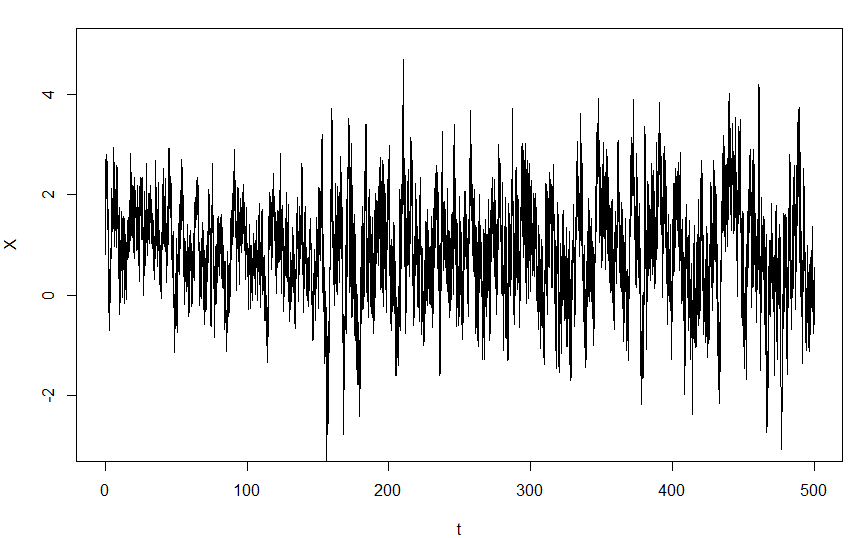}
  \subcaption{
  		$\alpha$ changes from $1$ to $1.5$ at $t=150$,
  		and
  		$(\beta,\gamma)=(1,1)$ does not change.
  }\label{ex1}
 \end{minipage}
 \begin{minipage}[t]{0.49\linewidth}
  \centering
  \includegraphics[keepaspectratio, width=70mm]{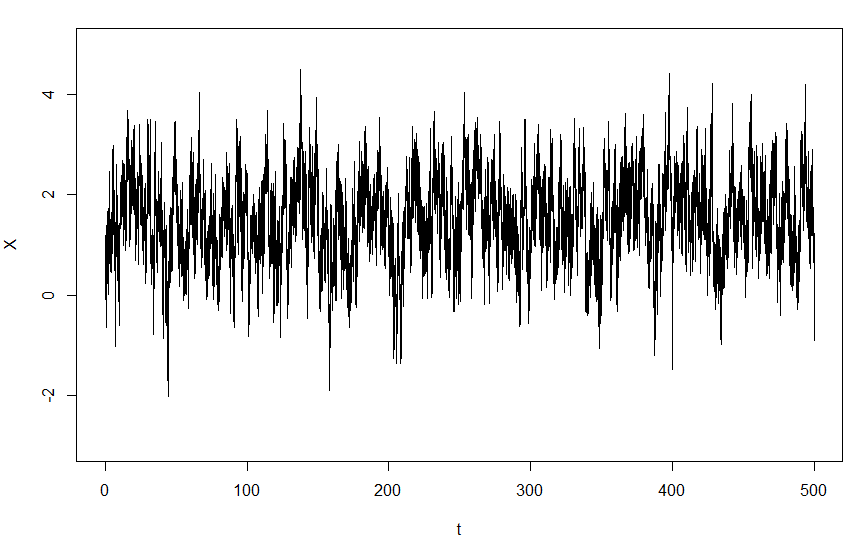}
  \subcaption{
  		$(\alpha,\beta,\gamma)=(1.2,1,1.5)$ does not change.
  }\label{ex2}
 \end{minipage}\\
 \begin{minipage}[t]{0.49\linewidth}
  \centering
  \includegraphics[keepaspectratio, width=70mm]{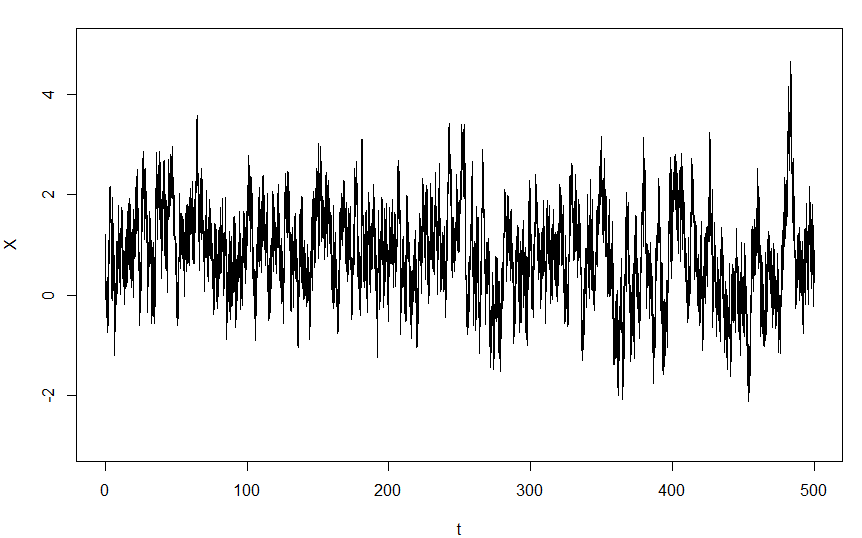}
  \subcaption{
  		$\alpha=1$ does not change, 
  		and
  		$(\beta,\gamma)$ changes from $(1,1)$ to $(0.5,0.5)$ at $t=250$.
  }\label{ex3}
 \end{minipage}
 \begin{minipage}[t]{0.49\linewidth}
  \centering
  \includegraphics[keepaspectratio, width=70mm]{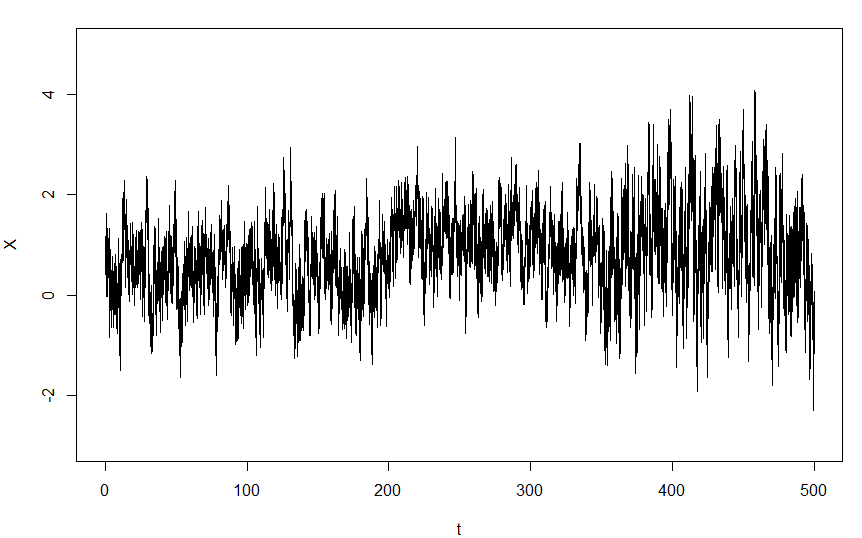}
  \subcaption{
  		$\alpha$ changes from $1$ to $1.5$ at $t=350$,
  		and
  		$(\beta,\gamma)$ changes from $(1.3,0.5)$ to $(1.3,1)$ at $t=200$.
  }\label{ex4}
 \end{minipage}
 \caption{
 		Sample paths of the Ornstein-Uhlenbeck process
 		$\dd X_t=-\beta(X_t-\gamma)\dd t+\alpha\dd W_t$.
 }
 \label{fig_intro}
\end{figure}

The purposes of the change point problem in the ergodic diffusion processes 
are to investigate whether there is a change point 
in the diffusion and drift parameters, and if so, where it is. 
Given the data for the paths as shown in Figure \ref{fig_intro},
using the method of Tonaki et al. (2020,2021),
%\cite{TKU2020,TKU2021},
we can infer that the diffusion parameter changes at $t=150$ and $t=350$ 
for (A) and (D), respectively, 
and that there is no change in the diffusion parameter, 
but the drift parameter changes at $t=250$ for (C). 
Furthermore, for (B), 
it can be inferred that 
there is no change in either the diffusion or drift parameters.
However, Tonaki et al. (2020,2021)
%\cite{TKU2020,TKU2021}
do not provide discussion 
for the change of the drift parameter 
when there is a change in the diffusion parameter such as (A) and (D).
In other words, 
we have no way to infer that 
there is no change point in the drift parameter in (A), 
and that the drift parameter changes at $t=200$ in (D).
Thus, in this paper, 
we propose a method for 
detecting changes in the drift parameter 
and estimating the change point 
when the diffusion parameter changes. 

This paper is organized as follows.
Section \ref{secR} reviews the change point inference for the diffusion parameter.
In Section \ref{sec2}, 
we propose test methods to detect changes in the drift parameter 
when there is a change point in the diffusion parameter,
and
show the asymptotic properties %of the proposed test statistics 
under the null hypothesis 
and the consistency of the proposed test statistics.
Moreover, we propose change point estimators for the drift parameter
in Section \ref{sec3}.
In Section \ref{sec4}, we give some examples and simulation studies.
Finally, Section \ref{sec5} is devoted to the proofs of the results 
of Sections \ref{sec2} and \ref{sec3}.

%%%%%%%%%%%%%%%%%%%%%%%%%%%%%%
\section{Change point inference for diffusion parameter}\label{secR}
In this section, 
we review the change point detection and estimation 
for the diffusion parameter 
before considering the change point problem for the drift parameter 
when there is a change point in the diffusion parameter.
See Tonaki et al. (2020,2021) for details.

We set the following notations.
\begin{enumerate}
\item[1.]
For a matrix $M$,
$M^\TT$ denotes the transpose of $M$
and $M^{\otimes2}=MM^\TT$.

%For a matrix $M\in\mathbb R^{d_1}\otimes\mathbb R^{d_2}$,
%$M^\TT\in\mathbb R^{d_2}\otimes\mathbb R^{d_1}$
%denotes the transpose of $M$
%and $M^{\otimes2}=MM^\TT\in\mathbb R^{d_1}\otimes\mathbb R^{d_1}$.

\item[2.]
Let $A(x,\alpha)=a(x,\alpha)^{\otimes2}$ and $\DeX=\Xt-\Xs$.

\item[3.]
For $k=1,2, \ldots$, and $\epsilon\in(0,1)$,
$\{\boldsymbol B_k^0(s)\}_{0\le s\le 1}$ denotes
a $k$-dimensional Brownian bridge,
and
$w_k(\epsilon)$ denotes 
the upper-$\epsilon$ point  of 
$\displaystyle \sup_{0\le s\le 1}\|\boldsymbol B_k^0(s)\|$, that is,
\begin{align*}
P\biggl(\sup_{0\le s\le 1}\|\boldsymbol B_k^0(s)\|>w_k(\epsilon)\biggr)=\epsilon.
\end{align*}

\item[4.]
Let $\{\mathbb W(s)\}_{s\ge0}$ be a two-sided standard Wiener process.

\item[5.]
Let $C^{k,\ell}_{\uparrow}(\mathbb R^d\times\Theta)$ be 
the space of all functions $f$ satisfying the following conditions.

\begin{enumerate}
\item[(i)] 
$f$ is continuously differentiable with respect to $x\in\mathbb R^d$ up to
order $k$ for all $\theta\in\Theta$,
\item[(ii)]  
$f$ and all its $x$-derivatives up to order $k$ are 
$\ell$ times continuously differentiable with respect to $\theta\in\Theta$,
\item[(iii)]  
$f$ and all derivatives are of polynomial growth in $x\in\mathbb R^d$ 
uniformly in $\theta\in\Theta$, i.e.,  
$g$ is of polynomial growth in $x\in\mathbb R^d$ uniformly 
in $\theta\in\Theta$ if, for some $C>0$, 
\begin{align*}
\sup_{\theta\in\Theta}\|g(x,\theta)\|\le C(1+\|x\|)^C.
\end{align*}
\end{enumerate}

\end{enumerate}

We make the following assumptions.
\begin{enumerate}
\renewcommand{\labelenumi}{{\textbf{[C\arabic{enumi}]}}}

\item 
There exists a constant $C>0$ such that for any $x,y\in\mathbb R^d$, 
\begin{align*}
\sup_{\alpha\in\Theta_A}\|a(x,\alpha)-a(y,\alpha)\|
+\sup_{\beta\in\Theta_B}\|b(x,\beta)-b(y,\beta)\|\le C\|x-y\|.
\end{align*}

\item   
$\displaystyle\sup_t\EE_{\theta}\bigl[\|X_t\|^k\bigr]<\infty$
for all $k\ge0$ and $\theta\in\Theta$.

\item $\displaystyle\inf_{x,\alpha}\det A(x,\alpha)>0$.

\item 
$a\in C^{4,4}_{\uparrow}(\mathbb R^d\times\Theta_A)$
and
$b\in C^{4,4}_{\uparrow}(\mathbb R^d\times\Theta_B)$.

\item 
The solution of \eqref{sde} is ergodic with its invariant measure $\mu_\theta$ 
such that 
\begin{align*}
\int_{\mathbb R^d}\|x\|^k\dd\mu_\theta(x)<\infty
\end{align*}
for all $k\ge0$ and $\theta\in\Theta$,
and for all measurable function $f$,  
\begin{align*}
\int_{\mathbb R^d}f(x)\dd \mu_{\theta_n}(x)
\lto
\int_{\mathbb R^d}f(x)\dd \mu_{\theta_0}(x)
\end{align*}
as $\theta_n\lto\theta_0$.

\item
If there exist $m\ge1$, 
$\theta_0,\ldots,\theta_{m-1}\in\mathrm{Int}\, \Theta$, 
$\tau_1,\ldots,\tau_{m-1}\in(0,1)$
such that
\begin{align*}
X_t
=
\begin{cases}
X_t(\theta_0), &t\in[0,\tau_1 T),\\
X_t(\theta_1), &t\in[\tau_1 T,\tau_2 T),\\
\quad\vdots\\
X_t(\theta_{m-1}), &t\in[\tau_{m-1} T,T],
\end{cases}
\end{align*}
then for any $f\in C^{1,1}_{\uparrow}(\mathbb R^d\times\Theta)$, 
$j=1,\ldots,m-1$,
and $\delta\in(1,2)$ with $nh^\delta\lto\infty$,
\begin{align*}
\max_{[n^{1/\delta}]\le k\le [n\tau_{j+1}]-[n\tau_j]}
\left|
\frac1k\sum_{i=[n\tau_j]+1}^{[n\tau_j]+k}f(\Xs,\theta_j)
-\int_{\mathbb R^d}f(x,\theta_j)\dd\mu_{\theta_j}(x)
\right|\pto0.
\end{align*}
\end{enumerate}

\begin{rmk}
If for all $j=1,2,\ldots,m-1$, 
$\{X_t^{(j)}\}_{t\ge0}$, $X_t^{(j)}=X_t(\theta_j)$ are stationary, 
then \textbf{[C6]} is satisfied from Lemma 4.3 of Song and Lee (2009).
%\cite{SongLee2009}.
\end{rmk}

\subsection{Test for a change in diffusion parameter}\label{secR-1}
First, we consider the change detection for the diffusion parameter.
For simplicity, we assume that there is a change point 
under the alternative hypothesis, that is, 
we consider the following hypothesis testing problem.

$H_0^\alpha: \alpha$ does not change over $[0,T]$
\ v.s.\   
$H_1^\alpha:$ There exists $\tau_*^\alpha\in(0,1)$ such that 
\begin{align*}
\alpha
=
\begin{cases}
\alpha_{1}^*, &t\in[0,\tau_*^\alpha T),\\
\alpha_{2}^*, &t\in[\tau_*^\alpha T,T],
\end{cases}
\end{align*}
where $\alpha_1^*,\alpha_2^*\in\mathrm{Int}\, \Theta_A$
and $\alpha_1^*\neq\alpha_2^*$.

We consider the following two cases.
\begin{description}
\item[Case A]
The parameters $\alpha_1^*$ and $\alpha_2^*$ depend on $n$, 

\item[Case B]
The parameters $\alpha_1^*$ and $\alpha_2^*$ are fixed and not depend on $n$.
\end{description}
In Case A, we assume the following convergences hold.
\begin{enumerate}
\item[(i)]
$\Dea=|\alpha_1^*-\alpha_2^*|\lto0$ and $n\Dea^2\lto\infty$ as $n\to\infty$.

\item[(ii)]
$\alpha_1^*$ and $\alpha_2^*$ converge to $\alpha_0\in\mathrm{Int}\,\Theta_A$
as $n\to\infty$.

\end{enumerate}
We make the following assumptions.
\begin{enumerate}
\renewcommand{\labelenumi}{\textbf{[D\arabic{enumi}]}}

\item Under $H_0^\alpha$, there exists an estimator $\hat\alpha$ such that 
$\sqrt n(\hat\alpha-\alpha)=O_p(1)$.

\item
Under $H_1^\alpha$,
there exists an estimator $\hat\alpha$ such that 
$\vartheta_\alpha^{-1}(\hat\alpha-\alpha_0)=O_p(1)$
in Case A.

\item 
$\vartheta_\alpha^{-1}(\alpha_k^*-\alpha_0)$ converges to $c_k\in\mathbb R^p$ 
as $n\to\infty$ for $k=1,2$.
Under $H_1^\alpha$,
\begin{align*}
\int_{\mathbb R^d}
\left[\tr\bigl(
A^{-1}\partial_{\alpha^\ell} A(x,\alpha_0)
\bigr)
\right]_\ell
\dd\mu_{\alpha_0}(x)(c_1-c_2)\neq0
\end{align*}
in Case A.

\item Under $H_1^\alpha$, there exist $\alpha'\in\mathrm{Int}\,\Theta_A$ and 
an estimator $\hat\alpha$ such that $\hat\alpha-\alpha'=o_p(1)$
in Case B.

\end{enumerate}
Let
\begin{align*}
F(\alpha)=\int_{\mathbb R^d}
\tr\bigl(
A^{-1}(x,\alpha')A(x,\alpha)
\bigr)\dd\mu_{\alpha}(x).
\end{align*}
\begin{enumerate}
\renewcommand{\labelenumi}{\textbf{[D\arabic{enumi}]}}
\setcounter{enumi}{4}
\item
Under $H_1^\alpha$, $F(\alpha_1^*)\neq F(\alpha_2^*)$  
in Case B.

\end{enumerate}
%%%%
\begin{rmk}
See Uchida and Yoshida (2012,2014), Kaino and Uchida (2018), and Tonaki et al. (2020) 
for constructing an estimator $\hat\alpha$ 
that satisfies \textbf{[D1]}, 
Tonaki et al. (2021) for \textbf{[D2]}, 
and Uchida and Yoshida (2011) and Tonaki et al. (2020) for \textbf{[D4]}.
\end{rmk}
Let 
\begin{align*}
\hat\eta_i
&=\tr\biggl(A^{-1}(\Xs,\hat\alpha)
\frac{(\Delta X_i)^{\otimes2}}{h}\biggr).
\end{align*}
The test statistic for the diffusion parameter is as follows.
\begin{align*}
{\mathcal T}_n^\alpha=\frac{1}{\sqrt{2dn}}\max_{1\le k\le n}
\left|\sum_{i=1}^k\hat\eta_i-\frac{k}{n}\sum_{i=1}^n\hat\eta_i\right|.
\end{align*}
The following theorem is the result for the asymptotic distribution 
under the null hypothesis and the consistency of the test $\mathcal T_n^\alpha$.
\begin{thm}[Tonaki et al., 2020,2021]\label{th1_TKU}
Suppose that \textbf{[C1]}-\textbf{[C6]} hold.
\begin{enumerate}
\item[(1)]
If \textbf{[D1]} is satisfied,
then 
$\mathcal T_n^\alpha\dto\displaystyle\sup_{0\le s\le 1}|\boldsymbol B_1^0(s)|$
under $H_0^\alpha$.

\item[(2)]
If \textbf{[D2]} and \textbf{[D3]} are satisfied,
then for $\epsilon\in(0,1)$,
$P(\mathcal T_n^\alpha>w_1(\epsilon))\lto1$ under $H_1^\alpha$ in Case A.

\item[(3)]
If  \textbf{[D4]} and \textbf{[D5]} are satisfied,
then for $\epsilon\in(0,1)$,
$P(\mathcal T_n^\alpha>w_1(\epsilon))\lto1$ under $H_1^\alpha$ in Case B.

\end{enumerate}
\end{thm}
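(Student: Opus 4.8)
The plan is to handle the three parts separately, with the functional central limit theorem (FCLT) for the centered partial sums under $H_0^\alpha$ carrying the main analytic weight. Write $S_n(k)=\sum_{i=1}^k\hat\eta_i-\frac kn\sum_{i=1}^n\hat\eta_i$, so that $\mathcal T_n^\alpha=(2dn)^{-1/2}\max_{1\le k\le n}|S_n(k)|$, and introduce the oracle variables $\eta_i=\tr\bigl(A^{-1}(\Xs,\alpha)(\DeX)^{\otimes2}/h_n\bigr)$ built with the true parameter. For part (1) I would first remove the estimator: a Taylor expansion gives $\hat\eta_i-\eta_i=(\hat\alpha-\alpha)^\TT\partial_\alpha\eta_i+O(|\hat\alpha-\alpha|^2)$, and in the tied-down statistic the deterministic (ergodic) part of the leading term cancels, so that by \textbf{[D1]} ($\sqrt n(\hat\alpha-\alpha)=O_p(1)$) together with the ergodic control \textbf{[C5]}--\textbf{[C6]} of the partial sums of $\partial_\alpha\eta_i$, the replacement error $(2dn)^{-1/2}\max_{1\le k\le n}|S_n(k)-\tilde S_n(k)|$ is $O_p(n^{-1/2})$, where $\tilde S_n$ is the analogue of $S_n$ with $\eta_i$ in place of $\hat\eta_i$. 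It then suffices to analyze $\tilde S_n$, and I would split $\eta_i=d+\zeta_i+\rho_i$ with the martingale difference $\zeta_i=\eta_i-\EE[\eta_i\mid\GG]$ and the bias $\rho_i=\EE[\eta_i\mid\GG]-d$. Using $\DeX=a(\Xs,\alpha)\Delta W_i+(\text{drift})$ and \textbf{[C1]}--\textbf{[C4]} one finds $\EE[\eta_i\mid\GG]=d+\Ri$ and $\EE[\zeta_i^2\mid\GG]=2d+\Ri$; since $\bigl|\sum_{i=1}^k\rho_i-\frac kn\sum_{i=1}^n\rho_i\bigr|\le2\sum_{i=1}^n|\rho_i|=O_p(nh_n)$, the normalized bias is $O_p(\sqrt n\,h_n)=O_p(\sqrt{nh_n^2})$, which vanishes precisely because $nh_n^2\to0$.

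The core step is an FCLT for $M_n(s)=(2dn)^{-1/2}\sum_{i=1}^{[ns]}\zeta_i$ in $D[0,1]$. I would verify the two standard conditions of the martingale FCLT: the quadratic-characteristic convergence $(2dn)^{-1}\sum_{i=1}^{[ns]}\EE[\zeta_i^2\mid\GG]\pto s$, which is immediate from $\EE[\zeta_i^2\mid\GG]=2d+\Ri$, and the conditional Lyapunov bound $(2dn)^{-2}\sum_{i=1}^n\EE[\zeta_i^4\mid\GG]\pto0$, from the Gaussian-type fourth moment of $(\DeX)^{\otimes2}/h_n$ and the moment estimates \textbf{[C2]}--\textbf{[C4]}. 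These give $M_n\dto\mathbb W$ weakly in $D[0,1]$, so the tied-down process $M_n(s)-s\,M_n(1)$ converges to $\mathbb W(s)-s\,\mathbb W(1)$, whose law is that of $\{\boldsymbol B_1^0(s)\}$; applying the continuous mapping theorem to $x\mapsto\sup_{0\le s\le1}|x(s)|$ yields $\mathcal T_n^\alpha\dto\sup_{0\le s\le1}|\boldsymbol B_1^0(s)|$. Establishing this FCLT, and in particular the uniform-in-$k$ negligibility of the estimator-replacement and drift-bias terms, is the step I expect to be the main obstacle.

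For part (2), under $H_1^\alpha$ in Case A the conditional mean of $\hat\eta_i$ splits by regime; with $\hat\alpha\pto\alpha_0$ from \textbf{[D2]} a Taylor expansion gives, on the $k$-th regime, $\EE[\hat\eta_i\mid\GG]=d+\tr\bigl(A^{-1}(\Xs,\alpha_0)\partial_\alpha A(\Xs,\alpha_0)\bigr)(\alpha_k^*-\hat\alpha)+\cdots$, so the estimator enters only through a term common to both regimes, which cancels in the tied-down difference. Using \textbf{[C6]} on the blocks before and after $\tau_*^\alpha$, the deterministic part of $S_n([\tau_*^\alpha n])$ is asymptotically $n\,\tau_*^\alpha(1-\tau_*^\alpha)\,\Dea\int_{\mathbb R^d}\bigl[\tr\bigl(A^{-1}\partial_{\alpha^\ell}A(x,\alpha_0)\bigr)\bigr]_\ell\,\dd\mu_{\alpha_0}(x)\,(c_1-c_2)$, which is nonzero by \textbf{[D3]}, while the fluctuation part is $O_p(\sqrt n)$. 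Hence $(2dn)^{-1/2}|S_n([\tau_*^\alpha n])|\ge c\sqrt n\,\Dea-O_p(1)\to\infty$ because $n\Dea^2\to\infty$, so $\mathcal T_n^\alpha\pto\infty$ and therefore $P(\mathcal T_n^\alpha>w_1(\epsilon))\to1$.

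Part (3) is analogous but with a non-vanishing signal. Using \textbf{[D4]} ($\hat\alpha\pto\alpha'$) and the ergodic theorem \textbf{[C6]} on each regime, $n^{-1}\sum_{i=1}^{[\tau_*^\alpha n]}\hat\eta_i\pto\tau_*^\alpha F(\alpha_1^*)$ and $n^{-1}\sum_{i=1}^n\hat\eta_i\pto\tau_*^\alpha F(\alpha_1^*)+(1-\tau_*^\alpha)F(\alpha_2^*)$, whence $n^{-1}S_n([\tau_*^\alpha n])\pto\tau_*^\alpha(1-\tau_*^\alpha)\bigl(F(\alpha_1^*)-F(\alpha_2^*)\bigr)$, which is nonzero by \textbf{[D5]}. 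Thus $|S_n([\tau_*^\alpha n])|$ is of exact order $n$ and $(2dn)^{-1/2}|S_n([\tau_*^\alpha n])|\sim c'\sqrt n\to\infty$, so again $P(\mathcal T_n^\alpha>w_1(\epsilon))\to1$.
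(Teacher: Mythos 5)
The paper never proves this statement: Theorem \ref{th1_TKU} is quoted as a known result of Tonaki et al.\ (2020, 2021), and Section \ref{sec5} proves only the new results (Theorems \ref{th1}--\ref{th6}), whose proofs in fact hand their key limit steps back to Theorems 2 and 3 of Tonaki et al.\ (2020). So there is no in-paper proof to compare against; judged on its own terms, your reconstruction follows exactly the route the cited proofs (and the analogous arguments in Section \ref{sec5}) take: oracle replacement of $\hat\alpha$, martingale-plus-bias decomposition of $\eta_i$ with the bias of order $O_p(\sqrt{n}h)=O_p(\sqrt{nh^2})$ killed by $nh^2\to0$, a martingale FCLT (McLeish-type; note McLeish (1974) in the paper's bibliography) for the tied-down partial sums, and, under the alternatives, evaluation of the CUSUM at $k=[n\tau_*^\alpha]$ to extract a drift of order $n\Dea$ (Case A) or $n$ (Case B) against $O_p(\sqrt n)$ fluctuations, so that $n\Dea^2\to\infty$ (resp.\ $n\to\infty$) forces the statistic to diverge. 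The architecture is sound and the conclusions follow, but two points need tightening. First, your claimed $O_p(n^{-1/2})$ rate for the estimator-replacement error is stronger than your argument delivers: after the exact cancellation of the $\int(\cdot)\,\dd\mu$ part, the tied-down partial sums of the ergodic component of $\partial_\alpha\eta_i$ are only $o_p(n)$ uniformly in $k$ (via a blocking argument under \textbf{[C5]}--\textbf{[C6]}), not $O_p(\sqrt n)$; multiplied by $|\hat\alpha-\alpha|=O_p(n^{-1/2})$ and divided by $\sqrt{2dn}$ this gives $o_p(1)$, which is all you need, but the rate claim should be weakened accordingly. Second, in parts (2) and (3) you should make explicit that the laws of large numbers on the post-change block are not the plain ergodic theorem: after $\tau_*^\alpha T$ the process is restarted from a nonstationary state with a new parameter, and \textbf{[C6]} is precisely what licenses the regime-wise uniform ergodic averages, while the continuity $\mu_{\theta_n}\lto\mu_{\theta_0}$ in \textbf{[C5]} is what lets you replace integrals against $\mu_{\alpha_k^*}$ by integrals against $\mu_{\alpha_0}$ in Case A; you cite \textbf{[C6]} but both uses should be spelled out.
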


\subsection{Estimation for a change in diffusion parameter}\label{secR-2}

If $H_0^\alpha$ is rejected, 
we next consider the change point estimation problem 
for the following stochastic differential equation.
%Next, we consider the change point estimation problem 
%for the following stochastic differential equation.
\begin{align*}
X_t
=
\begin{cases}
\displaystyle
X_0+\int_0^t b(X_s,\beta)\dd s+\int_0^t a(X_s,\alpha_1^*) \dd W_s, 
& t\in[0,\tau_*^\alpha T),\\
\displaystyle
X_{\tau_*^\alpha T}+\int_{\tau_*^\alpha T}^t b(X_s,\beta)\dd s
+\int_{\tau_*^\alpha T}^t a(X_s,\alpha_2^*) \dd W_s, & t\in[\tau_*^\alpha T,T],
\end{cases}
\end{align*}
where $\alpha_1^*,\alpha_2^*\in\mathrm{Int}\, \Theta_A$, $\alpha_1^*\neq\alpha_2^*$,
and $\tau_*^\alpha\in(0,1)$ is unknown.

%%%%%%%%%% ESTIMATION FOR THE DIFFUSION PARAMETER %%%%%%%%%%
We make the following assumptions.
\begin{enumerate}
\renewcommand{\labelenumi}{\textbf{[D\arabic{enumi}]}}
\setcounter{enumi}{5}

\item 
There exist estimators $\hat\alpha_k$ ($k=1,2$) such that  
$\sqrt n(\hat\alpha_k-\alpha_k^*)=O_p(1)$.

\item
$h/\Dea^2\lto\infty$ as $n\to\infty$, and $\Dea^{-1}(\alpha_k^*-\alpha_0)=O(1)$
in Case A.
\end{enumerate}
Let 
\begin{align*}
\Gamma^\alpha(x,\alpha_1,\alpha_2) 
=\tr\bigl(A^{-1}(x,\alpha_1)A(x,\alpha_2)-I_d\bigr)
-\log\det A^{-1}(x,\alpha_1)A(x,\alpha_2),
\end{align*}
and
$Q(x,\theta)$ be the coefficient of $h^2$ 
of $\EE_\theta[(\DeX)^{\otimes2}|\GG]$, 
where $\GG=\sigma\bigl[\{W_s\}_{s\le t_i^n}\bigr]$.

\begin{enumerate}
\renewcommand{\labelenumi}{\textbf{[D\arabic{enumi}]}}
\setcounter{enumi}{7}
\item
$\displaystyle
\inf_x \Gamma^\alpha(x,\alpha_1^*,\alpha_2^*)>0$.
\item
There exists a constant $C>0$ such that
\begin{enumerate}
\item
$\displaystyle\sup_{x,\alpha_k}
\bigl|
\partial_{(\alpha_1,\alpha_2)}\Gamma^\alpha(x,\alpha_1,\alpha_2)
\bigr|<C$,
\item 
$\displaystyle\sup_{x,\alpha_k}
\left|
\left[
\tr\Bigl(
\bigl(A^{-1}(x,\alpha_1)-A^{-1}(x,\alpha_2)\bigr)
\partial_{\alpha^{\ell}} A(x,\alpha_3)
\Bigr)
\right]_{\ell=1}^p
\right|<C$,
\item 
$\displaystyle\sup_{x,\theta}|Q(x,\theta)|<C$.
\end{enumerate}
\end{enumerate}
\begin{rmk}
See Section 3 in Tonaki et al. (2021) 
for how to construct estimators $\hat\alpha_1$ and $\hat\alpha_2$ 
satisfying \textbf{[D6]}. 
Tonaki et al. (2021)
assumed $T\Dea\lto0$ 
in addition to condition \textbf{[D7]}, 
but there is no need to assume it under \textbf{[C6]}.
The assumption that
$h/\Dea^2\lto\infty$ in \textbf{[D7]} is not necessary when $a(x,\alpha)=\alpha$.
\end{rmk}
Let
\begin{align*}
&F_i(\alpha)
=
\tr\left(A^{-1}(\Xs,\alpha)\frac{(\DeX)^{\otimes 2}}{h}\right)
+\log\det A(\Xs,\alpha),\\
&\Phi_n(\tau:\alpha_1,\alpha_2)
=
\sum_{i=1}^{[n\tau]}F_i(\alpha_1)+\sum_{i=[n\tau]+1}^nF_i(\alpha_2).
\end{align*}
The change point estimator for the diffusion parameter is as follows.
\begin{align*}
\hat\tau_n^\alpha
=\underset{\tau\in[0,1]}{\mathrm{argmin}}\,\Phi_n(\tau:\hat\alpha_1,\hat\alpha_2).
\end{align*}
In Case A, let for $v\in\mathbb R$,
\begin{align*}
e_\alpha
&=\lim_{n\to\infty}\Dea^{-1}(\alpha_1^*-\alpha_2^*),\\
\Xi^\alpha(x,\alpha)
&=
\Bigl[
\tr\bigl(
A^{-1}\partial_{\alpha^{\ell_1}}A
A^{-1}\partial_{\alpha^{\ell_2}}A(x,\alpha)
\bigr)
\Bigr]_{\ell_1,\ell_2=1}^p,\\
\mathcal J_\alpha
&=\frac12e_\alpha^\TT 
\int_{\mathbb R^d}
\Xi^\alpha(x,\alpha_0)
\dd\mu_{\alpha_0}(x)
e_\alpha,\\
\mathbb F(v)
&=-2\mathcal J_\alpha^{1/2}\mathbb W(v)+\mathcal J_\alpha|v|.
\end{align*}
The following results give the asymptotic behavior 
of the estimator $\hat\tau_n^\alpha$. 
\begin{thm}[Tonaki et al., 2021]\label{th2_TKU}
Suppose that \textbf{[C1]}-\textbf{[C6]} and \textbf{[D6]} hold.
\begin{enumerate}
\item[(1)]
Under  \textbf{[D7]}, 
$n\Dea^2(\hat\tau_n^\alpha-\tau_*^\alpha)
\dto\underset{v\in\mathbb R}{\mathrm{argmin}}\,\mathbb F(v)$
in Case A.

\item[(2)]
Under  \textbf{[D8]} and \textbf{[D9]}, 
$n(\hat\tau_n^\alpha-\tau_*^\alpha)=O_p(1)$
in Case B. 

\item[(3)]
Under  \textbf{[D8]},  \textbf{[D9]}(a) and (b), 
$n^{\epsilon_1}(\hat\tau_n^\alpha-\tau_*^\alpha)=o_p(1)$
for $\epsilon_1\in[0,\frac12)$
in Case B. 

\end{enumerate}
\end{thm}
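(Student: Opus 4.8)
The plan is to analyze the contrast function $\Phi_n(\tau:\hat\alpha_1,\hat\alpha_2)$ by first reducing it to a centered and rescaled random field, then identifying its limiting behavior separately in Cases~A and~B. The key observation is that $\hat\tau_n^\alpha$ minimizes $\Phi_n$, so it equivalently minimizes the increment $\Phi_n(\tau:\hat\alpha_1,\hat\alpha_2)-\Phi_n(\tau_*^\alpha:\hat\alpha_1,\hat\alpha_2)$. Writing $\tau=\tau_*^\alpha+u$ with $u$ on the relevant scale, this difference telescopes into a single sum of the terms $F_i(\hat\alpha_1)-F_i(\hat\alpha_2)$ over the indices lying between $[n\tau_*^\alpha]$ and $[n\tau]$. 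First I would replace $\hat\alpha_1,\hat\alpha_2$ by $\alpha_1^*,\alpha_2^*$ using \textbf{[D6]} (the $\sqrt n$-consistency), controlling the replacement error via the Lipschitz-type bounds in \textbf{[D9]}; this is where the differentiability estimates on $\Gamma^\alpha$ and on the trace expression are consumed.

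For Case~A, I would Taylor-expand $F_i(\alpha_1^*)-F_i(\alpha_2^*)$ in the small parameter $\Dea$ around the common limit $\alpha_0$, using the parametrization $e_\alpha=\lim\Dea^{-1}(\alpha_1^*-\alpha_2^*)$. The point is that at the correct scaling $n\Dea^2 u$, the drift part of each increment contributes a deterministic curvature term and the martingale part contributes a fluctuation. Using the ergodic averaging from \textbf{[C6]} (applied on each homogeneous stretch), the normalized drift accumulation converges to $\mathcal J_\alpha|v|$, while a central-limit/Donsker argument for the martingale increments of $(\DeX)^{\otimes2}/h$ gives the two-sided Wiener term $-2\mathcal J_\alpha^{1/2}\mathbb W(v)$; the condition $h/\Dea^2\lto\infty$ in \textbf{[D7]} is exactly what guarantees the fluctuation and the drift live on the same scale so that neither term degenerates. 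Then $n\Dea^2(\hat\tau_n^\alpha-\tau_*^\alpha)$ minimizes a process converging to $\mathbb F(v)$, and I would invoke an argmax-continuous-mapping theorem (in the spirit of the limit-of-argmin results for càdlàg two-sided processes) to pass the minimizer to $\mathrm{argmin}_v\mathbb F(v)$.

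For Case~B the scaling is $n$ rather than $n\Dea^2$, and the mechanism is different: here $\alpha_1^*\neq\alpha_2^*$ are fixed, so each wrongly-classified observation contributes an $O(1)$ signal. Using \textbf{[D8]}, namely $\inf_x\Gamma^\alpha(x,\alpha_1^*,\alpha_2^*)>0$, the contrast increment over $k$ misclassified indices is bounded below (in conditional expectation) by a positive multiple of $k$ by the ergodic theorem, while its centered fluctuation is $O_p(\sqrt k)$ by \textbf{[D9]}(c) (through the bound on $Q$). A standard tightness argument for the location of the minimizer of a random walk with positive drift and square-root fluctuations then yields $n(\hat\tau_n^\alpha-\tau_*^\alpha)=O_p(1)$ for part~(2), and the sharper localization $n^{\epsilon_1}(\hat\tau_n^\alpha-\tau_*^\alpha)=o_p(1)$ for $\epsilon_1<\tfrac12$ in part~(3) follows by exploiting that the drift overwhelms the fluctuation on any scale $k\gg n^{2\epsilon_1}$.

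The main obstacle I anticipate is the weak convergence of the rescaled contrast in Case~A to the limit process $\mathbb F(v)$ on the whole real line, uniformly on compacts: one must show finite-dimensional convergence of the martingale part to a two-sided Brownian term while simultaneously establishing tightness of the full two-sided field, and then argue that the argmin functional is continuous at the limit (in particular that the limiting $\mathrm{argmin}_v\mathbb F(v)$ is a.s.\ unique and that the minimizing sequence cannot escape to infinity). Controlling the discretization bias --- ensuring that the $h^2$-order conditional-moment term captured by $Q$ and the nonlinearity of $\log\det$ do not disturb the leading scaling --- under the joint regime $h\lto0$, $nh^2\lto0$, and $h/\Dea^2\lto\infty$ is the delicate bookkeeping that underlies this step.
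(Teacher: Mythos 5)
A preliminary structural remark: the paper you were given contains no proof of Theorem \ref{th2_TKU} at all --- it is a review result quoted from Tonaki et al.\ (2021), stated in Section \ref{secR}, and Section \ref{sec5} proves only the new results (Theorems \ref{th1}--\ref{th6}); even the closely analogous Theorems \ref{th5} and \ref{th6} are reduced to the contrast-increment decomposition and then deferred to Theorems 3 and 4 of Tonaki et al.\ (2021). Your skeleton does reproduce the standard route that this deferral points to: telescoping $\Phi_n(\tau:\hat\alpha_1,\hat\alpha_2)-\Phi_n(\tau_*^\alpha:\hat\alpha_1,\hat\alpha_2)$ into a one-sided sum of $F_i(\hat\alpha_1)-F_i(\hat\alpha_2)$ over the misclassified indices, replacing $\hat\alpha_k$ by $\alpha_k^*$ via \textbf{[D6]}, then an argmin continuous-mapping argument in Case A and a drift-versus-fluctuation tightness bound in Case B.

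There are, however, concrete misassignments of the hypotheses that would surface in a full write-up. First, the role of $h/\Dea^2\lto\infty$ in \textbf{[D7]} is not to make ``the fluctuation and the drift live on the same scale'': that balance is automatic, since over $k$ misclassified indices the signal is of order $k\Dea^2$ while the martingale fluctuation has standard deviation of order $\sqrt k\,\Dea$, which is exactly what produces the rate $n\Dea^2$. Rather, the localization window has $k\asymp\Dea^{-2}$ terms and hence spans time $kh\asymp h/\Dea^2$; the condition forces this span (equivalently, the number of terms relative to the threshold $[n^{1/\delta}]$ in \textbf{[C6]}) to diverge, so that the ergodic averaging of \textbf{[C6]} can be applied \emph{inside} the window and produce the invariant-measure integral appearing in $\mathcal J_\alpha$. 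This reading is confirmed by the paper's own remark that the condition is unnecessary when $a(x,\alpha)=\alpha$, in which case $\Xi^\alpha$ is constant in $x$ and no averaging is needed; your explanation cannot account for that remark. Second, you call part (3) a ``sharper localization'' than part (2); it is the opposite: $n^{\epsilon_1}(\hat\tau_n^\alpha-\tau_*^\alpha)=o_p(1)$ with $\epsilon_1<\frac12$ is a strictly weaker conclusion than $n(\hat\tau_n^\alpha-\tau_*^\alpha)=O_p(1)$, and it is obtained under a strictly weaker hypothesis set (dropping \textbf{[D9]}(c)), so it cannot simply ``follow'' from the part (2) argument --- one needs a cruder bound that avoids the uniform control of $Q$. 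Relatedly, \textbf{[D9]}(c) bounds the $h^2$-order discretization bias, not the stochastic fluctuation (which is handled by martingale moment bounds), and your first-step replacement of $\hat\alpha_k$ by $\alpha_k^*$ cannot invoke \textbf{[D9]} in Case A at all, since part (1) does not assume it; there one must work from \textbf{[C2]} and \textbf{[C4]}. These defects are repairable, but as written the proposal assigns several of the key assumptions to the wrong jobs.
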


\begin{rmk}
Since the $1$-dimensional Ornstein-Uhlenbeck process defined by
$\dd X_t=-\beta(X_t-\gamma)\dd t+\alpha\dd W_t$ 
($\alpha,\beta>0$, $\gamma\in\mathbb R$)
does not satisfy \textbf{[D9]}(3), but satisfies \textbf{[D9]}(1) and (2), 
we can estimate $\tau_*^\alpha$ by Theorem \ref{th2_TKU} (3) in this model. 
In contrast, the Hyperbolic diffusion model defined by
$\dd X_t=(\beta-\gamma X_t/\sqrt{1+X_t^2})\dd t+\alpha\dd W_t$ 
($\alpha>0$, $\beta\in\mathbb R$, $|\beta|<\gamma$) 
satisfies \textbf{[D9]}, and therefore $\tau_*^\alpha$ can be estimated 
by Theorem \ref{th2_TKU} (2) in this model.
\end{rmk}

\subsection{
Test and estimation for the drift parameter 
when $\boldsymbol{H_0^\alpha}$ is not rejected
}

If $H_0^\alpha$ is not rejected, i.e., 
no change in the diffusion parameter is detected, 
the next step is to investigate the change in the drift parameter. 
Specifically, 
the existence of change points in drift parameter is investigated,
and if any change is detected, the change point is estimated. 
In this case, see Subsection 2.2 of Tonaki et al. (2020) 
for the change detection method 
and Subsection 2.2 of Tonaki et al. (2021) 
for the change point estimation method.

%%%%%%%%%%%%%%%%%%%%
\section{Change point detection for drift parameter}\label{sec2}

The purpose of this paper is to infer the change point in the drift parameter 
when there is a change point in the diffusion parameter. 
In the following, we consider the situation that $H_0^\alpha$ is rejected 
in the hypothesis testing problem 
in Subsection \ref{secR-1}, 
%in Subsection 2.1 of Tonaki et al. (2020),
and the change point of the diffusion parameter is estimated 
in Subsection \ref{secR-2}. 
%in Subsection 2.1 of Tonaki et al. (2021).

\subsection{Test for changes in drift parameter}\label{sec2-1}
In this subsection, 
we consider the change detection for the drift parameter 
when there is a change point in the diffusion parameter, that is, 
we treat the following stochastic differential equation.
\begin{align*}
X_t
=
\begin{cases}
\displaystyle
X_0+\int_0^t b(X_s,\beta_1)\dd s+\int_0^t a(X_s,\alpha_1^*) \dd W_s, 
& t\in[0,\tau_*^\alpha T),\\
\displaystyle
X_{\tau_*^\alpha T}+\int_{\tau_*^\alpha T}^t b(X_s,\beta_2)\dd s
+\int_{\tau_*^\alpha T}^t a(X_s,\alpha_2^*) \dd W_s, & t\in[\tau_*^\alpha T,T],
\end{cases}
\end{align*}
where $\alpha_1^*,\alpha_2^*\in\mathrm{Int}\, \Theta_A$, 
$\alpha_1^*\neq\alpha_2^*$,
$\tau_*^\alpha\in(0,1)$ is unknown,
and
$\beta_1$ and $\beta_2$ may change, or be equal.

We consider the following two hypothesis testing problems.
\begin{center}
$H_0^{(1)}: \beta_1$ does not change over $[0, \tau_*^\alpha T]$
\quad v.s.\quad   
$H_1^{(1)}:$ not $H_0^{(1)}$
\end{center}
and
\begin{center}
$H_0^{(2)}: \beta_2$ does not change over $[\tau_*^\alpha T,T]$
\quad v.s.\quad   
$H_1^{(2)}:$ not $H_0^{(2)}$.
\end{center}
%%%%%
We make the following assumptions.
\begin{enumerate}
\renewcommand{\labelenumi}{{\textbf{[E\arabic{enumi}]}}}
%\setcounter{enumi}{5}
%\item
%There exist estimators $\hat\alpha_k$ ($k=1,2$) such that  
%$\sqrt n(\hat\alpha_k-\alpha_k^*)=O_p(1)$.

\item
There exists $\epsilon_1\in(0,1)$ such that
$n^{\epsilon_1}(\hat\tau_n^\alpha-\tau_*^\alpha)=o_p(1)$.

\item 
For $k=1,2$, there exists an estimator $\hat\beta_k$ such that  
%\begin{align*}
$\sqrt{T}(\hat\beta_k-\beta_k)=O_p(1)$
under $H_0^{(k)}$. 
%\end{align*}
\end{enumerate}
%%%%%
\begin{rmk}
Let $\hat\alpha_1$, $\hat\alpha_2$ be estimators of $\alpha_1^*$, $\alpha_2^*$. 
Take into account that 
if $n|\alpha_1^*-\alpha_2^*|^2\lto\infty$, then
$n|\hat\alpha_1-\hat\alpha_2|^2\lto\infty$ in probability,
in other words,
the probability of $n|\hat\alpha_1-\hat\alpha_2|^2<1$ converges to zero. 
According to Theorem \ref{th2_TKU},
%Theorem 1 and Corollary 1 of Tonaki et al. (2021),
%\cite{TKU2021}, 
we can choose $\epsilon_1\in(0,1)$ in \textbf{[E1]}, 
for example, 
$n^{\epsilon_1}=(n^{0.5}\land n|\hat\alpha_1-\hat\alpha_2|^2)^{0.9}$, i.e., 
$\epsilon_1=0.45\land (0.9+1.8\log_n|\hat\alpha_1-\hat\alpha_2|)$ for large $n$.
See 
Kessler (1997), Uchida and Yoshida (2012,2014),
Kamatani and Uchida (2015) and Kaino and Uchida (2018) 
for constructing an estimator $\hat\beta_k$ 
that satisfies \textbf{[E2]}. 
\end{rmk}
%%%%%
Set
\begin{align*}
\underline{\tau}_n=\hat\tau_n^\alpha-n^{-\epsilon_1},\quad
\overline{\tau}_n=\hat\tau_n^\alpha+n^{-\epsilon_1}.
\end{align*}

We consider $r=d$.
Let
\begin{align*}
\check\xi_{k,i}=1_d^\TT a^{-1}(\Xs,\hat\alpha_k)(\DeX-h b(\Xs,\hat\beta_k))
\end{align*}
for $k=1,2$.
The test statistics for the drift parameter are as follows.
\begin{align*}
\TUO
&=
\frac{1}{\sqrt{d\underline{\tau}_n T}}
\max_{1\le k\le [n\underline{\tau}_n]}
\left|
\sum_{i=1}^k\check\xi_{1,i}
-\frac{k}{[n\underline{\tau}_n]}\sum_{i=1}^{[n\underline{\tau}_n]}\check\xi_{1,i}
\right|,\\
\TOO
&=
\frac{1}{\sqrt{d(1-\overline{\tau}_n) T}}
\max_{1\le k\le n-[n\overline{\tau}_n]}
\left|
\sum_{i=[n\overline{\tau}_n]+1}^{[n\overline{\tau}_n]+k}\check\xi_{2,i}
-\frac{k}{n-[n\overline{\tau}_n]}
\sum_{i=[n\overline{\tau}_n]+1}^{n}\check\xi_{2,i}
\right|.
\end{align*}
%For $k=1,2, \ldots$,  
%let $\{\boldsymbol B_k^0(s)\}_{0\le s\le 1}$ denote
%a $k$-dimensional Brownian bridge.

%%%%%%%%%%%%%%%%%%%%
\begin{thm}\label{th1}
Suppose that \textbf{[C1]}-\textbf{[C6]}, 
\textbf{[D6]}, \textbf{[E1]} and \textbf{[E2]} 
%\textbf{[E1]}-\textbf{[E3]} 
hold.
Then, 
\begin{align}
\TUO
&\dto
\sup_{0\le s\le 1}|\boldsymbol B_1^0(s)|
\ 
\text{ under } H_0^{(1)},
\label{eq-th1-1}
\\
\TOO
&\dto
\sup_{0\le s\le 1}|\boldsymbol B_1^0(s)|
\ 
\text{ under } H_0^{(2)}.
\label{eq-th1-2}
\end{align}

\end{thm}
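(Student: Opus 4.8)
The plan is to reduce $\TUO$ to a CUSUM statistic formed from the i.i.d.\ Gaussian variables $\zeta_i=1_d^\TT\Delta W_i$ and then apply Donsker's invariance principle together with the continuous mapping theorem. I describe $\TUO$ under $H_0^{(1)}$ in detail; the analysis of $\TOO$ under $H_0^{(2)}$ is the mirror image, with the summation running from the right end and $(\alpha_1^*,\beta_1,\underline{\tau}_n)$ replaced by $(\alpha_2^*,\beta_2,\overline{\tau}_n)$.

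The first step is localization via \textbf{[E1]}. Because $\underline{\tau}_n=\hat\tau_n^\alpha-n^{-\epsilon_1}$ and $n^{\epsilon_1}(\hat\tau_n^\alpha-\tau_*^\alpha)=o_p(1)$, there is an event $\Omega_n$ with $P(\Omega_n)\to1$ on which $0<\underline{\tau}_n<\tau_*^\alpha$ and $\underline{\tau}_n\pto\tau_*^\alpha$; hence every index $i\le[n\underline{\tau}_n]$ belongs to the first regime, where the diffusion coefficient is $a(\cdot,\alpha_1^*)$ and, under $H_0^{(1)}$, the drift is $b(\cdot,\beta_1)$ with $\beta_1$ fixed. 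On $\Omega_n$ I would substitute the corresponding form of $\DeX$ into $\check\xi_{1,i}$ and split it into a stochastic part $1_d^\TT a^{-1}(\Xs,\hat\alpha_1)\int_{t_{i-1}}^{t_i}a(X_s,\alpha_1^*)\dd W_s$ and a drift part $1_d^\TT a^{-1}(\Xs,\hat\alpha_1)(\int_{t_{i-1}}^{t_i}b(X_s,\beta_1)\dd s-hb(\Xs,\hat\beta_1))$. Using $\hat\alpha_1\to\alpha_1^*$, the Euler approximation $\int_{t_{i-1}}^{t_i}a(X_s,\alpha_1^*)\dd W_s\approx a(\Xs,\alpha_1^*)\Delta W_i$, and the identity $a^{-1}(\Xs,\alpha_1^*)a(\Xs,\alpha_1^*)=I_d$ (here $r=d$), the leading contribution is exactly $\zeta_i=1_d^\TT\Delta W_i$, and I would write $\check\xi_{1,i}=\zeta_i+\rho_i$ with $\rho_i$ collecting all replacement errors.

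Since the $\Delta W_i$ are i.i.d.\ $N(0,hI_d)$, the $\zeta_i$ are i.i.d.\ $N(0,dh)$, so $\sum_{i=1}^{[n\underline{\tau}_n s]}\zeta_i=1_d^\TT W_{t_{[n\underline{\tau}_n s]}}$; dividing by $\sqrt{d\underline{\tau}_n T}$ and subtracting the sample-mean centering, Donsker's theorem together with $\underline{\tau}_n\pto\tau_*^\alpha$ gives weak convergence of the centered partial-sum process in $s$ to the Brownian bridge $\boldsymbol B_1^0$, and the continuous mapping theorem applied to $\sup_{0\le s\le1}|\cdot|$ yields the stated limit $\sup_{0\le s\le1}|\boldsymbol B_1^0(s)|$ for the main part.

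It remains to show that the remainder CUSUM $\frac{1}{\sqrt{d\underline{\tau}_n T}}\max_{k\le[n\underline{\tau}_n]}|\sum_{i=1}^k\rho_i-\frac{k}{[n\underline{\tau}_n]}\sum_{i=1}^{[n\underline{\tau}_n]}\rho_i|$ tends to zero in probability, and this is where the main work lies. The Euler-approximation error in the stochastic part and the discretization error of the drift integral are martingale sums whose quadratic variations are of order $nh^2$, hence $o_p(1)$ by the assumption $nh^2\to0$; the $\hat\alpha_1$-replacement error in the stochastic part is a martingale of order $O_p(\sqrt T)$ carrying the prefactor $\hat\alpha_1-\alpha_1^*=O_p(n^{-1/2})$ from \textbf{[D6]}, hence also negligible after normalization. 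The delicate remainder is the drift bias $h\,1_d^\TT a^{-1}(\Xs,\hat\alpha_1)(b(\Xs,\beta_1)-b(\Xs,\hat\beta_1))$: since \textbf{[E2]} only supplies $\hat\beta_1-\beta_1=O_p(T^{-1/2})$, its partial sum up to $[n\underline{\tau}_n s]$ is of the critical order $O_p(\sqrt T)$ and does not vanish on its own. The resolution is that this partial sum equals $(\beta_1-\hat\beta_1)$ times an additive functional $\sum_{i\le[n\underline{\tau}_n s]}h\,c(\Xs)$ which, by the ergodic-type convergence \textbf{[C6]}, is asymptotically linear in $s$; the CUSUM centering removes exactly this linear-in-$s$ leading term, leaving only the ergodic fluctuation of order $O_p(\sqrt T)$, so that after division by $\sqrt{d\underline{\tau}_n T}\asymp\sqrt T$ the contribution is $o_p(1)$. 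Securing this cancellation uniformly in $s$, and controlling the random summation limit $[n\underline{\tau}_n]$ and normalization through \textbf{[E1]}, is the crux; the remaining estimates reduce to routine moment bounds for functionals of ergodic diffusions under \textbf{[C1]}--\textbf{[C6]}.
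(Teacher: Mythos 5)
Your proposal is sound in outline and reaches the right limit, but by a genuinely different route from the paper. The paper never works with the random endpoint $[n\underline{\tau}_n]$ directly: on the event $D_n=\{n^{\epsilon_1}|\hat\tau_n^\alpha-\tau_*^\alpha|\le1\}$ it sandwiches $\TLA\le\TUO\le\TUA$, where the maxima in $\TLA$ and $\TUA$ run to the \emph{deterministic} indices $m_n=[n(\tau_*^\alpha-2n^{-\epsilon_1})]$ and $[n\tau_*^\alpha]$, proves each bound converges to $\sup_{0\le s\le1}|\boldsymbol B_1^0(s)|$, and concludes by a $\varliminf$/$\varlimsup$ argument; the reduction of each sandwich statistic to a fixed-endpoint CUSUM is done with a Burkholder-type maximal bound (Lemma \ref{lem1}) applied to the martingale differences $\mathcal M_{1,i}=\xi_{1,i}-\EE_{\theta_1}[\xi_{1,i}|\GG]$ (not to raw Wiener increments), and the fixed-endpoint CUSUM limit itself is not re-proved but quoted from Theorem 2 of Tonaki et al.\ (2020). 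You instead keep the random endpoint, reduce all the way to the exact Gaussian increments $1_d^\TT\Delta W_i$, and dispose of the endpoint by a random time change; this is viable (joint convergence of the partial-sum process together with $\underline{\tau}_n\pto\tau_*^\alpha$, then continuous mapping), and it effectively inlines the content of the cited Theorem 2: in particular, your identification of the drift-bias term $h\,1_d^\TT a^{-1}(\Xs,\hat\alpha_1)(b(\Xs,\beta_1)-b(\Xs,\hat\beta_1))$ as the critical $O_p(\sqrt T)$ contribution that survives only through its ergodic fluctuation after CUSUM centering, controlled by \textbf{[C6]}, is exactly the mechanism the paper delegates to the earlier work. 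What the sandwich buys the paper is that it can reuse the fixed-interval result as a black box and never needs a functional limit theorem along a random index; what your route buys is self-containedness and, for \eqref{eq-th1-2}, a simplification you did not even advertise: the paper's proof of the $\TOO$ case is \emph{not} a mirror image of the first, because replacing the random starting index $[n\overline{\tau}_n]+1$ by a deterministic one creates sliding-window sums $\sum_{i=[n\overline{\tau}_n]+k+1}^{M_n+k}\check\xi_{2,i}$ whose maximum requires a union bound with $2r$-th moments (Hall and Heyde, Theorem 2.11, with $r>(2-\epsilon_1)/\epsilon_1$), whereas keeping the random endpoints and re-anchoring all sums at the fixed index $[n\tau_*^\alpha]$ avoids this entirely. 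To complete your sketch you would need to (i) write out the time-change/continuous-mapping step, (ii) treat $\TOO$ by the re-anchoring just described rather than by bare symmetry, and (iii) note that \textbf{[C6]} controls the ergodic fluctuation only for $k\ge[n^{1/\delta}]$, so the maximum over small $k$ needs a separate crude moment bound using \textbf{[C2]} --- all fillable, none a conceptual obstacle.
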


%%%%%%%%%%%%%%%%%%%%

$\TUO$ and $\TOO$ are simple test statistics, 
but for the 1-dimensional Ornstein-Uhlenbeck process defined by
$\dd X_t=-\beta(X_t-\gamma)\dd t+\alpha\dd W_t$ 
($\alpha,\beta>0$, $\gamma\in\mathbb R$), 
if $\beta$ changes and $\gamma$ does not change, 
these tests do not satisfy the conditions for the consistency to hold 
as well as the test statistic proposed by Tonaki et al. (2020). 
Therefore, we consider other test statistics.
Let 
\begin{align*}
\check\zeta_{k,i}
&=\partial_\beta b(\Xs,\hat\beta_k)^\TT 
A^{-1}(\Xs,\hat\alpha_k)\bigl(\DeX-hb(\Xs,\hat\beta_k)\bigr),
\\
\mathcal I_{1,n}
&=\frac1{[n\underline{\tau}_n]}\sum_{i=1}^{[n\underline{\tau}_n]}
\partial_\beta b(\Xs,\hat\beta_1)^\TT 
A^{-1}(\Xs,\hat\alpha_1)
\partial_\beta b(\Xs,\hat\beta_1),
\\
\mathcal I_{2,n}
&=\frac1{n-[n\overline{\tau}_n]}\sum_{i=[n\overline{\tau}_n]+1}^n
\partial_\beta b(\Xs,\hat\beta_2)^\TT 
A^{-1}(\Xs,\hat\alpha_2)
\partial_\beta b(\Xs,\hat\beta_2)
\end{align*}
for $k=1,2$. The test statistics for the drift parameter are as follows.
\begin{align*}
\TUT
&=
\frac{1}{\sqrt{\underline{\tau}_n T}}
\max_{1\le k\le [n\underline{\tau}_n]}
\left\|
\mathcal I_{1,n}^{-1/2}
\left(
\sum_{i=1}^k\check\zeta_{1,i}
-\frac{k}{[n\underline{\tau}_n]}\sum_{i=1}^{[n\underline{\tau}_n]}\check\zeta_{1,i}
\right)
\right\|,\\
\TOT
&=
\frac{1}{\sqrt{(1-\overline{\tau}_n)T}}
\max_{1\le k\le n-[n\overline{\tau}_n]}
\left\|
\mathcal I_{2,n}^{-1/2}
\left(
\sum_{i=[n\overline{\tau}_n]+1}^{[n\overline{\tau}_n]+k}\check\zeta_{2,i}
-\frac{k}{n-[n\overline{\tau}_n]}
\sum_{i=[n\overline{\tau}_n]+1}^{n}\check\zeta_{2,i}
\right)
\right\|.
\end{align*}
%%%%%
We additionally make the following assumptions 
with respect to the smoothness of the drift coefficient $b$.
\begin{enumerate}
\renewcommand{\labelenumi}{{\textbf{[E\arabic{enumi}]}}}
\setcounter{enumi}{2}
\item There exists an integer $\mma\ge 3$ such that
$nh^{\mma/(\mma-1)}\lto\infty$ and 
$b\in C^{4,\mma+1}_{\uparrow}(\mathbb R^d\times\Theta_B)$.
\item 
There exists an integer $\mmb \ge3$ such that 
$b\in C_{\uparrow}^{4,\mmb+1}(\mathbb R^d\times\Theta_B)$ and
$\partial_{\beta^{\ell_{\mmb+1}}}\cdots\partial_{\beta^{\ell_1}}b(x,\beta)=0$ for 
$1\le \ell_1,\ldots,\ell_{\mmb+1}\le q$.

\end{enumerate}

%%%%%%%%%%%%%%%%%%%%
\begin{thm}\label{th2}
Suppose that \textbf{[C1]}-\textbf{[C6]}, 
\textbf{[D6]}, 
\textbf{[E1]}-\textbf{[E3]} 
%\textbf{[E1]}-\textbf{[E4]} 
hold.
Then,
\begin{align}
\TUT
&\dto\sup_{0\le s\le 1}\|\boldsymbol B_q^0(s)\|
\ 
\text{ under } H_0^{(1)},
\label{eq-th1-3}
\\
\TOT
&\dto\sup_{0\le s\le 1}\|\boldsymbol B_q^0(s)\|
\ 
\text{ under } H_0^{(2)}.
\label{eq-th1-4}
\end{align}
Furthermore, \eqref{eq-th1-3} and \eqref{eq-th1-4} 
still hold even if we replace \textbf{[E3]} with \textbf{[E4]}.
%\textbf{[E4]} with \textbf{[E5]}.
\end{thm}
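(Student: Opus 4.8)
The plan is to prove \eqref{eq-th1-3} by reducing $\TUT$ to a continuous functional of a centered partial-sum process of the score increments $\check\zeta_{1,i}$ and then invoking a functional martingale central limit theorem; the statement \eqref{eq-th1-4} for $\TOT$ is entirely symmetric, replacing the index block $\{1,\dots,[n\underline{\tau}_n]\}$ by $\{[n\overline{\tau}_n]+1,\dots,n\}$ and the first regime by the second. First I would use \textbf{[E1]} to localise: since $n^{\epsilon_1}(\hat\tau_n^\alpha-\tau_*^\alpha)=o_p(1)$, we have $\underline{\tau}_n<\tau_*^\alpha$ with probability tending to $1$, so on the range $1\le i\le[n\underline{\tau}_n]$ the data obey the first-regime dynamics with constant coefficients $a(\cdot,\alpha_1^*)$ and, under $H_0^{(1)}$, $b(\cdot,\beta_1)$. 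The loss of the $O(n^{-\epsilon_1})$ fraction of indices in $(\underline{\tau}_n,\tau_*^\alpha)$ is asymptotically negligible, so it suffices to treat the block $1\le i\le[n\underline{\tau}_n]$ as a genuine ergodic diffusion sample with the single parameter $\theta_1=(\alpha_1^*,\beta_1)$.

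Next I would insert the It\^o--Taylor expansion $\DeX=hb(\Xs,\beta_1)+a(\Xs,\alpha_1^*)\Delta W_i+R_i$ into $\check\zeta_{1,i}$ and split it into three pieces: (i) the leading martingale increment $\mathcal M_i=\partial_\beta b(\Xs,\beta_1)^\TT A^{-1}(\Xs,\alpha_1^*)a(\Xs,\alpha_1^*)\Delta W_i$; (ii) the drift-correction increment $\mathcal D_i=h\,\partial_\beta b^\TT A^{-1}\bigl(b(\Xs,\beta_1)-b(\Xs,\hat\beta_1)\bigr)$; and (iii) remainders coming from $R_i$ and from replacing $\hat\alpha_1$ by $\alpha_1^*$. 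Using \textbf{[D6]} ($\sqrt n(\hat\alpha_1-\alpha_1^*)=O_p(1)$), \textbf{[C1]}--\textbf{[C4]}, the moment bound \textbf{[C2]}, and $nh^2\lto0$, I would show by standard remainder bookkeeping (terms of type $R_{i-1}(h_n^j,\theta)$) that the contribution of (iii) to $\frac1{\sqrt{\underline{\tau}_n T}}\sum_{i\le k}(\cdot)$ is $o_p(1)$ uniformly in $k$; here $\hat\alpha_1$ converges faster than $\hat\beta_1$, so its replacement is harmless.

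The heart of the argument is the interplay between (i), (ii) and the explicit centering. Writing $S_k=\sum_{i=1}^k\mathcal M_i$, a functional martingale CLT---with the Lindeberg condition supplied by \textbf{[C2]} and the normalised quadratic variation $\frac1{\underline{\tau}_n T}h\sum_{i\le[n\underline{\tau}_n s]}\partial_\beta b^\TT A^{-1}\partial_\beta b\pto s\,\mathcal I_1$ by the ergodic theorem \textbf{[C5]}, where $\mathcal I_1=\int_{\mathbb R^d}\partial_\beta b(x,\beta_1)^\TT A^{-1}(x,\alpha_1^*)\partial_\beta b(x,\beta_1)\dd\mu_{\theta_1}(x)$---gives $\frac1{\sqrt{\underline{\tau}_n T}}\mathcal I_1^{-1/2}S_{[n\underline{\tau}_n s]}\dto\boldsymbol B_q(s)$ in $D[0,1]$. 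For (ii), a Taylor expansion in $\beta$ with \textbf{[E2]} and the ergodic theorem gives $\sum_{i=1}^k\mathcal D_i=-h\bigl(k\,\mathcal I_1+o_p(k)\bigr)(\hat\beta_1-\beta_1)$, whose leading part is \emph{linear} in $k$; the centering $\sum_{i\le k}-\tfrac{k}{[n\underline{\tau}_n]}\sum_{i\le[n\underline{\tau}_n]}$ annihilates any term linear in $k$. Hence the centered process equals $S_k-\tfrac{k}{[n\underline{\tau}_n]}S_{[n\underline{\tau}_n]}+o_p(\sqrt T)$ uniformly, whose normalisation converges to $\boldsymbol B_q(s)-s\boldsymbol B_q(1)=\boldsymbol B_q^0(s)$. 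Replacing $\mathcal I_{1,n}^{-1/2}$ by $\mathcal I_1^{-1/2}$ (consistent by ergodicity with \textbf{[E2]}, \textbf{[D6]}) via Slutsky and applying the continuous mapping theorem to $f\mapsto\sup_{0\le s\le1}\|f(s)\|$ yields \eqref{eq-th1-3}.

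The main obstacle is the \emph{uniform in $k$} control of the higher-order terms of the $\beta$-expansion of $\mathcal D_i$. Expanding $b(\Xs,\beta_1)-b(\Xs,\hat\beta_1)$ in $\beta$ and estimating the highest-order Taylor remainder via the $C^{4,\mma+1}_{\uparrow}$ bound together with $\hat\beta_1-\beta_1=O_p(T^{-1/2})$, each term is again asymptotically linear in $k$ and hence killed by the centering, leaving only fluctuations that must be shown $o_p(\sqrt T)$ simultaneously for all $k$ by a maximal inequality. The smoothness $b\in C^{4,\mma+1}_{\uparrow}$ and the rate $nh^{\mma/(\mma-1)}\lto\infty$ in \textbf{[E3]} are precisely what render the top-order remainder negligible in this maximal sense. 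Under \textbf{[E4]} the $\beta$-expansion of $b$ terminates exactly at order $\mmb$, so there is no remainder to control and the rate condition becomes superfluous; this is why \eqref{eq-th1-3} and \eqref{eq-th1-4} persist with \textbf{[E3]} replaced by \textbf{[E4]}.
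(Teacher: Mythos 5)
Your high-level mechanism (extract martingale increments, let the centering annihilate terms linear in $k$, control the rest uniformly in $k$ by maximal inequalities, finish with a functional CLT plus Slutsky and continuous mapping, with \textbf{[E4]} removing the rate condition because the $\beta$-expansion terminates) is the same mechanism that drives the paper's proof. However, your decomposition (i)--(iii) omits the one term whose control is the actual content of the theorem, and this is a genuine gap. Since
\begin{align*}
\check\zeta_{1,i}=\partial_\beta b(\Xs,\hat\beta_1)^\TT A^{-1}(\Xs,\hat\alpha_1)\bigl(\DeX-hb(\Xs,\hat\beta_1)\bigr),
\end{align*}
the weight $\partial_\beta b$ is evaluated at $\hat\beta_1$; writing $\Delta W_i=W_{t_i}-W_{t_{i-1}}$, the term
\begin{align*}
\bigl(\partial_\beta b(\Xs,\hat\beta_1)-\partial_\beta b(\Xs,\beta_1)\bigr)^\TT A^{-1}(\Xs,\alpha_1^*)\,a(\Xs,\alpha_1^*)\,\Delta W_i
\end{align*}
belongs to none of your pieces: (i) carries the weight at $\beta_1$, (ii) is the drift correction, and (iii) covers only $R_i$ and the replacement of $\hat\alpha_1$. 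This term is not a martingale increment ($\hat\beta_1$ depends on the whole sample), and ``standard remainder bookkeeping'' fails on it: each increment is $O_p(|\hat\beta_1-\beta_1|\sqrt h)=O_p(\sqrt{h/T})$, so a crude bound on the sum over $k\sim n$ increments gives $O_p(n\sqrt{h/T})=O_p(\sqrt n)=O_p(h^{-1/2}\sqrt T)$, vastly larger than the required $o_p(\sqrt T)$. The paper's proof is organized around exactly this term: expand the weight in $\beta$ to order $\mma-1$, pull the factors $(\sqrt T(\hat\beta_1-\beta_1))^{\otimes j}$ out of the sums so that genuine martingale sums $\sum_i\mathcal N_{1,i}^{[j]}$ remain (handled order by order via Lemma \ref{lem1}/Burkholder), and bound the order-$\mma$ Taylor remainder crudely by $O_p(n\sqrt h/T^{\mma/2})$ uniformly in $k$, which is $o_p(\sqrt T)$ precisely when $nh^{\mma/(\mma-1)}\to\infty$. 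That is where \textbf{[E3]} does its work, and why \textbf{[E4]} makes the rate condition superfluous. Your proposal instead attaches the expansion and the rate condition to $\mathcal D_i$, i.e.\ to $b(\Xs,\beta_1)-b(\Xs,\hat\beta_1)$; but that remainder is $O_p(hT^{-\mma/2})$ per increment and is negligible for every $\mma\ge2$ with no rate condition at all, so in your argument \textbf{[E3]} never actually gets used --- a symptom of the decisive term having been lost.

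The second gap is the treatment of the randomness of $\hat\tau_n^\alpha$. In $\TUT$ the range of the maximum, the centering endpoint $[n\underline{\tau}_n]$ and the normalization $\sqrt{\underline{\tau}_n T}$ are all data dependent, so partial sums up to $[n\underline{\tau}_n]$ are not martingale sums, the functional MCLT cannot be applied to $s\mapsto S_{[n\underline{\tau}_n s]}$ as written, and the block ergodic limits require \textbf{[C6]} rather than \textbf{[C5]}. The paper resolves this by sandwiching $\TLC\le\TUT\le\TUC$ on $D_n=\{n^{\epsilon_1}|\hat\tau_n^\alpha-\tau_*^\alpha|\le1\}$, where the maxima run over the deterministic ranges $m_n$ and $[n\tau_*^\alpha]$, and by using Lemma \ref{lem1} to show the residual random-endpoint sums are uniformly negligible; your ``loss of a fraction of indices is negligible'' addresses regime identification, not this. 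The gap bites hardest in your claim that \eqref{eq-th1-4} is ``entirely symmetric'': for the second block both endpoints of $\sum_{i=[n\overline{\tau}_n]+1}^{[n\overline{\tau}_n]+k}\check\zeta_{2,i}$ move with $\overline{\tau}_n$ and with $k$, and comparison with deterministic endpoints produces the sliding-window maxima $\max_{k}\|\sum_{i=[n\overline{\tau}_n]+k+1}^{M_n+k}\check\zeta_{2,i}\|$, which a single Burkholder-type inequality cannot control; the paper needs union bounds with $2r$-th moments (Theorem 2.11 of Hall and Heyde (1980)) and the choice $r>(2-\epsilon_1)/\epsilon_1$. A process-level FCLT combined with a random time-change argument could restore the symmetry you invoke, but you do not supply it, so as it stands the $\TOT$ half of the statement is not proved.
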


%%%%%%%%%%

\begin{rmk}
When the drift parameter changes at the same point as the diffusion parameter, 
these tests are unable to detect changes in the drift parameter. 
In other words, 
even if the null hypotheses $H_0^{(1)}$ and $H_0^{(2)}$ are not rejected, 
it is possible that
%we cannot deny the possibility that 
the drift parameter changes 
at the same point as the diffusion parameter.
See Subsection \ref{sec2-3} for the case where 
neither $H_0^{(1)}$ nor $H_0^{(2)}$ is rejected.
\end{rmk}

%%%%%%%%%%%%%%%%%%%%
\subsection{Consistency of tests}\label{sec2-2}
In this subsection, we consider the consistency of the proposed tests. 
For simplicity, we assume that there is a change point 
under the alternative hypothesis, that is, 
we consider the following two hypothesis testing problems.

$H_0^{(1)}: \beta_1$ does not change over $[0, \tau_*^\alpha T]$
\ v.s.\   
$H_1^{(1)}:$ There exists $\tau_*^\beta\in(0,\tau_*^\alpha)$ such that 
\begin{align*}
\beta_1
=
\begin{cases}
\beta_{1,1}^*, &t\in[0,\tau_*^\beta T),\\
\beta_{1,2}^*, &t\in[\tau_*^\beta T,\tau_*^\alpha T],
\end{cases}
\end{align*}
%where $\beta_1^*\neq\beta_2^*$.

$H_0^{(2)}: \beta_2$ does not change over $[\tau_*^\alpha T,T]$
\ v.s.\   
$H_1^{(2)}:$ There exists $\tau_*^\beta\in(\tau_*^\alpha,1)$ such that 
\begin{align*}
\beta_2
=
\begin{cases}
\beta_{2,1}^*, &t\in[\tau_*^\alpha T,\tau_*^\beta T),\\
\beta_{2,2}^*, &t\in[\tau_*^\beta T,T],
\end{cases}
\end{align*}
where $\beta_{k,1}^*,\beta_{k,2}^*\in\mathrm{Int}\,\Theta_B$, 
$\beta_{k,1}^*\neq\beta_{k,2}^*$ for $k=1,2$.

%%%%%%%%
For $k=1,2$, we make the following assumptions.
\begin{enumerate}
\renewcommand{\labelenumi}{\textbf{[F\arabic{enumi}]}}
\item 
%For $k=1,2$, 
There exist $\beta_k'\in\Theta_B$ and 
an estimator $\hat\beta_k$
such that  
$\hat\beta_k-\beta_k'=o_p(1)$ under $H_1^{(k)}$.

\item 
%For $k=1,2$, 
There exist $\beta_k'\in\Theta_B$ and 
an estimator $\hat\beta_k$
such that  
$\sqrt{T} (\hat\beta_k-\beta_k')=O_p(1)$ under $H_1^{(k)}$.

\end{enumerate}

Let $\alpha_k^*\lto\alpha_k^{(0)}\in\mathrm{Int}\,\Theta_A$ as $n\to\infty$,
where we note that $\alpha_k^*$ may depend on $n$. 
Let
\begin{align*}
\mathcal G_{k,\ell}
&=\int_{\mathbb R^d}1_d^\TT a^{-1}(x,\alpha_k^{(0)})
\bigl(
b(x,\beta_{k,\ell}^*)-b(x,\beta_k')
\bigr)
\dd\mu_{(\alpha_k^{(0)},\beta_{k,\ell}^*)}(x),\\
\mathcal H_{k,\ell}
&=\int_{\mathbb R^d}\partial_\beta b(x,\beta_k')^\TT A^{-1}(x,\alpha_k^{(0)})
\bigl(b(x,\beta_{k,\ell}^*)-b(x,\beta_k')\bigr)
\dd\mu_{(\alpha_k^{(0)},\beta_{k,\ell}^*)}(x).
\end{align*}

%%%%%
\begin{enumerate}
\renewcommand{\labelenumi}{{\textbf{[F\arabic{enumi}]}}}
\setcounter{enumi}{2}

\item 
%For $k=1,2$, 
$\mathcal G_{k,1}\neq\mathcal G_{k,2}$ 
under $H_1^{(k)}$.

\item 
%For $k=1,2$, 
$\mathcal H_{k,1}\neq\mathcal H_{k,2}$ 
under $H_1^{(k)}$. 

\end{enumerate}

\begin{enumerate}
\renewcommand{\labelenumi}{\textbf{[G\arabic{enumi}]}}

\item 
$\Dek=|\beta_{k,1}^*-\beta_{k,2}^*|$
depends on $n$, and $\Dek\lto0$, $T\Dek^2\lto\infty$
as $n\to\infty$ under $H_1^{(k)}$.

\item
There exists $\beta_k^{(0)}\in\mathrm{Int}\,\Theta_B$ such that
$\Dek^{-1}(\beta_{k,\ell}^*-\beta_k^{(0)})\lto d_{k,\ell}\in\mathbb R^q$
as $n\to\infty$ for $\ell=1,2$.

\item 
There exist 
$\beta_k'$ with $\beta_k'-\beta_k^{(0)}=o(1)$ and 
an estimator $\hat\beta_k$
such that  
$\sqrt T(\hat\beta_k-\beta_k')=O_p(1)$ under $H_1^{(k)}$.

\item 
$\displaystyle
\int_{\mathbb R^d}
1_d^\TT a^{-1}(x,\alpha_k^{(0)})\partial_\beta b(x,\beta_k^{(0)})
\dd\mu_{(\alpha_k^{(0)},\beta_k^{(0)})}(x)(d_{k,1}-d_{k,2})
\neq0$
under $H_1^{(k)}$.

\item 
%For $k=1,2$,
For $\epsilon_1\in(0,1)$ in \textbf{[E1]},
$n^{\epsilon_1}\Dek\lto\infty$.

\item
%For $k=1,2$,
There exists an integer $\mmc\ge3$ such that 
$n^{-\mmc}h^{-(\mmc+1)}=O(1)$, 
$h^{-1/2}\Dek^{\mmc}\lto0$ 
and $b\in C_{\uparrow}^{4,\mmc+1}(\mathbb R^d\times\Theta_B)$.

\end{enumerate}
\begin{rmk}
See Uchida and Yoshida (2011) and Tonaki et al. (2020,2021) 
for constructing an estimator $\hat\beta_k$ 
that satisfies \textbf{[F1]}, \textbf{[F2]} or \textbf{[G3]}. 
\end{rmk}

%%%%%
\begin{thm}\label{th3}
Suppose that \textbf{[C1]}-\textbf{[C5]}, \textbf{[D6]} and \textbf{[E1]} hold.
If any one of the following conditions is satisfied
\begin{enumerate}
\renewcommand{\labelenumi}{(\alph{enumi})}
\item
\textbf{[F1]} and \textbf{[F3]},
\item  
\textbf{[G1]}-\textbf{[G4]},
\end{enumerate}
then for $\epsilon\in(0,1)$, 
$P\bigl(\TUO>w_1(\epsilon)\bigr)\lto1$ under $H_1^{(1)}$,
and
$P\bigl(\TOO>w_1(\epsilon)\bigr)\lto1$ under $H_1^{(2)}$.
\end{thm}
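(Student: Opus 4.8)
The plan is to establish a divergent lower bound for $\TUO$ under $H_1^{(1)}$; the argument for $\TOO$ under $H_1^{(2)}$ is identical by symmetry. Write $m=[n\underline{\tau}_n]$ and $m_\beta=[n\tau_*^\beta]$. Since $\tau_*^\beta<\tau_*^\alpha$ and \textbf{[E1]} gives $\underline{\tau}_n\pto\tau_*^\alpha$, we have $P(m_\beta<m)\to1$, so on an event of probability tending to one the true drift change point $m_\beta$ lies strictly inside the summation range $1\le k\le m$. Because the maximum over $k$ dominates the single term $k=m_\beta$, it suffices to bound
\[
\frac{1}{\sqrt{d\underline{\tau}_n T}}\left|\sum_{i=1}^{m_\beta}\check\xi_{1,i}-\frac{m_\beta}{m}\sum_{i=1}^{m}\check\xi_{1,i}\right|
\]
from below and to show that it tends to $+\infty$ in probability.

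First I would decompose $\check\xi_{1,i}=D_i+M_i+R_i$ via the representation $\DeX=\int_{t_{i-1}}^{t_i}b(X_s,\beta)\dd s+\int_{t_{i-1}}^{t_i}a(X_s,\alpha_1^*)\dd W_s$, where $\beta=\beta_{1,1}^*$ for $i\le m_\beta$ and $\beta=\beta_{1,2}^*$ for $m_\beta<i\le m$, with drift part $D_i=h\,1_d^\TT a^{-1}(\Xs,\hat\alpha_1)\bigl(b(\Xs,\beta)-b(\Xs,\hat\beta_1)\bigr)$, martingale part $M_i=1_d^\TT a^{-1}(\Xs,\hat\alpha_1)a(\Xs,\alpha_1^*)\Delta W_i$, and an It\^o--Taylor remainder $R_i$. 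Using \textbf{[D6]} ($\hat\alpha_1-\alpha_1^*=O_p(n^{-1/2})$), the consistency $\hat\beta_1\pto\beta_1'$ from \textbf{[F1]} or \textbf{[G3]}, and the ergodic theorem \textbf{[C5]} applied to each regime block, the drift part obeys $\tfrac{1}{hm_\beta}\sum_{i=1}^{m_\beta}D_i\pto\mathcal G_{1,1}$ and $\tfrac{1}{h(m-m_\beta)}\sum_{i=m_\beta+1}^{m}D_i\pto\mathcal G_{1,2}$. Substituting into the centered sum at $k=m_\beta$ yields, to leading order, the deterministic quantity
\[
h\,\frac{m_\beta(m-m_\beta)}{m}\bigl(\mathcal G_{1,1}-\mathcal G_{1,2}\bigr)\sim T\,\frac{\tau_*^\beta(\tau_*^\alpha-\tau_*^\beta)}{\tau_*^\alpha}\bigl(\mathcal G_{1,1}-\mathcal G_{1,2}\bigr),
\]
which is of exact order $T$.

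For the martingale part, Doob's maximal inequality together with the quadratic-variation bound $\langle\cdot\rangle_k=O_p(kh)$ (as in the null analysis underlying Theorem \ref{th1}) shows that the centered sum of $M_i$ is $O_p(\sqrt T)$ uniformly in $k$, while a routine $L^2$ estimate makes $\sum_i R_i$ of lower order; after division by $\sqrt{d\underline{\tau}_n T}\sim\sqrt T$ both are $O_p(1)$. Hence the normalized statistic is bounded below by a quantity of order $T/\sqrt T=\sqrt T$ times a nonzero factor. In case (a) this factor is $\mathcal G_{1,1}-\mathcal G_{1,2}\neq0$ by \textbf{[F3]}, so $\TUO\pto+\infty$ and $P(\TUO>w_1(\epsilon))\to1$. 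In case (b), where $\Dek=|\beta_{1,1}^*-\beta_{1,2}^*|\to0$, I would Taylor-expand $b(x,\beta_{1,1}^*)-b(x,\beta_{1,2}^*)$ to first order using the $C^1$-in-$\beta$ smoothness from \textbf{[C4]} and the normalization \textbf{[G2]}, so that $\mathcal G_{1,1}-\mathcal G_{1,2}$ equals $\Dek$ times the nondegenerate integral in \textbf{[G4]} plus $o(\Dek)$; the lower bound is then of order $\sqrt T\,\Dek\to\infty$ by \textbf{[G1]}.

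The step I expect to be the main obstacle is the \emph{uniform} control of the plug-in and discretization errors in the presence of the random summation endpoint $\underline{\tau}_n=\hat\tau_n^\alpha-n^{-\epsilon_1}$: one must show that the errors from $\hat\alpha_1,\hat\beta_1$ and the remainder $\sum_i R_i$ are negligible uniformly over $k\le m$ while $m$ is itself random, for which the rate \textbf{[E1]} on $\hat\tau_n^\alpha$ is decisive. In case (b) there is the additional delicacy that the drift signal shrinks at rate $\Dek$, so these error terms must be controlled at the sharper level $o_p(\sqrt T\,\Dek)$ rather than merely $o_p(\sqrt T)$, which is precisely where the balance $T\Dek^2\to\infty$ in \textbf{[G1]} is used.
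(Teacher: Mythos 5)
Your proposal is correct and follows essentially the same route as the paper: lower-bound the max-type statistic by its value at $k=[n\tau_*^\beta]$ (valid since $[n\tau_*^\beta]\le[n\underline{\tau}_n]$ with probability tending to one by \textbf{[E1]}), show the drift ``signal'' term is of order $T(\mathcal G_{1,1}-\mathcal G_{1,2})$ (case (a)) or $T\Dek$ times the nondegenerate \textbf{[G4]} integral (case (b)) while the martingale and plug-in errors are $O_p(\sqrt T)$ uniformly in $k$, and conclude divergence at rate $\sqrt T$ resp.\ $\sqrt{T}\Dek\to\infty$ by \textbf{[F3]} resp.\ \textbf{[G1]}. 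The only difference is one of delegation: the paper cites (4.54)--(4.55) of Tonaki et al.\ (2020) and Proposition 2 of Tonaki et al.\ (2021) for the block-average convergences and the case-(b) nondegeneracy, and handles the random endpoint $[n\underline{\tau}_n]$ via its Lemma \ref{lem1}, which are exactly the computations (ergodic averages, Taylor expansion in $\Dek$, martingale maximal inequalities with \textbf{[E1]}) that you sketch directly.
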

%%%%%%%%%%
\begin{thm}\label{th4}
Suppose that \textbf{[C1]}-\textbf{[C5]}, \textbf{[D6]} and \textbf{[E1]} hold.
If any one of the following conditions is satisfied
\begin{enumerate}
\renewcommand{\labelenumi}{(\alph{enumi})}
\item
\textbf{[E4]}, \textbf{[F2]} and \textbf{[F4]},
\item
\textbf{[E5]}, \textbf{[F1]} and \textbf{[F4]},
\item
\textbf{[G1]}-\textbf{[G3]}, \textbf{[G5]} and \textbf{[G6]},
\end{enumerate}
then for $\epsilon\in(0,1)$,
$P\bigl(\TUT>w_q(\epsilon)\bigr)\lto1$ under $H_1^{(1)}$,
and 
$P\bigl(\TOT>w_q(\epsilon)\bigr)\lto1$ under $H_1^{(2)}$.
\end{thm}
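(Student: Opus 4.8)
The plan is to prove consistency by showing that each statistic diverges in probability: $\TUT\pto\infty$ under $H_1^{(1)}$ and $\TOT\pto\infty$ under $H_1^{(2)}$. Since $w_q(\epsilon)$ is a fixed constant, this immediately gives $P(\TUT>w_q(\epsilon))\to1$ and $P(\TOT>w_q(\epsilon))\to1$. By the symmetry between the two windows I would treat only $\TUT$ under $H_1^{(1)}$. Because $\TUT$ is a maximum over $k$, I would bound it below by its single term at $k_*=[n\tau_*^\beta]$, the index of the true drift change point. Here \textbf{[E1]} is used to locate the window: it gives $\underline\tau_n=\hat\tau_n^\alpha-n^{-\epsilon_1}\pto\tau_*^\alpha$ with $\underline\tau_n<\tau_*^\alpha$ on an event of probability tending to one, so that every index $i\le[n\underline\tau_n]$ lies in the first diffusion regime (parameter $\alpha_1^*$); and since $\tau_*^\beta<\tau_*^\alpha$ strictly, one has $k_*<[n\underline\tau_n]$ for large $n$, so $k_*$ is an admissible interior index and the window straddles the drift change.

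The core step is to expand $\check\zeta_{1,i}$ along the equation as
\[
\DeX-hb(\Xs,\hat\beta_1)
=h\bigl(b(\Xs,\beta)-b(\Xs,\hat\beta_1)\bigr)
+\bigl(\EE[\DeX\mid\GG]-hb(\Xs,\beta)\bigr)
+\bigl(\DeX-\EE[\DeX\mid\GG]\bigr),
\]
where $\beta=\beta_{1,1}^*$ for $i\le[n\tau_*^\beta]$ and $\beta=\beta_{1,2}^*$ afterwards, splitting the centred sum into a drift part, an It\^o--Taylor remainder, and a martingale part. Invoking the localized ergodic theorem \textbf{[C6]} with the consistency of $\hat\beta_1$ (\textbf{[F1]}/\textbf{[F2]}) and of $\hat\alpha_1$ (\textbf{[D6]}), in the fixed-change cases (a) and (b) the drift part at $k_*$, scaled by $T$, converges to $\frac{\tau_*^\beta(\tau_*^\alpha-\tau_*^\beta)}{\tau_*^\alpha}(\mathcal H_{1,1}-\mathcal H_{1,2})$, a nonzero vector by \textbf{[F4]}. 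Since $\mathcal I_{1,n}$ converges to a fixed positive definite matrix, $\mathcal I_{1,n}^{-1/2}$ is bounded below and preserves this direction; as the martingale part is $O_p(\sqrt T)$ and the normalization is of order $T^{-1/2}$, the drift term of order $T$ dominates and forces $\TUT\gtrsim\sqrt T\pto\infty$. The smoothness hypotheses \textbf{[E4]} (drift polynomial in $\beta$) and \textbf{[E5]} are what make the It\^o--Taylor remainder and the replacement of $\partial_\beta b(\Xs,\hat\beta_1)$ by its limit negligible, the former permitting the weaker rate \textbf{[F2]} and the latter compensating for the mere consistency \textbf{[F1]}.

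For the shrinking-change case (c) the vector $\mathcal H_{1,1}-\mathcal H_{1,2}$ itself tends to zero, so I would instead Taylor expand $b(\Xs,\beta_{1,\ell}^*)-b(\Xs,\hat\beta_1)$ about the common limit $\beta_1^{(0)}$, using \textbf{[G2]} to extract the leading contribution $\Dek\,\partial_\beta b(\Xs,\beta_1^{(0)})d_{1,\ell}$; the $\hat\beta_1$-dependent part is common to the two sub-regimes and cancels in the centred sum. The drift part is then of order $T\Dek$, and after applying $\mathcal I_{1,n}^{-1/2}$ its scaled limit is a nonzero multiple of $\mathcal I_1^{1/2}(d_{1,1}-d_{1,2})$ with $\mathcal I_1=\int\partial_\beta b(x,\beta_1^{(0)})^\TT A^{-1}(x,\alpha_1^{(0)})\partial_\beta b(x,\beta_1^{(0)})\dd\mu$. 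Non-degeneracy is automatic here, since $\mathcal I_1$ is positive definite and $|d_{1,1}-d_{1,2}|=1$ (because $\Dek^{-1}|\beta_{1,1}^*-\beta_{1,2}^*|=1$), which is why case (c) needs no analogue of \textbf{[F4]}. Consequently $\TUT\gtrsim\sqrt{T\Dek^2}\pto\infty$ by \textbf{[G1]}. In this case \textbf{[G6]} (the smoothness of $b$ together with $n^{-\mmc}h^{-(\mmc+1)}=O(1)$ and $h^{-1/2}\Dek^{\mmc}\to0$) makes the It\^o--Taylor and $\Dek$-Taylor remainders $o_p(\sqrt{T\Dek^2})$, while \textbf{[G5]} ($n^{\epsilon_1}\Dek\to\infty$) ensures that the localization error from using $\hat\tau_n^\alpha$ in place of $\tau_*^\alpha$ is negligible on the shrinking scale $\Dek$.

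I expect case (c) to be the main obstacle, precisely because the signal shrinks at the same order as several of the error terms: one must track carefully the orders of the It\^o--Taylor remainder, the martingale fluctuations, the $\Dek$-Taylor remainder, and the error from replacing $(\tau_*^\alpha,\alpha_k^*,\beta_k')$ by $(\hat\tau_n^\alpha,\hat\alpha_k,\hat\beta_k)$, and check via \textbf{[G1]}, \textbf{[G5]} and \textbf{[G6]} that each is $o_p(\sqrt{T\Dek^2})$ so that the leading $\Dek$-term survives normalization. Finally, the statement for $\TOT$ under $H_1^{(2)}$ follows by the same argument on the window $\{[n\overline\tau_n]+1,\dots,n\}$, with $\overline\tau_n\pto\tau_*^\alpha$ from above.
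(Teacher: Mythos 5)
Your proposal follows essentially the same route as the paper's proof: bound the maximum statistic below by its value at $k_*=[n\tau_*^\beta]$, establish laws of large numbers for the two drift sub-regimes inside the window (with limits $\mathcal H_{1,1}$, $\mathcal H_{1,2}$ and non-degeneracy from \textbf{[F4]} in cases (a)--(b), and the $\Dek$-scaled analogue in case (c), where non-degeneracy is indeed automatic because $|d_{1,1}-d_{1,2}|=1$ and the limiting information matrix is positive definite, which is exactly why no analogue of \textbf{[F4]} is assumed there), and conclude divergence at rate $\sqrt{T}$, respectively $\sqrt{T\Dek^2}$, with the $\TOT$ case handled symmetrically. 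The difference is only presentational: the paper imports the oracle-window limits (sums up to $[n\tau_*^\alpha]$) from Tonaki et al. (2020, 2021) and devotes its own argument to controlling the discrepancy between the random window $[n\underline{\tau}_n]$ and $[n\tau_*^\alpha]$ via the Taylor expansion of $\check\zeta_{1,i}$ into martingale arrays together with Lemma \ref{lem1} and Chebyshev-type bounds, whereas you sketch those laws of large numbers directly -- the same argument carried out in-house rather than by citation.
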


\begin{rmk}
(a) of Theorem \ref{th3}, (a) and (b) of Theorem \ref{th4} are the conditions 
to satisfy the consistency when the difference of change does not depend on $n$, 
i.e., Case B described in Section \ref{sec3}, 
and (b) of Theorem \ref{th3} and (c) of Theorem \ref{th4} are the conditions 
to satisfy the consistency when the difference of change depends on $n$ and shrinks, 
i.e., Case A described in Section \ref{sec3}.
\end{rmk}

%%%%%%%%%%%%%%%%%%%%
\subsection{
Change in diffusion and drift parameters at the same point
%Simultaneous change in diffusion and drift parameters
}\label{sec2-3}

Since $\TUO$ and $\TOO$ (or, $\TUT$ and $\TOT$)
%$\underline{\mathcal T}_n^\beta$ and $\overline{\mathcal T}_n^\beta$ 
are tests for the change of the drift parameter  
in $[0,\underline{\tau}_nT]$ and $[\overline{\tau}_nT,T]$, respectively, 
neither test can detect the change
when the drift parameter changes in $[\underline{\tau}_nT, \overline{\tau}_nT]$, 
i.e., $\tau_*^\beta=\tau_*^\alpha$.
Therefore, in this subsection, 
we consider how to investigate 
whether the drift parameter changes 
at the same point as the diffusion parameter.
In other words, 
we consider a method for detecting changes in the drift parameter 
for the following stochastic differential equation.
\begin{align*}
X_t
=
\begin{cases}
\displaystyle
X_0+\int_0^t b(X_s,\beta_1^*)\dd s+\int_0^t a(X_s,\alpha_1^*) \dd W_s, 
& t\in[0,\tau_*^\alpha T),\\
\displaystyle
X_{\tau_*^\alpha T}+\int_{\tau_*^\alpha T}^t b(X_s,\beta_2^*)\dd s
+\int_{\tau_*^\alpha T}^t a(X_s,\alpha_2^*) \dd W_s, & t\in[\tau_*^\alpha T,T],
\end{cases}
\end{align*}
where 
$\alpha_1^*,\alpha_2^*\in\mathrm{Int}\,\Theta_A$, 
$\beta_1^*,\beta_2^*\in\mathrm{Int}\,\Theta_B$,
$\alpha_1^*\neq\alpha_2^*$, 
and $\beta_1^*$ and $\beta_2^*$ may be equal or not.

If neither $H_0^{(1)}$ nor $H_0^{(2)}$ is rejected, 
we construct the estimators $\check\beta_1$ and $\check\beta_2$ 
for $\beta_1^*$ and $\beta_2^*$
with data from the intervals $[0,\underline{\tau}_n T]$ and $[\overline{\tau}_n T,T]$,
respectively.
Here, note that the estimators $\check\beta_1$ and $\check\beta_2$ 
can be constructed to satisfy the following.
\begin{align*}
\sqrt{T}(\check\beta_1-\beta_1^*)=O_p(1),
\quad
\sqrt{T}(\check\beta_2-\beta_2^*)=O_p(1).
\end{align*}
Then, we have
\begin{align*}%\label{eq-2-3-A}
\sqrt{T}|\beta_1^*-\beta_2^*|
\le
\sqrt{T}|\check\beta_1-\beta_1^*|
+\sqrt{T}|\check\beta_2-\beta_2^*|
+\sqrt{T}|\check\beta_1-\check\beta_2|
=O_p(1)+\sqrt{T}|\check\beta_1-\check\beta_2|
\end{align*}
and
\begin{align*}%\label{eq-2-3-B}
\sqrt{T}|\check\beta_1-\check\beta_2|
\le
\sqrt{T}|\check\beta_1-\beta_1^*|
+\sqrt{T}|\check\beta_2-\beta_2^*|
+\sqrt{T}|\beta_1^*-\beta_2^*|
=O_p(1)+\sqrt{T}|\beta_1^*-\beta_2^*|.
\end{align*}
Thus, $\sqrt{T}|\check\beta_1-\check\beta_2|=O_p(1)$ is equivalent to
$\sqrt{T}|\beta_1^*-\beta_2^*|=O(1)$.
Note that
if $\sqrt{T}|\check\beta_1-\check\beta_2|\lto\infty$, 
then $\sqrt{T}|\check\beta_1-\check\beta_2|\neq O_p(1)$,
and
if $\sqrt{T}|\beta_1^*-\beta_2^*|$ is monotone, then
$\sqrt{T}|\beta_1^*-\beta_2^*|\neq O(1)$ 
is equivalent to $\sqrt{T}|\beta_1^*-\beta_2^*|\lto\infty$.
Hence, we have the following assertions.
\begin{enumerate}
\item[(1)]
If $\sqrt{T}|\hat\beta_1-\hat\beta_2|=O_p(1)$, 
then $\sqrt{T}|\beta_1^*-\beta_2^*|= O(1)$,

\item[(2)]
If $\sqrt{T}|\check\beta_1-\check\beta_2|\lto\infty$, 
then $\sqrt{T}|\beta_1^*-\beta_2^*|\lto\infty$.
 
\end{enumerate}
This implies that 
if $\sqrt{T}|\check\beta_1-\check\beta_2|$ is sufficiently large, 
then we infer that the drift parameter changes at $\tau_*^\alpha T$.
Here we note that $\tau_*^\alpha T$ is the same point in time
at which the diffusion parameter changes.

%%%%%%%%%%

%%%%%%%%%%
\begin{rmk}
%This method can detect the change that satisfy $T|\beta_1^*-\beta_2^*|^2\lto\infty$, 
%but it cannot detect the change that satisfy $|\beta_1^*-\beta_2^*|=1/T$.
The assertion (2) implies that 
this method can detect any change in the degree that 
the proposed test statistics $\TUO$ and $\TUT$ can detect it.
In other words, the change in the drift parameter  
that satisfies the assumption \textbf{[G1]} 
can be detected by the test $\TUO$ or $\TOO$ 
if the change does not occur at the same time as the diffusion parameter, 
and can also be detected by this method if the change occurs at the same time.
%%%%%%%%%%
As we saw above, 
we can theoretically determine whether the drift parameter changes 
at the same time as the diffusion parameter 
by investigating $\sqrt{T}|\check\beta_1-\check\beta_2|$,
but it would be difficult to 
determine whether the drift parameter changes simultaneously 
with the diffusion parameter in practice.
See the numerical simulations in Section \ref{sec4}.
\end{rmk}

%%%%%%%%%%%%%%%%%%%%

\section{Change point estimation for drift parameter}\label{sec3}
In this section, 
we consider the change point estimation for the drift parameter 
when there is a change point in the diffusion parameter. 
For simplicity, 
we assume that there is a change point 
in the diffusion and drift parameters, respectively.
Namely,
there exist $\tau_*^\alpha, \tau_*^\beta\in(0,1)$ such that 
\begin{align*}
\alpha^*
=
\begin{cases}
\alpha_1^*, &t\in[0,\tau_*^\alpha T),\\
\alpha_2^*, &t\in[\tau_*^\alpha T, T],
\end{cases}\quad
\beta^*
=
\begin{cases}
\beta_1^*, &t\in[0,\tau_*^\beta T),\\
\beta_2^*, &t\in[\tau_*^\beta T, T],
\end{cases}
\end{align*}
where $\alpha_1^*,\alpha_2^*\in \mathrm{Int}\,\Theta_A$, 
$\beta_1^*,\beta_2^*\in \mathrm{Int}\,\Theta_B$,
$\alpha_1^*\neq\alpha_2^*$, $\beta_1^*\neq\beta_2^*$ and
$\tau_*^\alpha\neq\tau_*^\beta$.

If $\tau_*^\beta<\tau_*^\alpha$, 
then \eqref{sde} can be expressed as follows. 
\begin{align*}
X_t=
\begin{cases}
\displaystyle
X_0+\int_0^t b(X_s,\beta_1^*)\dd s+\int_0^t a(X_s,\alpha_1^*)\dd W_s, 
&t\in[0,\tau_*^\beta T),\\
\displaystyle
X_{\tau_*^\beta T}+\int_{\tau_*^\beta T}^t b(X_s,\beta_2^*)\dd s
+\int_{\tau_*^\beta T}^t a(X_s,\alpha_1^*)\dd W_s,
&t\in[\tau_*^\beta T,\tau_*^\alpha T),\\
\displaystyle
X_{\tau_*^\alpha T}+\int_{\tau_*^\alpha T}^t b(X_s,\beta_2^*)\dd s
+\int_{\tau_*^\alpha T}^t a(X_s,\alpha_2^*)\dd W_s,
&t\in[\tau_*^\alpha T, T].
\end{cases}
\end{align*}
On the other hand, if $\tau_*^\alpha<\tau_*^\beta$, 
then \eqref{sde} can be expressed as 
\begin{align*}
X_t=
\begin{cases}
\displaystyle
X_0+\int_0^t b(X_s,\beta_1^*)\dd s+\int_0^t a(X_s,\alpha_1^*)\dd W_s, 
&t\in[0,\tau_*^\alpha T),\\
\displaystyle
X_{\tau_*^\alpha T}+\int_{\tau_*^\alpha T}^t b(X_s,\beta_1^*)\dd s
+\int_{\tau_*^\alpha T}^t a(X_s,\alpha_2^*)\dd W_s,
&t\in[\tau_*^\alpha T,\tau_*^\beta T),\\
\displaystyle
X_{\tau_*^\beta T}+\int_{\tau_*^\beta T}^t b(X_s,\beta_2^*)\dd s
+\int_{\tau_*^\beta T}^t a(X_s,\alpha_2^*)\dd W_s,
&t\in[\tau_*^\beta T, T].
\end{cases}
\end{align*}

We consider the asymptotic properties of the proposed estimators, 
such as consistency and asymptotic distribution, in the following two cases.

\begin{description}
\item[Case A]
The parameters $\beta_1^*$ and $\beta_2^*$ depend on $n$, 

\item[Case B]
The parameters $\beta_1^*$ and $\beta_2^*$ are fixed and not depend on $n$. 
\end{description}
We make the following assumptions.
\begin{enumerate}
\renewcommand{\labelenumi}{\textbf{[H\arabic{enumi}]}}

\item
The test $\TUO$ or $\TUT$ detects
a change in the drift parameter. 

\item
The test $\TOO$ or $\TOT$ detects
a change in the drift parameter. 
 
\item 
There exist estimators 
$\hat\beta_k$ ($k=1,2$)
such that $\sqrt{T}(\hat\beta_k-\beta_k^*)=O_p(1)$.

\end{enumerate}

\begin{enumerate}
\renewcommand{\labelenumi}{\textbf{[A\arabic{enumi}]}}

\item
$\Deb=|\beta_1^*-\beta_2^*|$ depends on $n$, and 
\begin{align*}
\Deb\lto0,\quad
T\Deb^2\lto\infty
\end{align*}
as $n\to\infty$.
Furthermore, $\Deb^{-1}(\beta_k^*-\beta_0)=O(1)$ holds 
for some $\beta_0\in\mathrm{Int}\, \Theta_B$ and $k=1,2$.

\item There exists an integer $\mmd\ge 3$ such that 
$nh^{\mmd/(\mmd-1)}\lto\infty$, $h^{-1/2}\Deb^{\mmd-1}\lto0$ 
and
$b\in C_{\uparrow}^{4,\mmd+1}(\mathbb R^d\times\Theta_B)$.
\end{enumerate}
Let
$\alpha_k^*\lto\alpha_k^{(0)}\in\mathrm{Int}\,\Theta_A$ as $n\to\infty$,
and
\begin{align*}
\Gamma^\beta(x,\alpha,\beta_1,\beta_2)
= 
\tr
\bigl[
A^{-1}(x,\alpha)(b(x,\beta_1)-b(x,\beta_2))^{\otimes2}
\bigr].
\end{align*}
\begin{enumerate}
\renewcommand{\labelenumi}{\textbf{[B\arabic{enumi}]}}
\item
$\displaystyle
\inf_x\Gamma^\beta(x,\alpha_1^{(0)},\beta_1^*,\beta_2^*)>0$.

\item
$\displaystyle
\inf_x\Gamma^\beta(x,\alpha_2^{(0)},\beta_1^*,\beta_2^*)>0$.

\item
There exists a constant $C>0$ such that

\begin{enumerate}
\item

$\displaystyle\sup_{x,\alpha,\beta_k}
\left|
\partial_{(\alpha,\beta_1,\beta_2)}
\Gamma^\beta(x,\alpha,\beta_1,\beta_2)
\right|
<C$, 
\item 
$\displaystyle
\sup_{x,\alpha,\beta_k}
\left|
\Bigl[
\tr
\Bigl(
A^{-1}(x,\alpha)
\partial_{\beta^{\ell}}b(x,\beta_1)
\bigl(
b(x,\beta_2)-b(x,\beta_3)
\bigr)^\TT
\Bigr)
\Bigr]_{\ell=1}^q
\right|<C$. 
\end{enumerate}
\end{enumerate}

\begin{rmk}
See Section 3 in Tonaki et al. (2021) 
for the construction of estimators that satisfy \textbf{[H3]}.
Tonaki et al. (2021)
%\cite{TKU2021} 
assumed $T\Deb^4\lto0$ 
in addition to condition \textbf{[A1]}, 
but there is no need to assume it under \textbf{[C6]}.
\end{rmk}

We consider $\tau_*^\beta<\tau_*^\alpha$.
Suppose that \textbf{[H1]} holds. 
Then, we set
\begin{align*}
&G_i(\beta|\alpha)
=
\tr\left(A^{-1}(\Xs,\alpha)
\frac{(\DeX-hb(\Xs,\beta))^{\otimes 2}}{h}\right),
\\
&\Psi_{1,n}(\tau:\beta_1,\beta_2|\alpha)
=
\sum_{i=1}^{[n\tau]}G_i(\beta_1|\alpha)
+\sum_{i=[n\tau]+1}^{[n\underline{\tau}_n]}G_i(\beta_2|\alpha)
\end{align*}
and propose
\begin{align*}
\hat\tau_{1,n}^\beta
=\underset{\tau\in[0,\underline{\tau}_n]}{\mathrm{argmin}}\,
\Psi_{1,n}(\tau:\hat\beta_1,\hat\beta_2|\hat\alpha_1)
\end{align*}
as an estimator of $\tau_*^\beta$.

We consider $\tau_*^\alpha<\tau_*^\beta$.
Suppose that \textbf{[H2]} holds. 
Then, we set
\begin{align*}
\Psi_{2,n}(\tau:\beta_1,\beta_2|\alpha)
=
\sum_{i=[n\overline{\tau}_n]+1}^{[n\tau]}G_i(\beta_1|\alpha)
+\sum_{i=[n\tau]+1}^n G_i(\beta_2|\alpha)
\end{align*}
and propose
\begin{align*}
\hat\tau_{2,n}^\beta
=\underset{\tau\in[\overline{\tau}_n,1]}{\mathrm{argmin}}\,
\Psi_{2,n}(\tau:\hat\beta_1,\hat\beta_2|\hat\alpha_2)
\end{align*}
as an estimator of $\tau_*^\beta$.

In Case A, we set for $k=1,2$ and $v\in\mathbb R$ , 
\begin{align*}
e_\beta
&=\lim_{n\to\infty}\Deb^{-1}(\beta_1^*-\beta_2^*),\\
\Xi^\beta(x,\alpha,\beta)
&=
\Bigl[
\partial_{\beta^{\ell_1}}b(x,\beta)^\TT
A^{-1}(x,\alpha)\partial_{\beta^\ell_2}b(x,\beta)
\Bigr]_{\ell_1,\ell_2=1}^q,\\
\mathcal J_{k,\beta}
&=e_\beta^\TT 
\int_{\mathbb R^d}
\Xi^\beta(x,\alpha_k^{(0)},\beta_0)
\dd\mu_{(\alpha_k^{(0)},\beta_0)}(x)
e_\beta,\\
\mathbb G_k(v)
&=-2\mathcal J_{k,\beta}^{1/2}\mathbb W(v)+\mathcal J_{k,\beta}|v|.
\end{align*}
%where $\mathbb W$ is a two sided standard Wiener process.
\begin{thm}\label{th5}
Let $\tau_*^\beta<\tau_*^\alpha$.
Suppose that \textbf{[C1]}-\textbf{[C6]}, 
\textbf{[D6]}, \textbf{[E1]}, \textbf{[H1]} and \textbf{[H3]} hold.
\begin{enumerate}

\item[(1)]
Under  \textbf{[A1]} and \textbf{[A2]}, in Case A, 
\begin{align*}
T\Deb^2(\hat\tau_{1,n}^\beta-\tau_*^\beta)
\dto
\underset{v\in\mathbb R}{\mathrm{argmin}}\,
\mathbb G_1(v).
\end{align*}

\item[(2)]
Under \textbf{[B1]} and \textbf{[B3]}, in Case B,
\begin{align*}
T(\hat\tau_{1,n}^\beta-\tau_*^\beta)=O_p(1).
\end{align*}

\end{enumerate}
\end{thm}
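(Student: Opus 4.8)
The plan is to follow the template of Theorem~\ref{th2_TKU}, replacing the diffusion contrast by the drift contrast $G_i$. \textbf{First, reduce to a single diffusion regime.} Using \textbf{[E1]}, $n^{\epsilon_1}(\hat\tau_n^\alpha-\tau_*^\alpha)=o_p(1)$, so $\underline{\tau}_n=\hat\tau_n^\alpha-n^{-\epsilon_1}\to\tau_*^\alpha$ with $\tau_*^\beta<\underline{\tau}_n<\tau_*^\alpha$ on an event of probability tending to one. On this event every index $i\le[n\underline{\tau}_n]$ lies in the diffusion regime $\alpha_1^*$ while the drift switches from $\beta_1^*$ to $\beta_2^*$ at $[n\tau_*^\beta]$, so the problem becomes estimation of a single drift change point under a fixed diffusion coefficient consistently estimated by $\hat\alpha_1$ (via \textbf{[D6]}). \textbf{Next, decompose the contrast.} For $\tau>\tau_*^\beta$ (the case $\tau<\tau_*^\beta$ is symmetric), setting $u=\DeX-hb(\Xs,\beta_1)$, $v=\DeX-hb(\Xs,\beta_2)$ and using $\tr(A^{-1}(u^{\otimes2}-v^{\otimes2}))=(u-v)^\TT A^{-1}(u+v)$ gives
\begin{align*}
G_i(\beta_1|\alpha)-G_i(\beta_2|\alpha)
=\bigl(b(\Xs,\beta_2)-b(\Xs,\beta_1)\bigr)^\TT A^{-1}(\Xs,\alpha)
\bigl(2\DeX-h(b(\Xs,\beta_1)+b(\Xs,\beta_2))\bigr).
\end{align*}
Inserting $\DeX=hb(\Xs,\beta_2^*)+a(\Xs,\alpha_1^*)\Delta W_i+R_i$ for $i>[n\tau_*^\beta]$ splits each summand of $\Psi_{1,n}(\tau)-\Psi_{1,n}(\tau_*^\beta)$ into a \emph{signal} $h\,\Gamma^\beta(\Xs,\alpha_1^*,\beta_1^*,\beta_2^*)$ and a \emph{martingale} increment of conditional variance $4h\,\Gamma^\beta$, with $R_i$ the discretization remainder from $\EE[\DeX\mid\GG]$.

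\textbf{Then handle the plug-in and prove consistency.} Replacing $(\hat\beta_1,\hat\beta_2,\hat\alpha_1)$ by $(\beta_1^*,\beta_2^*,\alpha_1^*)$ costs $O_p(T^{-1/2})$ and $O_p(n^{-1/2})$ by \textbf{[H3]} and \textbf{[D6]}; I would use the Lipschitz bounds \textbf{[B3]} to show the induced change in the $\tau$-increment of $\Psi_{1,n}$ is uniformly negligible over the localized range, so the argmin is unaffected. By the ergodic theorem \textbf{[C6]}, for fixed $\tau>\tau_*^\beta$,
\begin{align*}
\frac1T\bigl(\Psi_{1,n}(\tau)-\Psi_{1,n}(\tau_*^\beta)\bigr)
\pto(\tau-\tau_*^\beta)\int_{\mathbb R^d}\Gamma^\beta(x,\alpha_1^{(0)},\beta_1^*,\beta_2^*)
\dd\mu_{(\alpha_1^{(0)},\beta_2^*)}(x),
\end{align*}
which is strictly positive by \textbf{[B1]} (with the analogous statement for $\tau<\tau_*^\beta$), so the limit is uniquely minimized at $\tau_*^\beta$ and $\hat\tau_{1,n}^\beta\pto\tau_*^\beta$.

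\textbf{Finally, obtain the rate and limit law.} For part (2), a H\'ajek--R\'enyi maximal inequality should show that on $\{T|\tau-\tau_*^\beta|\ge K\}$ the linear signal of order $T|\tau-\tau_*^\beta|$ dominates the noise of order $\sqrt{T|\tau-\tau_*^\beta|}$ for large $K$, giving $T(\hat\tau_{1,n}^\beta-\tau_*^\beta)=O_p(1)$. For part (1), I would set $v=T\Deb^2(\tau-\tau_*^\beta)$ and expand $b(\cdot,\beta_1^*)-b(\cdot,\beta_2^*)=\partial_\beta b(\cdot,\beta_0)(\beta_1^*-\beta_2^*)+o(\Deb)$ with $\Deb^{-1}(\beta_1^*-\beta_2^*)\to e_\beta$; the localized signal then tends to $\mathcal J_{1,\beta}|v|$ and, by the martingale central limit theorem, the noise to $-2\mathcal J_{1,\beta}^{1/2}\mathbb W(v)$, so the localized process converges to $\mathbb G_1(v)$. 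Tightness (the drift eventually dominating the two-sided Brownian fluctuation) together with the argmin continuous mapping theorem would yield $T\Deb^2(\hat\tau_{1,n}^\beta-\tau_*^\beta)\dto\underset{v\in\mathbb R}{\mathrm{argmin}}\,\mathbb G_1(v)$.

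\textbf{The hard part} will be Case~A: the discretization remainder $R_i$ and the accumulated higher-order terms of $\EE[\DeX\mid\GG]$ must be controlled relative to the shrinking signal of size $\Deb^2$. The conditions $h^{-1/2}\Deb^{\mmd-1}\to0$, $nh^{\mmd/(\mmd-1)}\to\infty$ and $b\in C_{\uparrow}^{4,\mmd+1}(\mathbb R^d\times\Theta_B)$ in \textbf{[A2]} are exactly what is needed so that the $\mmd$-th order drift expansion absorbs these errors while the martingale CLT and the tightness estimate still go through.
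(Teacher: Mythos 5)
Your proposal is correct and follows essentially the same route as the paper: the paper's own proof consists of exactly your telescoping observation, namely that $\Psi_{1,n}(\tau:\beta_1,\beta_2|\alpha)-\Psi_{1,n}(\tau_*^\beta:\beta_1,\beta_2|\alpha)$ reduces to $\sum_{i=[n\tau_*^\beta]+1}^{[n\tau]}\bigl(G_i(\beta_1|\alpha)-G_i(\beta_2|\alpha)\bigr)$ (and its mirror image for $\tau<\tau_*^\beta$), after which it cites the proofs of Theorems 3 and 4 of Tonaki et al. (2021) for the signal/martingale analysis, rate, and argmin convergence. Your sketch simply writes out in outline the machinery that the paper delegates to that citation --- the $h\,\Gamma^\beta$ signal with martingale noise of conditional variance $4h\,\Gamma^\beta$, the plug-in control via \textbf{[H3]}/\textbf{[D6]}/\textbf{[B3]}, the H\'ajek--R\'enyi rate argument, and the localization $v=T\Deb^2(\tau-\tau_*^\beta)$ with the argmin continuous mapping theorem --- so the two arguments coincide in substance.
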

%%%%%%%%
\begin{thm}\label{th6}
Let $\tau_*^\alpha<\tau_*^\beta$.
Suppose that \textbf{[C1]}-\textbf{[C6]}, 
\textbf{[D6]}, \textbf{[E1]}, \textbf{[H2]} and \textbf{[H3]} hold.
\begin{enumerate}

\item[(1)]
Under \textbf{[A1]} and \textbf{[A2]}, in Case A, 
\begin{align*}
T\Deb^2(\hat\tau_{2,n}^\beta-\tau_*^\beta)
\dto
\underset{v\in\mathbb R}{\mathrm{argmin}}\,
\mathbb G_2(v).
\end{align*}

\item[(2)]
Under \textbf{[B2]} and \textbf{[B3]}, in Case B,
\begin{align*}
T(\hat\tau_{2,n}^\beta-\tau_*^\beta)=O_p(1).
\end{align*}

\end{enumerate}
\end{thm}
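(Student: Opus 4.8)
The plan is to adapt the minimum-contrast change-point argument of Tonaki et al. (2021) to the right subinterval $[\overline{\tau}_n,1]$, exploiting the fact that, under $\tau_*^\alpha<\tau_*^\beta$, this interval asymptotically isolates the drift change point $\tau_*^\beta$ from the diffusion change point $\tau_*^\alpha$. The argument is the mirror image of the proof of Theorem \ref{th5}, so I would first record the structural reduction and then treat Cases A and B in turn. By \textbf{[E1]}, $\overline{\tau}_n-\tau_*^\alpha=n^{-\epsilon_1}+o_p(n^{-\epsilon_1})\to0$, so with probability tending to one one has $\tau_*^\alpha<\overline{\tau}_n<\tau_*^\beta$; hence every index $i>[n\overline{\tau}_n]$ entering $\Psi_{2,n}$ lies past the diffusion change, where the true diffusion parameter is $\alpha_2^*$ (consistently estimated by $\hat\alpha_2$ via \textbf{[D6]}) and the true drift equals $\beta_1^*$ on $[\overline{\tau}_n,\tau_*^\beta)$ and $\beta_2^*$ on $[\tau_*^\beta,1]$. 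On this event the problem reduces to a single drift change point with a well-estimated, non-changing diffusion coefficient, and the random left endpoint $\overline{\tau}_n$ stays bounded away from $\tau_*^\beta$, so the contribution of the boundary region $[\tau_*^\alpha,\overline{\tau}_n]$ is asymptotically negligible relative to the signal at $\tau_*^\beta$.

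Next I would analyze the centered contrast $\Psi_{2,n}(\tau:\hat\beta_1,\hat\beta_2|\hat\alpha_2)-\Psi_{2,n}(\tau_*^\beta:\hat\beta_1,\hat\beta_2|\hat\alpha_2)$. Substituting the Euler expansion $\DeX=hb(\Xs,\beta_2^*)+a(\Xs,\alpha_2^*)\Delta W_i+R_i$ (the true local drift being $\beta_2^*$ on the range $[n\tau_*^\beta]<i\le[n\tau]$) and expanding $G_i(\hat\beta_1|\hat\alpha_2)-G_i(\hat\beta_2|\hat\alpha_2)$ separates the increment into (i) a deterministic \emph{signal} term whose leading part is $h\,\Gamma^\beta(\Xs,\alpha_2^{(0)},\beta_1^*,\beta_2^*)$, using $\Gamma^\beta(x,\alpha,\beta_2^*,\beta_2^*)=0$, and (ii) a \emph{martingale} term of the form $-2\bigl(b(\Xs,\hat\beta_1)-b(\Xs,\hat\beta_2)\bigr)^\TT A^{-1}(\Xs,\alpha_2^*)a(\Xs,\alpha_2^*)\Delta W_i$, plus discretization remainders. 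For the consistency $\hat\tau_{2,n}^\beta\pto\tau_*^\beta$, I would apply the uniform law of large numbers from \textbf{[C6]} to show that the suitably normalized contrast converges to a deterministic limit with a strict, unique minimum at $\tau_*^\beta$, identifiability being guaranteed by \textbf{[B2]} ($\inf_x\Gamma^\beta(x,\alpha_2^{(0)},\beta_1^*,\beta_2^*)>0$).

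Localizing at $\tau=\tau_*^\beta+v/(T\Deb^2)$ in Case A (resp. $v/T$ in Case B), the number of mismatched summands is $\asymp v/(h\Deb^2)$, so with $b(\Xs,\beta_1^*)-b(\Xs,\beta_2^*)\approx\Deb\,\partial_\beta b(\Xs,\beta_0)e_\beta$ the signal contributes $\mathcal J_{2,\beta}|v|$, while a martingale central limit theorem turns term (ii) into $-2\mathcal J_{2,\beta}^{1/2}\mathbb W(v)$; together these give the localized contrast $\mathbb G_2(v)+o_p(1)$ uniformly on compacts. A continuous-mapping/argmin theorem, using that $\mathbb G_2$ has an a.s. unique minimizer because $\mathbb W$ is a two-sided Brownian motion, then yields part (1). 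For part (2), where $\Deb$ is fixed, I would stop at a maximal-inequality and tightness estimate: the quadratic-type growth of the deterministic part against the $O_p(\sqrt{\cdot})$ fluctuations of the martingale part forces $T(\hat\tau_{2,n}^\beta-\tau_*^\beta)=O_p(1)$ without identifying the (compound-Poisson-type) limit law, which is why only boundedness in probability is claimed.

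The main obstacle, beyond the now-standard drift change-point analysis, is twofold. First, one must control the \emph{plug-in and random-truncation errors} — the estimation errors of $\hat\alpha_2,\hat\beta_1,\hat\beta_2$ and the replacement of the deterministic boundary $\tau_*^\alpha$ by the random $\overline{\tau}_n$ — so that they do not perturb the localized limit; here the rate $n^{-\epsilon_1}$ in \textbf{[E1]} must dominate the boundary effect. Second, in Case A one must handle the higher-order It\^o--Taylor remainders $R_i$ uniformly as $\Deb\to0$, where \textbf{[A2]} ($h^{-1/2}\Deb^{\mmd-1}\to0$ together with $nh^{\mmd/(\mmd-1)}\to\infty$) is precisely what makes these remainders negligible against the shrinking signal of order $\Deb^2$. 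Establishing the joint functional convergence of the martingale part to $\mathbb W$, uniformly over the localized index on both sides of $\tau_*^\beta$, is the technical heart of part (1).
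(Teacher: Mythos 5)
Your proposal is correct and follows essentially the same route as the paper: the paper proves Theorem \ref{th6} by noting it is the mirror image of Theorem \ref{th5}, whose proof consists of the algebraic observation that $\Psi_{2,n}(\tau:\beta_1,\beta_2|\alpha)-\Psi_{2,n}(\tau_*^\beta:\beta_1,\beta_2|\alpha)$ telescopes to a sum of $G_i(\beta_1|\alpha)-G_i(\beta_2|\alpha)$ over only the mismatched indices (so the random endpoint $\overline{\tau}_n$ cancels exactly, rather than being merely ``negligible'' as you argue), after which the localization, signal/martingale decomposition, and argmin-convergence machinery you spell out is imported wholesale from Theorems 3 and 4 of Tonaki et al.\ (2021). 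Your write-up simply unfolds that cited argument in detail, so there is no substantive difference in method.
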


\begin{rmk}\label{rmk11}
In the case of the 1-dimensional Ornstein-Uhlenbeck process defined by
$\dd X_t=-\beta(X_t-\gamma)\dd t+\alpha\dd W_t$ 
($\alpha,\beta>0$, $\gamma\in\mathbb R$), 
\textbf{[B1]}-\textbf{[B3]} are satisfied 
if $\beta$ does not change and $\gamma$ changes.
Therefore, Theorems \ref{th5} and \ref{th6} (2) 
enable us to estimate the change point of the drift parameter in Case B. 
Moreover, in the case of the hyperbolic diffusion model defined by
$\dd X_t=(\beta-\gamma X_t/\sqrt{1+X_t^2})\dd t+\alpha\dd W_t$ 
($\alpha>0$, $\beta\in\mathbb R$, $|\beta|<\gamma$), 
since \textbf{[B1]}-\textbf{[B3]} hold, 
we can estimate 
the change point in time of the drift parameter in Case B.
On the other hand, in Case A, 
the change point of the drift parameter can be estimated 
in both cases of the 1-dimensional Ornstein-Uhlenbeck process 
and the hyperbolic diffusion model.
\end{rmk}

\section{Examples and simulation results}\label{sec4}
We consider the following stochastic differential equation 
with a change point in the diffusion parameter.
\begin{align*}
X_t
=
\begin{cases}
\displaystyle
X_0+\int_0^t b(X_s,\beta)\dd s+\int_0^t a(X_s,\alpha_1^*) \dd W_s, 
& t\in[0,\tau_*^\alpha T),\\
\displaystyle
X_{\tau_*^\alpha T}+\int_{\tau_*^\alpha T}^t b(X_s,\beta)\dd s
+\int_{\tau_*^\alpha T}^t a(X_s,\alpha_2^*) \dd W_s, & t\in[\tau_*^\alpha T,T],
\end{cases}
\end{align*}
where $\alpha_1^*\neq\alpha_2^*$,
and
$\beta$ may change in $[0,T]$. 

In this section, 
we consider the following three situations 
and confirm the results of Sections \ref{sec2} and \ref{sec3} 
by numerical simulations.

\begin{enumerate}

\item[(i)]
The drift parameter $\beta$ does not change over $[0,T]$.

\item[(ii)]
The drift parameter $\beta$ changes from $\beta_1^*$ to $\beta_2^*$ 
at $\tau_*^\beta T$,
where $\tau_*^\beta\in(0,1)$, $\tau_*^\beta\neq\tau_*^\alpha$.

\item[(iii)]
The drift parameter $\beta$ changes from $\beta_1^*$ to $\beta_2^*$ 
at the same point as the diffusion parameter.

\end{enumerate}
We perform numerical simulations with the following procedures.

\begin{description}

\item[All situations] 
Perform the following six steps in all situations (i)-(iii).

\begin{enumerate}
\item
Test for a change in the diffusion parameter in $[0,T]$ 
to check whether there is a change point or not.
See Subsection \ref{secR-1} for the test statistic.

\item
If a change is detected in (1), 
choose $\tau_1,\tau_2\in(0,1)$ such that 
the test detects the change in $[\tau_1T,\tau_2T]$ 
to estimate $\alpha_1^*$ and $\alpha_2^*$.
See Section 3 of Tonaki et al. (2021).

\item
Construct estimators $\hat\alpha_1$ and $\hat\alpha_2$ 
for $\alpha_1^*$ and $\alpha_2^*$ 
from $[0,\tau_1T]$ and $[\tau_2T,T]$, respectively.

\item
Estimate $\tau_*^\alpha$ 
with the estimators $\hat\alpha_1$ and $\hat\alpha_2$ of (3).
Let $\hat\tau_n^\alpha$ be the estimator of $\tau_*^\alpha$.
See Subsection \ref{secR-2} for the estimator of $\tau_*^\alpha$.

\item
Choose $\epsilon_1>0$ such that 
$n^{\epsilon_1}=(n^{0.5}\land n|\hat\alpha_1-\hat\alpha_2|^2)^{0.9}$,  
and 
set ${\underline\tau}_n=\hat\tau_n^\alpha-n^{-\epsilon_1}$ 
and ${\overline\tau}_n=\hat\tau_n^\alpha+n^{-\epsilon_1}$.

\item
Test for a change in the drift parameter 
in $[0,{\underline\tau}_n T]$ and $[{\overline\tau}_n T,T]$
to check whether there is a change point or not.
See Subsection \ref{sec2-1} for the test statistics.

\end{enumerate}

\item[Situations (i) and (iii)]
After (1)-(6), perform the following step in situations (i) and (iii).

\begin{enumerate}
\item[(7)]
If neither of the tests in (6) detects a change, 
construct estimators $\check\beta_1$ and $\check\beta_2$ 
from $[0,{\underline\tau}_n T]$ and $[{\overline\tau}_n T,T]$, respectively, 
and investigate $\sqrt T|\check\beta_1-\check\beta_2|$
to check whether the drift parameter changes at the same point 
as the diffusion parameter. See Subsection \ref{sec2-3}.

\end{enumerate}

\item[Situation (ii)]
After (1)-(6), perform the following step in situation (ii).

\begin{enumerate}

\item[(7)]
If a change is detected in (6), estimate $\tau_*^\beta$. 
Note that the estimators $\hat\beta_1$ and $\hat\beta_2$ 
for $\beta_1^*$ and $\beta_2^*$ are constructed in the same way as (2)-(3).
See Section \ref{sec3} for the estimator of $\tau_*^\beta$.

\end{enumerate}

\end{description}

All simulations are conducted at significance level $0.05$ and 
the corresponding critical values are  
obtained from the following: the Brownian bridge is 
generated by taking $10^4$ points 
on the interval $[0,1]$, 
and the maximum value of its norm is recorded. 
This is repeated $10^4$ times. As a result, we have
\begin{align*}
P\left(\sup_{0\le s\le 1}| { \boldsymbol B_1^0(s)} |> 1.3617\right)=0.05,\quad
P\left(\sup_{0\le s\le 1}\| { \boldsymbol B_2^0(s)} \|> 1.5736\right)=0.05,
\end{align*}
i.e., the corresponding critical values are $1.3617$ and $1.5736$ for 
the 
1-dimensional and 2-dimensional Brownian bridges, respectively.

\subsection{Model 1}
We consider the $1$-dimensional Ornstein-Uhlenbeck process defined by 
\begin{align*}
\dd X_t=-\beta(X_t-\gamma)\dd t+\alpha\dd W_t,\quad X_0=x_0,
\end{align*}
where $\alpha,\beta>0$ and $\gamma\in\mathbb R$.
For simulations of the test statistics and the estimator, 
we study the following stochastic differential equation
%Let $\boldsymbol\beta=(\beta,\gamma)$.
\begin{align*}
X_t
=
\begin{cases}
\displaystyle
X_0-\int_0^t \beta(X_s-\gamma)\dd s+\alpha_1^* W_t, 
& t\in[0,\tau_*^\alpha T),\\
\displaystyle
X_{\tau_*^\alpha T}-\int_{\tau_*^\alpha T}^t \beta(X_s-\gamma)\dd s
+\alpha_2^* (W_t-W_{\tau_*^\alpha T}), & t\in[\tau_*^\alpha T,T],
\end{cases}
\end{align*}
where $x_0=2$, $\tau_*^\alpha=0.8$, $\alpha_1^*=1$, $\alpha_2^*=1.2$, 
$\boldsymbol\beta=(\beta,\gamma)$.

We consider the following three situations.
\begin{enumerate}
\item[(i)]
The drift parameter $\boldsymbol\beta=(1,2)$
%$=(\beta^*,\gamma^*)=(1,2)$
does not change over $[0,T]$.

\item[(ii)]
The drift parameter $\boldsymbol\beta$ changes 
from $\boldsymbol\beta_1^*=(1,2-\Deb)$ 
to $\boldsymbol\beta_2^*=(1,2)$ 
at $\tau_*^{\boldsymbol\beta}=0.4$ (Case A). 

\item[(iii)]
The drift parameter $\boldsymbol\beta$ changes 
from $\boldsymbol\beta_1^*=(1,2-\Deb)$ to $\boldsymbol\beta_2^*=(1,2)$ 
at $\tau_*^{\boldsymbol\beta}=\tau_*^\alpha=0.8$.

\end{enumerate}
The number of iteration is 1000.
We set that the sample size of the data $\{\Xt\}_{i=0}^n$ is $n=10^6$ or $10^7$, 
$h=n^{-0.52}$, $T=nh=n^{0.48}$, $nh^2=n^{-0.04}$,
$\Deb=n^{-0.1}$.

We test for changes in the diffusion parameter in the interval $[0,T]$. 
The results show that in all situations (i)-(iii), 
the change is detected in all $1000$ iterations.
In order to estimate the parameters before and after the change, 
we test for the change in the diffusion parameter in the interval $[0.125T,0.875T]$. 
The results indicate that the change is detected in all $1000$ iterations 
for all situations (i)-(iii). 
Therefore, we estimate $\alpha_1^*$ from $[0,0.125T]$ and 
$\alpha_2^*$ from $[0.875T,T]$,
and estimate $\tau_*^\alpha$ using the estimators $\hat\alpha_1$ and $\hat\alpha_2$.
The estimates of $\alpha_1^*$, $\alpha_2^*$ and $\tau_*^\alpha$ 
are reported in Table \ref{tab1}.
In this case, 
we chose $\epsilon_1=0.45$ for all iterations. 
%$n^{-\epsilon_1}=1.995\times 10^{-3}, 7.079\times10^{-4}$ with $n=10^6,10^7$. 

%%%%%%%%%%%%%%%%%%%% (i) %%%%%%%%%%%%%%%%%%%%
\begin{table}[h]
\captionsetup{margin=5pt}
\caption{
Mean and standard deviation of the estimators. 
True values: 
$\alpha_1^*=1$, $\alpha_2^*=1.2$, $\tau_*^\alpha=0.8$.
}
\begin{center}
\begin{tabular*}{1.0\textwidth}{@{\extracolsep{\fill}}ccccccc}\hline
$n$ & $T$ & $h$ && $\hat\alpha_1$ & $\hat\alpha_2$ & $\hat\tau_n^\alpha$ 
\rule[0mm]{0cm}{4mm}\\\hline 
$10^6$ & 758.58 & $7.59\times10^{-4}$ & 
(i)& 1.00018 & 1.20018 & 0.79878    \rule[0mm]{0cm}{4mm}\\ 
&&&& (0.00207) & (0.00235) & (0.00016)  \\
&&& 
(ii)& 1.00018 & 1.20018 & 0.79879    \rule[0mm]{0cm}{4mm}\\ 
&&&& (0.00207) & (0.00235) & (0.00015)  \\
&&& 
(iii)& 1.00018 & 1.20018 & 0.79879    \rule[0mm]{0cm}{4mm}\\ 
&&&& (0.00207) & (0.00235) & (0.00015)  \\\hline
$10^7$ & 2290.87 & $2.29\times10^{-4}$ &
(i)& 1.00005 & 1.20010 & 0.79961   \rule[0mm]{0cm}{4mm}\\ 
&&&& (0.00062) & (0.00078) & (0.00006)  \\
&&& 
(ii)& 1.00005 & 1.20010 & 0.79961    \rule[0mm]{0cm}{4mm}\\ 
&&&& (0.00062) & (0.00078) & (0.00006)  \\
&&& 
(iii)& 1.00005 & 1.20010 & 0.79961    \rule[0mm]{0cm}{4mm}\\ 
&&&& (0.00062) & (0.00078) & (0.00006)  \\\hline
\end{tabular*}
\label{tab1}
\end{center}
\end{table}

Next, we test for changes in the drift parameter in the intervals 
$[0,\underline{\tau}_n T]$ and $[\overline{\tau}_n T,T]$. 
The results of the tests for changes in the drift parameter 
are summarized in Table \ref{tab2} 
and Figures \ref{model1i_test}, \ref{model1ii_test} and \ref{model1iii_test}.
From Table \ref{tab2}, Figures \ref{model1i_test} and \ref{model1iii_test},
we can see that in (i) and (iii), 
the proportions of the test statistics that 
exceed the critical values are close to the significance level $0.05$, 
and the distribution of the test statistics almost correspond with 
the null distribution. 
This implies that the test statistics have good performance.
Therefore, in (i) and (iii), we construct estimators 
$\check{\boldsymbol\beta}_1=(\check\beta_1,\check\gamma_1)$ 
and $\check{\boldsymbol\beta}_2=(\check\beta_2,\check\gamma_2)$ 
from the intervals $[0,\underline{\tau}_n T]$ and $[\overline{\tau}_n T,T]$,
respectively 
when the test statistics $\TUO$ and $\TOO$ do not exceed the critical value, 
and investigate 
$\sqrt T\|\check{\boldsymbol\beta}_1-\check{\boldsymbol\beta}_2\|$. 
The results of the estimates of $\boldsymbol\beta_1^*$ and $\boldsymbol\beta_2^*$ 
in (i) and (iii) are summarized in 
Table \ref{tab3} and Figure \ref{model1i_test_M},
and Table \ref{tab5} and Figure \ref{model1iii_test_M}, respectively. 
From Figure \ref{model1i_test_M},
$\sqrt T\|\check{\boldsymbol\beta}_1-\check{\boldsymbol\beta}_2\|$ 
does not diverge when increasing from $n=10^6$ to $n=10^7$ in (i).
In this case, it appears that
$\sqrt T\|\check{\boldsymbol\beta}_1-\check{\boldsymbol\beta}_2\|$
is bounded in probability.
Meanwhile,
from Figure \ref{model1iii_test_M}, 
$\sqrt T\|\check{\boldsymbol\beta}_1-\check{\boldsymbol\beta}_2\|$ 
diverges when increasing from $n=10^6$ to $n=10^7$ in (iii).
%In this case, it appears that
%$\sqrt T\|\check{\boldsymbol\beta}_1-\check{\boldsymbol\beta}_2\|\lto\infty$. 
According to Subsection \ref{sec2-3}, 
we can infer that the drift parameter changes at the same time 
as the diffusion parameter in (iii).
However, as we can see by comparing 
Figures \ref{model1i_test_M} (B) and \ref{model1iii_test_M} (B), 
it would be difficult to determine whether 
the drift parameter changes at the same point as the diffusion parameter 
when $n=10^7$. 
In this case, it would be possible to determine 
whether there is a change when $n=10^9$, but this is not realistic.

\begin{table}[h]
\captionsetup{margin=5pt}
\caption{
Proportions over the corresponding critical value.
}
\begin{center}
\begin{tabular*}{1.0\textwidth}{@{\extracolsep{\fill}}cccccccc}\hline
$n$ & $T$ & $h$ && $\TUO$ & $\TUT$ & $\TOO$ & $\TOT$ 
\rule[0mm]{0cm}{5mm}\\\hline 
$10^6$ & 758.58 & $7.59\times10^{-4}$ 
& (i)
& 0.040 & 0.034 & 0.045 & 0.048   \rule[0mm]{0cm}{4mm}\\ 
&&& (ii)
& 0.784 & 0.704 & 0.045 & 0.046    \rule[0mm]{0cm}{4mm}\\ 
&&& (iii)
& 0.040 & 0.034 & 0.046 & 0.048    \rule[0mm]{0cm}{4mm}\\\hline 
$10^7$ & 2290.87 & $2.29\times10^{-4}$ 
& (i)
& 0.048 & 0.053 & 0.040 & 0.046    \rule[0mm]{0cm}{4mm}\\ 
&&& (ii)
& 0.981 & 0.944 & 0.040 & 0.045    \rule[0mm]{0cm}{4mm}\\ 
&&& (iii)
& 0.048 & 0.053 & 0.040 & 0.046    \rule[0mm]{0cm}{4mm}\\\hline
\end{tabular*}
\label{tab2}
\end{center}
\end{table}

\begin{figure}[h]
\captionsetup{margin=5pt}
\captionsetup[sub]{margin=5pt}
 \begin{minipage}[t]{0.46\linewidth}
  \centering
  \includegraphics[keepaspectratio, width=67mm]{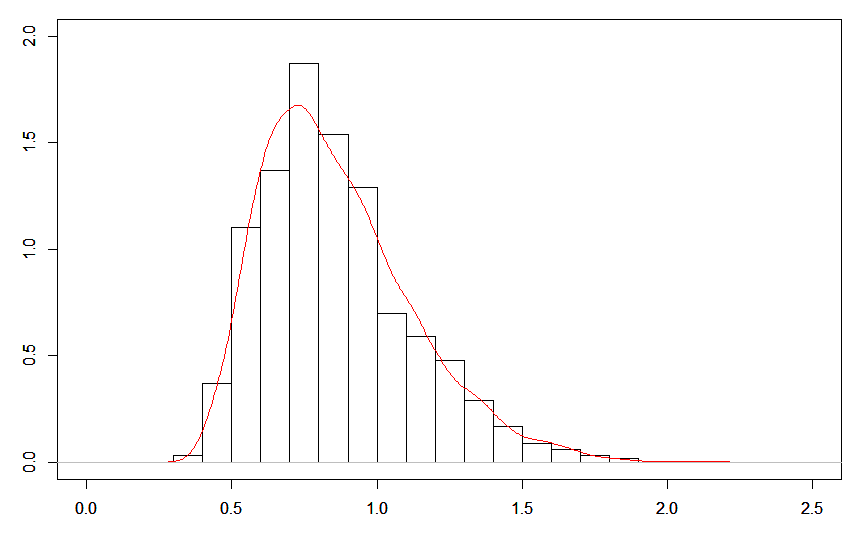}
  \subcaption{
  		Histogram of $\TUO$ with $n=10^7$.
  }
 \end{minipage}
 \begin{minipage}[t]{0.46\linewidth}
  \centering
  \includegraphics[keepaspectratio, width=67mm]{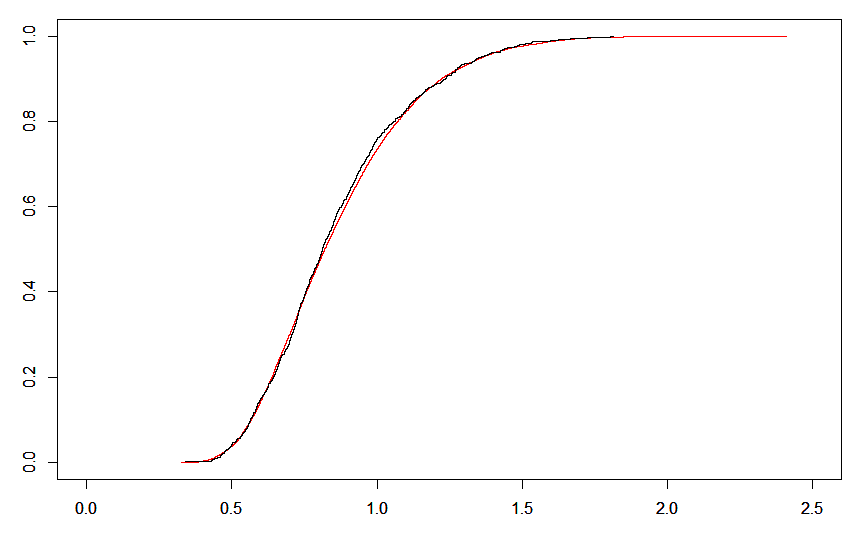}
  \subcaption{
  		EDF of $\TUO$ with $n=10^7$. 
  }
 \end{minipage}\\
 \begin{minipage}[t]{0.46\linewidth}
  \centering
  \includegraphics[keepaspectratio, width=67mm]{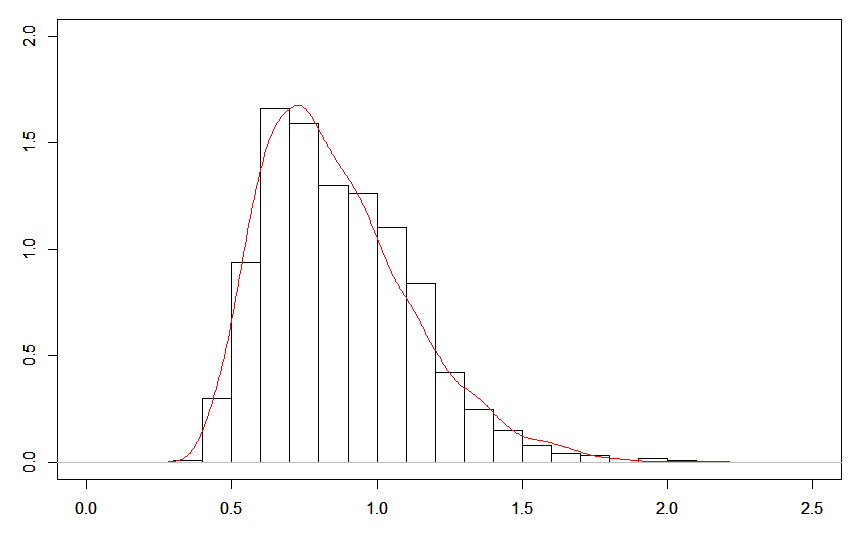}
  \subcaption{
  		Histogram of $\TOO$ with $n=10^7$.
  }
 \end{minipage}
 \begin{minipage}[t]{0.46\linewidth}
  \centering
  \includegraphics[keepaspectratio, width=67mm]{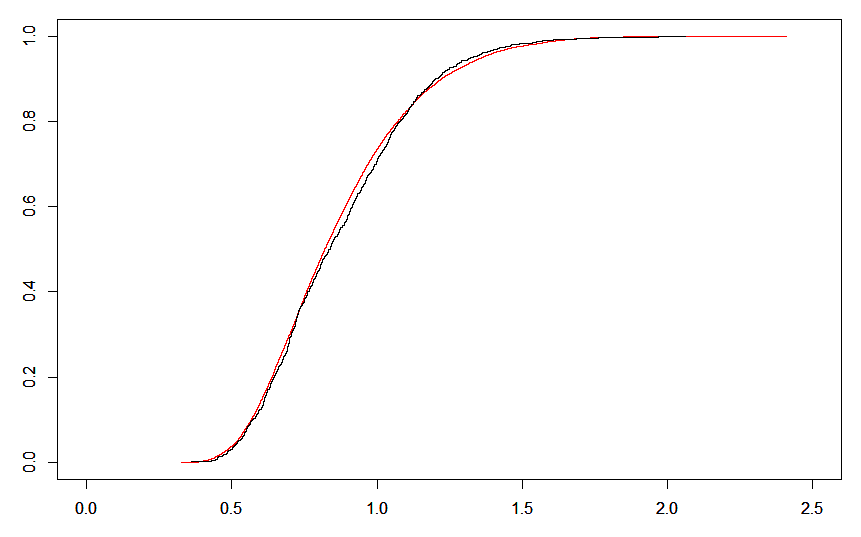}
  \subcaption{
  		EDF of $\TOO$ with $n=10^7$. 
  }
 \end{minipage}
 \caption{
 		Histogram (black line) versus theoretical density function (red line) 
 		and
 		empirical distribution function (black line) versus 
 		theoretical distribution function (red line) in (i).
 }
 \label{model1i_test}
\end{figure}

\begin{table}[h]
\captionsetup{margin=5pt}
\caption{
Mean and standard deviation of the estimators in (i). 
True values: $\beta^*=1$, $\gamma^*=2$.
}
\begin{center}
\begin{tabular*}{1.0\textwidth}{@{\extracolsep{\fill}}ccccccc}\hline
$n$ & $T$ & $h$ & $\check\beta_1$ & $\check\gamma_1$ & $\check\beta_2$ & 
$\check\gamma_2$ 
\rule[0mm]{0cm}{4mm}\\\hline 
$10^6$ & 758.58 & $7.59\times10^{-4}$ 
& 1.00700 & 1.99850 & 1.02695 & 1.99989    \rule[0mm]{0cm}{4mm}\\ 
&&& (0.05871) & (0.04084) & (0.12165) & (0.09591)  \\
$10^7$ & 2290.87 & $2.29\times10^{-4}$ 
& 1.00211 & 2.00024 & 1.00837 & 1.99797    \rule[0mm]{0cm}{4mm}\\ 
&&& (0.03321) & (0.02422) & (0.06541) & (0.05436)  \\\hline
\end{tabular*}
\label{tab3}
\end{center}
\end{table}

\begin{figure}[h]
\captionsetup{margin=5pt}
\captionsetup[sub]{margin=5pt}
 \begin{minipage}[t]{0.46\linewidth}
  \centering
  \includegraphics[keepaspectratio, width=67mm]
  {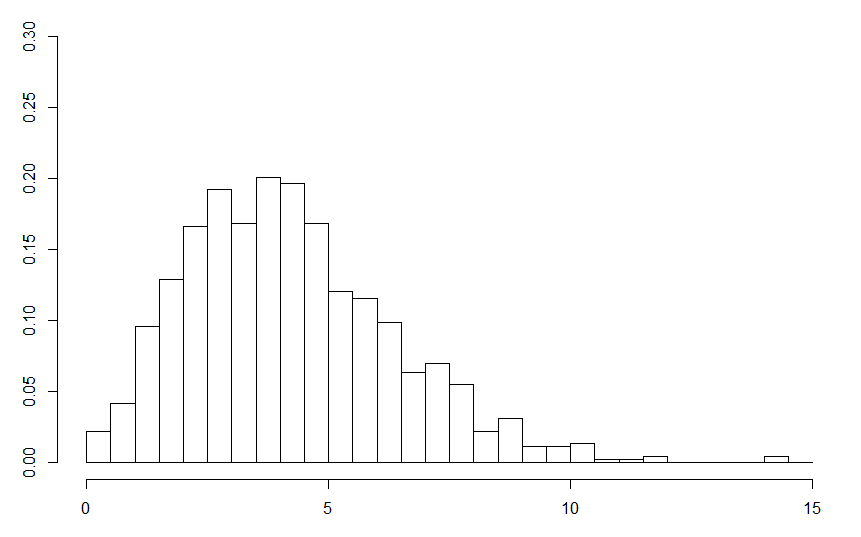}
  \subcaption{
  		$n=10^6$. 
  }
 \end{minipage}
 \begin{minipage}[t]{0.46\linewidth}
  \centering
  \includegraphics[keepaspectratio, width=67mm]
  {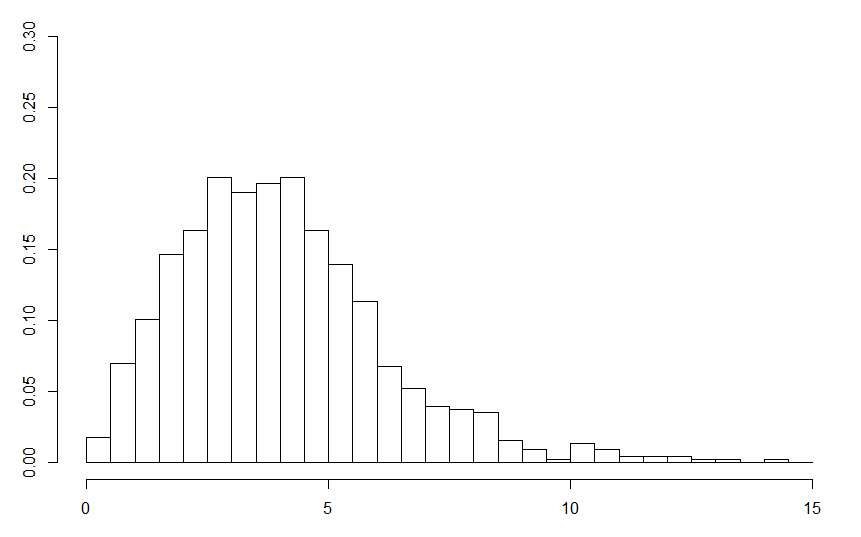}
  \subcaption{
  		$n=10^7$. 
  }
 \end{minipage}
 \caption{
  		Histogram of 
  		$\sqrt T\|\check{\boldsymbol\beta}_1-\check{\boldsymbol\beta}_2\|$  
 		in (i).
 }
 \label{model1i_test_M}
\end{figure}

\begin{figure}[h]
\captionsetup{margin=5pt}
\captionsetup[sub]{margin=5pt}
 \begin{minipage}[t]{0.46\linewidth}
  \centering
  \includegraphics[keepaspectratio, width=67mm]
  {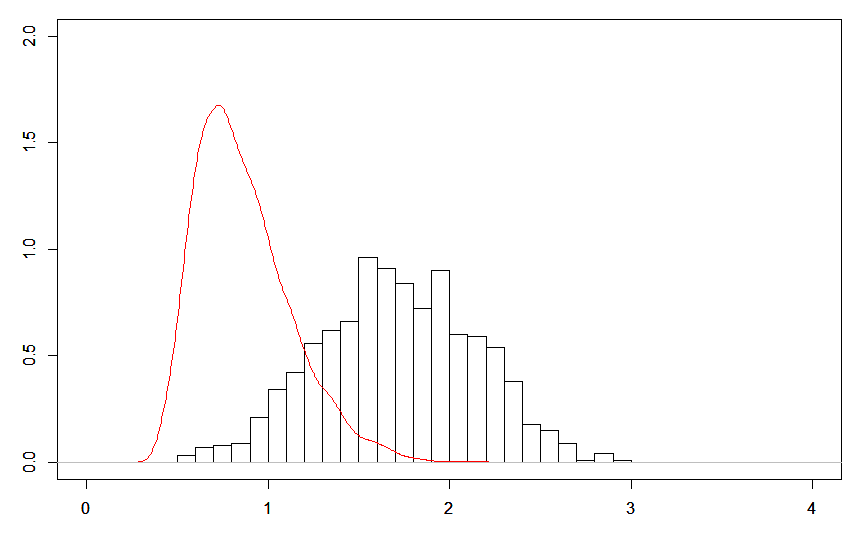}
  \subcaption{
  		Histogram of $\TUO$ with $n=10^6$.
  }
 \end{minipage}
 \begin{minipage}[t]{0.46\linewidth}
  \centering
  \includegraphics[keepaspectratio, width=67mm]
  {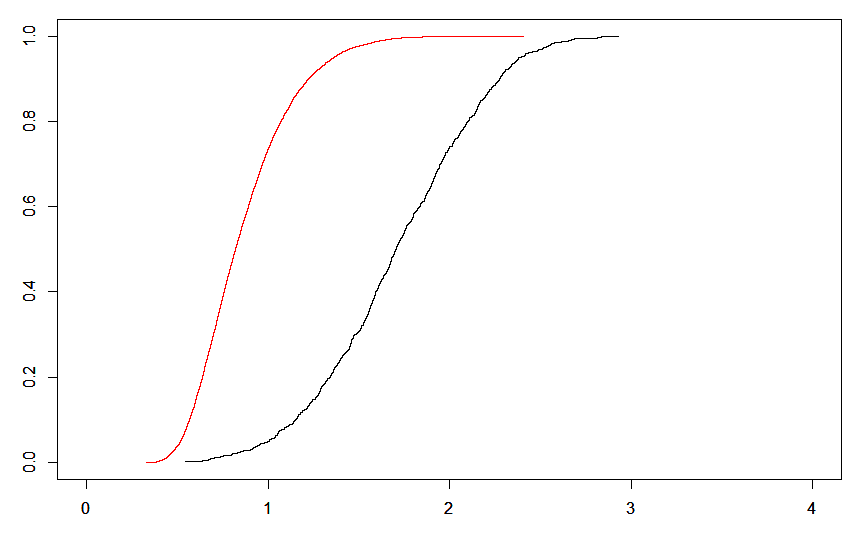}
  \subcaption{
  		EDF of $\TUO$ with $n=10^6$. 
  		%Empirical distribution function of $\TUO$ with $n=10^6$. 
  }
 \end{minipage}\\
 \begin{minipage}[t]{0.46\linewidth}
  \centering
  \includegraphics[keepaspectratio, width=67mm]
  {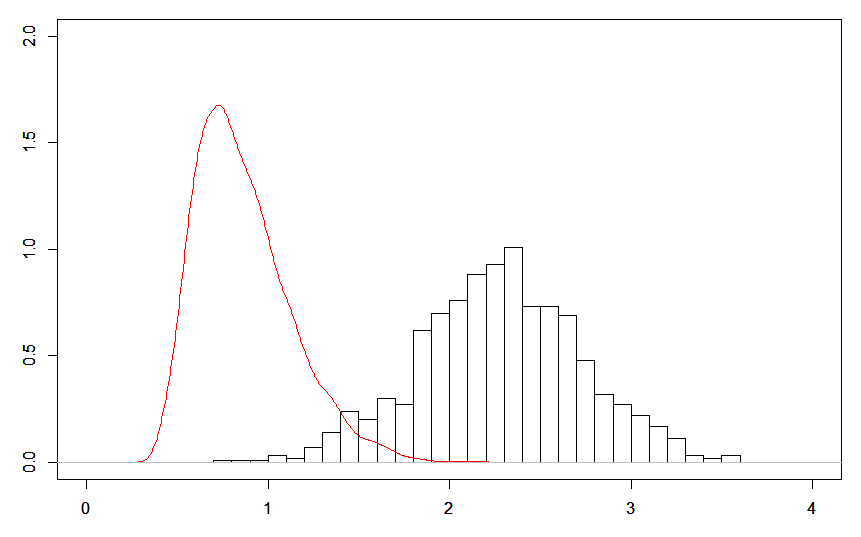}
  \subcaption{
  		Histogram of $\TUO$ with $n=10^7$.
  }
 \end{minipage}
 \begin{minipage}[t]{0.46\linewidth}
  \centering
  \includegraphics[keepaspectratio, width=67mm]
  {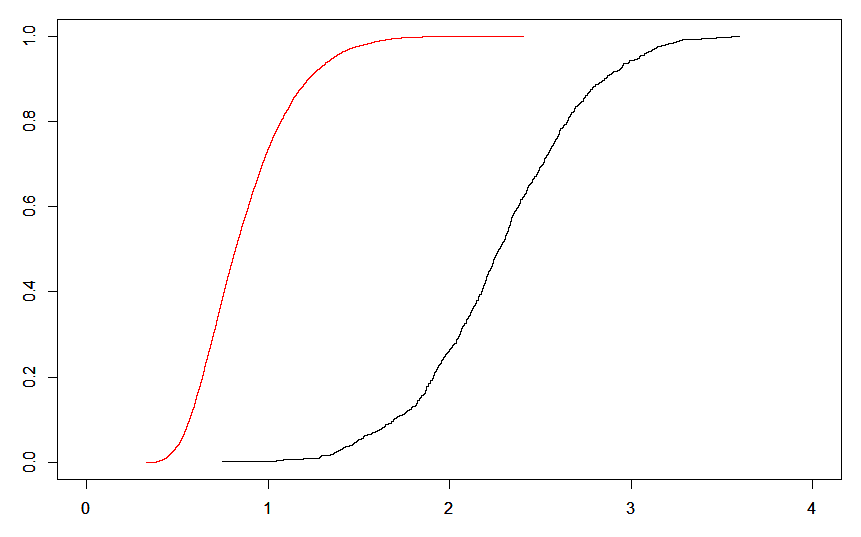}
  \subcaption{
  		EDF of $\TUO$ with $n=10^7$. 
  }
 \end{minipage}\\
 \begin{minipage}[t]{0.46\linewidth}
  \centering
  \includegraphics[keepaspectratio, width=67mm]
  {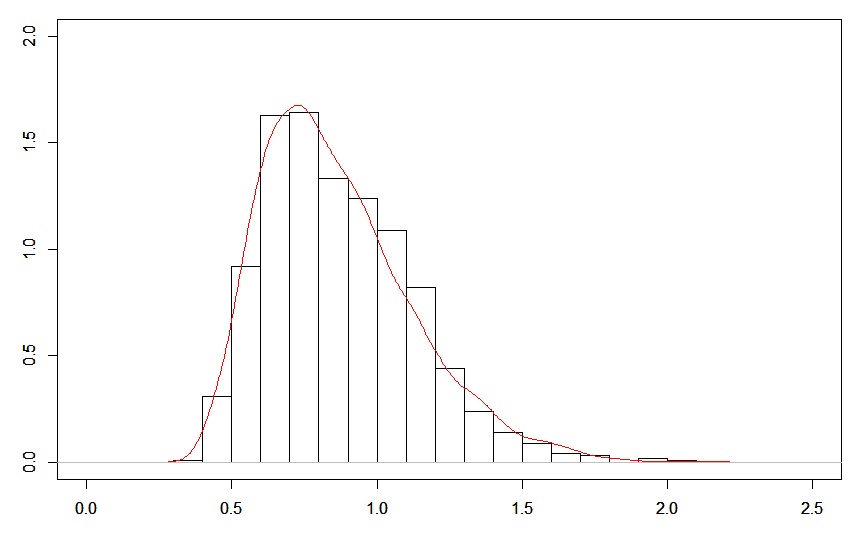}
  \subcaption{
  		Histogram of $\TOO$ with $n=10^7$.
  }
 \end{minipage}
 \begin{minipage}[t]{0.46\linewidth}
  \centering
  \includegraphics[keepaspectratio, width=67mm]
  {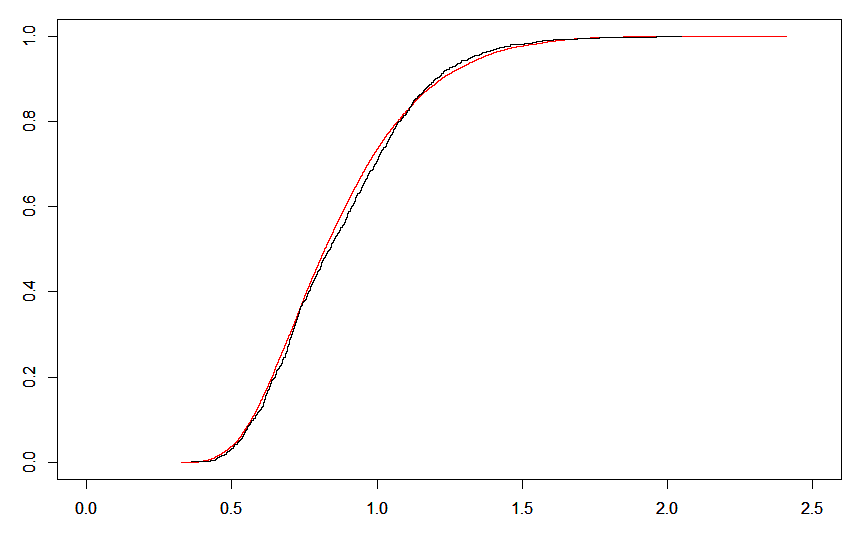}
  \subcaption{
  		EDF of $\TOO$ with $n=10^7$. 
  }
 \end{minipage}
 \caption{
 		Histogram (black line) versus theoretical density function (red line) 
 		and
 		empirical distribution function (black line) versus 
 		theoretical distribution function (red line) in (ii).
 }
 \label{model1ii_test}
\end{figure}

On the other hand, 
according to Table \ref{tab2} and (A)-(D) of Figure \ref{model1ii_test},
it can be seen that
the proportions of the test statistics $\TUO$ and $\TUT$ that exceed the critical value 
approach $1.000$ as $n$ increases, 
and the distribution of the test statistics diverges in (ii).
We can also see from (E)-(F) of Figure \ref{model1ii_test} 
that the distribution of the test statistic $\TOO$
% and $\TOT$ 
almost corresponds with the null distribution.
Therefore, we estimate the drift parameters before and after the change
by the same procedure as Steps (2)-(3),  
and also estimate the change point of the drift parameter
when the test statistic $\TUO$ exceeds the critical value. 
Here, we construct the estimators by looking for the intervals with no change point.
Specifically, we first test for changes in the drift parameter 
in $[0.25\underline{\tau}_nT,0.75\underline{\tau}_nT]$. 
If the change is detected, we construct 
$\hat{\boldsymbol\beta}_1$ from $[0,0.25\underline{\tau}_nT]$
and $\hat{\boldsymbol\beta}_2$ from $[0.75\underline{\tau}_nT,\underline{\tau}_nT]$.
If no change is detected, we next expand the test interval to 
$[0.125\underline{\tau}_nT,0.875\underline{\tau}_nT]$, 
$[0.0625\underline{\tau}_nT,0.9375\underline{\tau}_nT]$, 
and $[0.01\underline{\tau}_nT,0.99\underline{\tau}_nT]$ 
and if the change is detected in the expanded interval, 
we estimate $\boldsymbol\beta_1^*$ and $\boldsymbol\beta_2^*$ 
using the data in the intervals that are not used in the test.
%Here, we constructed the estimators of the parameters before and after the change 
%by looking for the interval where there is no change.
%Specifically, we constructed the estimators from
%$25$ percent, $12.5$ percent, $6.25$ percent, and $1$ percent
%of both sides of the interval $[0,\underline{\tau}_n T]$. 
%{\color{red} The numbers of times of what???} are $448$, $218$, $64$, and $54$ with $n=10^6$, 
%and $795$, $141$, $29$, and $16$ with $n=10^7$, respectively.
The results of these estimates are shown 
in Table \ref{tab4} and Figure \ref{model1ii_estimation}.
We can see that
the distribution of the estimator almost corresponds with 
the asymptotic distribution and the estimator has good performance
from Figure \ref{model1ii_estimation}. 
In this simulation, 
we considered the situation that the difference 
between $\gamma_1^*$ and $\gamma_2^*$ shrinks.
As we mentioned in Remark \ref{rmk11}, 
the change point can also be estimated when the difference is fixed.

%\clearpage

{\begin{table}[h]
\captionsetup{margin=5pt}
\caption{
Mean and standard deviation of the estimators in (ii). 
True values: $\beta^*=1$, $\gamma_2^*=2$, $\tau_*^{\boldsymbol\beta}=0.4$,  
$\gamma_1^*\approx1.7488$ and $1.8005$ for $n=10^6$ and $10^7$, respectively.
}
\begin{center}
\begin{tabular*}{1.0\textwidth}{@{\extracolsep{\fill}}cccccccc}\hline
$n$ & $T$ & $h$ & $\hat\beta_1$ & $\hat\gamma_1$ & $\hat\beta_2$ & 
$\hat\gamma_2$ & $\hat\tau_n^{\boldsymbol\beta}$
\rule[0mm]{0cm}{4mm}\\\hline 
$10^6$ & 758.58 & $7.59\times10^{-4}$ 
& 1.09318 & 1.73533 & 1.11949 & 2.00802 & 0.40789  \rule[0mm]{0cm}{4mm}\\ 
&&& (0.331866) & (0.14934) & (0.46379) & (0.17179) & (0.12373) \\
$10^7$ & 2290.87 & $2.29\times10^{-4}$ 
& 1.01099 & 1.79835 & 1.02131 & 2.00017 & 0.40229  \rule[0mm]{0cm}{4mm}\\ 
&&& (0.09569) & (0.05623) & (0.09199) & (0.06371) & (0.07034) \\\hline
\end{tabular*}
\label{tab4}
\end{center}
\end{table}

%%%%%%%%

\begin{figure}[h]
\captionsetup{margin=5pt}
\captionsetup[sub]{margin=5pt}
 \begin{minipage}[t]{0.46\linewidth}
  \centering
  \includegraphics[keepaspectratio, width=67mm]
  {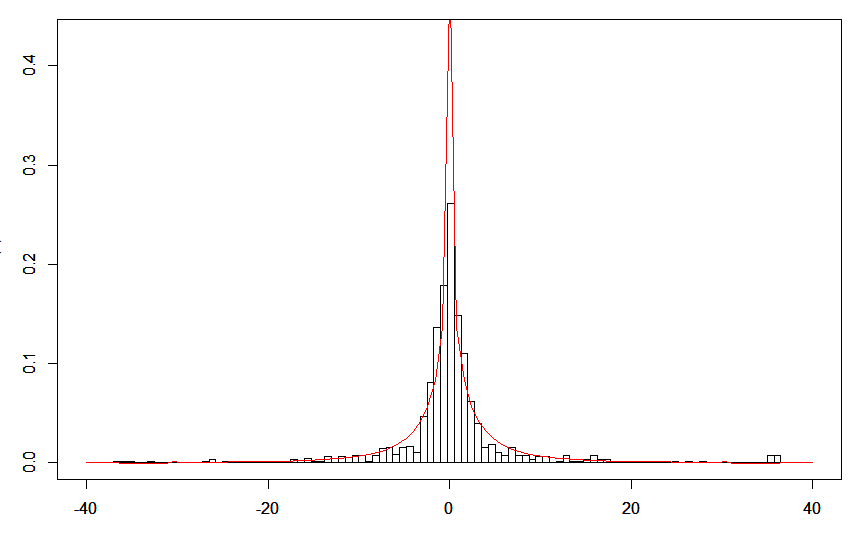}
  \subcaption{
  		Histogram of 
  		$T\Deb^2(\hat\tau_n^\beta-\tau_*^\beta)$.  
  }
 \end{minipage}
 \begin{minipage}[t]{0.46\linewidth}
  \centering
  \includegraphics[keepaspectratio, width=67mm]
  {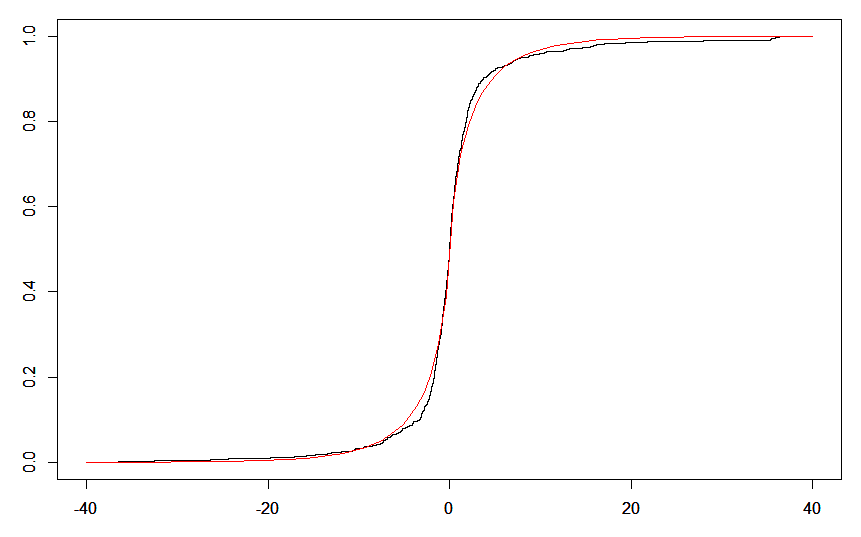}
  \subcaption{
  		EDF of 
  		$T\Deb^2(\hat\tau_n^\beta-\tau_*^\beta)$.  
  }
 \end{minipage}
 \caption{
 		Histogram (black line) versus theoretical density function (red line) 
 		and
 		empirical distribution function (black line) versus 
 		theoretical distribution function (red line) with $n=10^7$ in (ii).
 }
 \label{model1ii_estimation}
\end{figure}
}

%\clearpage

\begin{table}[h]
\captionsetup{margin=5pt}
\caption{
Mean and standard deviation of the estimators in (iii). 
True values: $\beta^*=1$, $\gamma_2^*=2$, 
$\gamma_1^*\approx1.74881$ and $1.80047$ 
for $n=10^6$ and $10^7$, respectively.
}
\begin{center}
\begin{tabular*}{1.0\textwidth}{@{\extracolsep{\fill}}ccccccc}\hline
$n$ & $T$ & $h$ & $\check\beta_1$ & $\check\gamma_1$ & $\check\beta_2$ & 
$\check\gamma_2$ 
\rule[0mm]{0cm}{4mm}\\\hline 
$10^6$ & 758.58 & $7.59\times10^{-4}$ 
& 1.00714 & 1.74722 & 1.02731 & 1.99986    \rule[0mm]{0cm}{4mm}\\ 
&&& (0.05874) & (0.04078) & (0.12144) & (0.09612)  \\
$10^7$ & 2290.87 & $2.29\times10^{-4}$ 
& 1.00215 & 1.80070 & 1.00840 & 1.99795    \rule[0mm]{0cm}{4mm}\\ 
&&& (0.03331) & (0.02423) & (0.06540) & (0.05432)  \\\hline
\end{tabular*}
\label{tab5}
\end{center}
\end{table}

%\clearpage

\begin{figure}[h]
\captionsetup{margin=5pt}
\captionsetup[sub]{margin=5pt}
 \begin{minipage}[t]{0.46\linewidth}
  \centering
  \includegraphics[keepaspectratio, width=67mm]{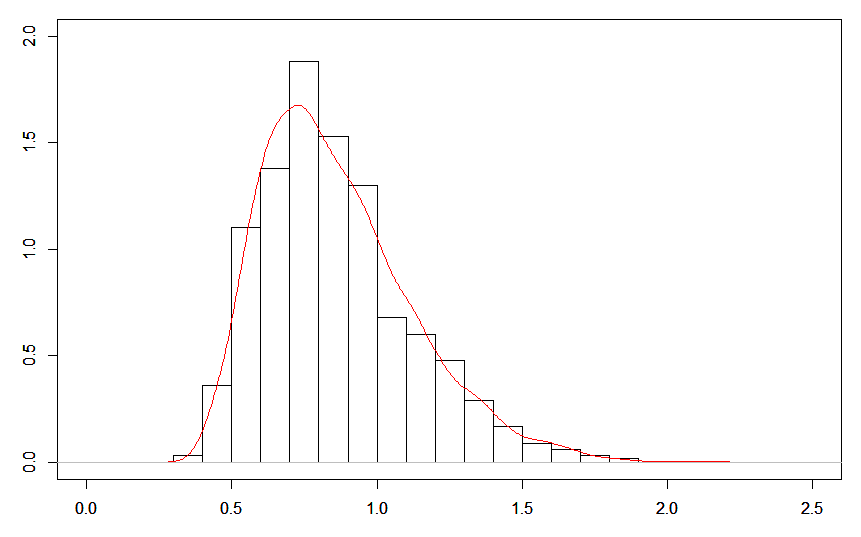}
  \subcaption{
  		Histogram of $\TUO$ with $n=10^7$.
  }
 \end{minipage}
 \begin{minipage}[t]{0.46\linewidth}
  \centering
  \includegraphics[keepaspectratio, width=67mm]{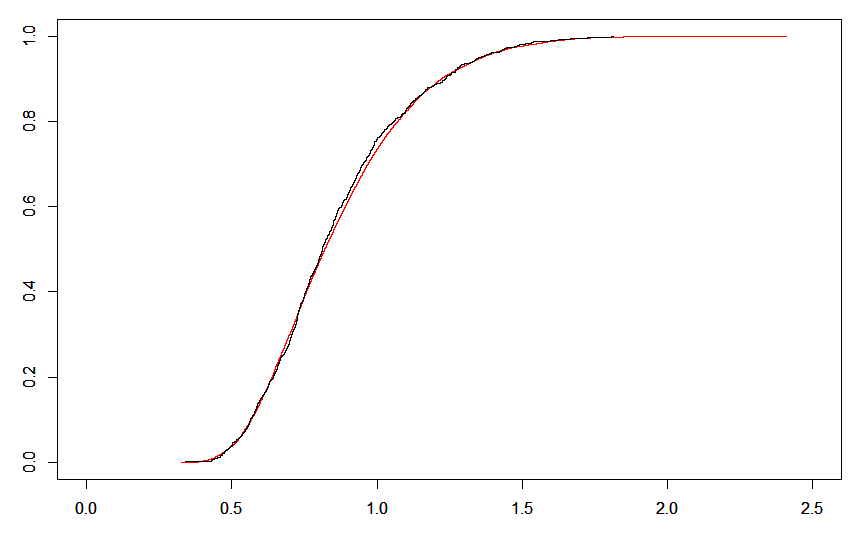}
  \subcaption{
  		EDF of $\TUO$ with $n=10^7$. 
  }
 \end{minipage}\\
 \begin{minipage}[t]{0.46\linewidth}
  \centering
  \includegraphics[keepaspectratio, width=67mm]{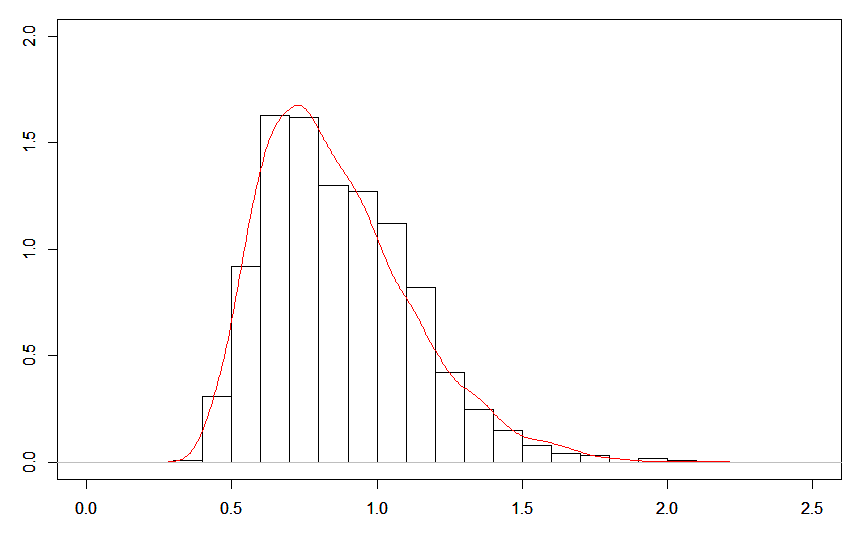}
  \subcaption{
  		Histogram of $\TOO$ with $n=10^7$.
  }
 \end{minipage}
 \begin{minipage}[t]{0.46\linewidth}
  \centering
  \includegraphics[keepaspectratio, width=67mm]{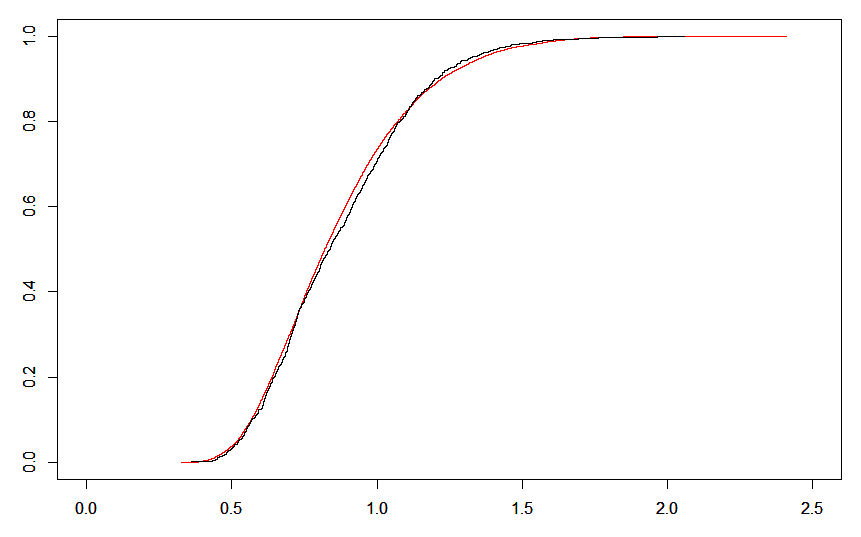}
  \subcaption{
  		EDF of $\TOO$ with $n=10^7$. 
  }
 \end{minipage}
 \caption{
 		Histogram (black line) versus theoretical density function (red line) 
 		and
 		empirical distribution function (black line) versus 
 		theoretical distribution function (red line) in (iii).
 }
 \label{model1iii_test}
\end{figure}

\clearpage

\begin{figure}[h]
\captionsetup{margin=5pt}
\captionsetup[sub]{margin=5pt}
 \begin{minipage}[t]{0.46\linewidth}
  \centering
  \includegraphics[keepaspectratio, width=67mm]
  {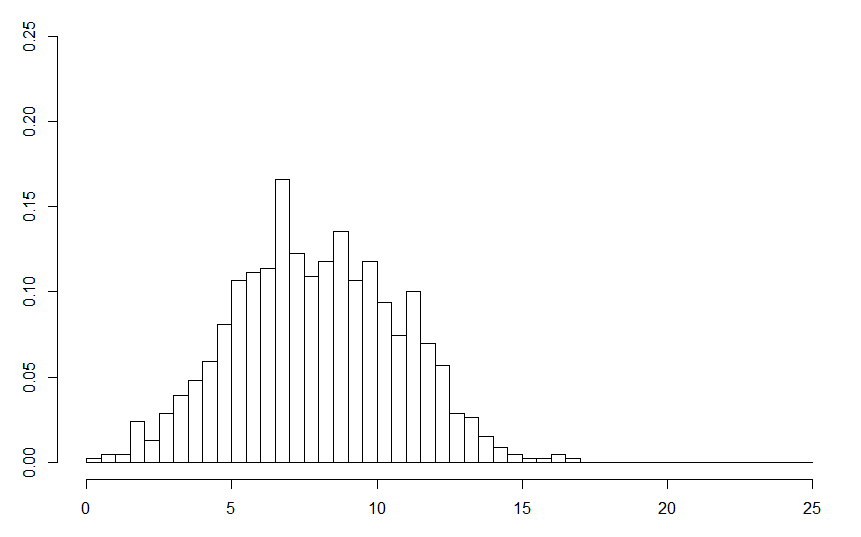}
  \subcaption{
  		$n=10^6$. 
  }
 \end{minipage}
 \begin{minipage}[t]{0.46\linewidth}
  \centering
  \includegraphics[keepaspectratio, width=67mm]
  {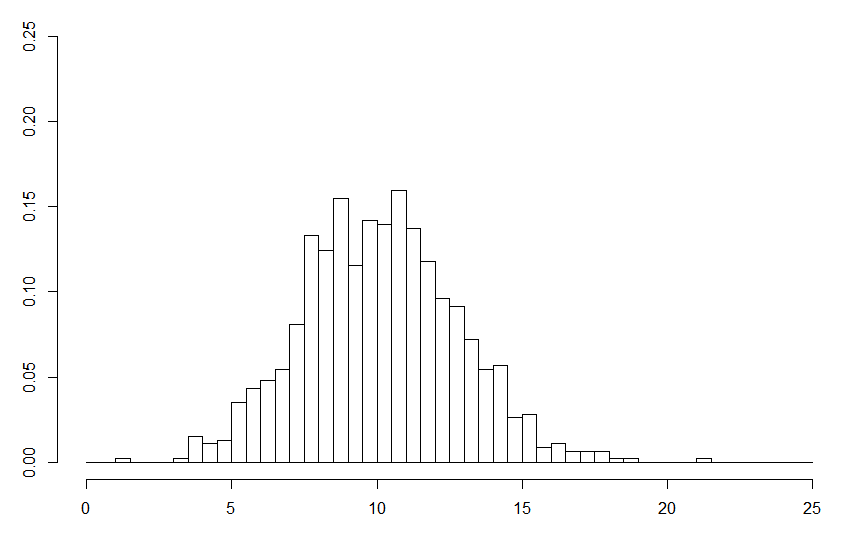}
  \subcaption{
  		$n=10^7$. 
  }
 \end{minipage}
 \caption{
  		Histogram of 
  		$\sqrt T\|\check{\boldsymbol\beta}_1-\check{\boldsymbol\beta}_2\|$  
 		in (iii).
 }
 \label{model1iii_test_M}
\end{figure}

%\clearpage

%%%%%%%%%%
\subsection{Model 2}
We consider the hyperbolic diffusion model defined by
\begin{align*}
\dd X_t=\biggl(\beta-\frac{\gamma X_t}{\sqrt{1+X_t^2}}\biggr)\dd t
+\alpha\dd W_t,\quad X_0=x_0,
\end{align*}
where $\alpha>0$, $\beta\in\mathbb R$ and $|\beta|<\gamma$.
In order to investigate the asymptotic performance of 
the test statistics and the estimator, 
we treat the following stochastic differential equation
\begin{align*}
X_t
=
\begin{cases}
\displaystyle
X_0+\int_0^t \biggl(\beta-\frac{\gamma X_s}{\sqrt{1+X_s^2}}\biggr)\dd s
+\alpha_1^* W_t, 
& t\in[0,\tau_*^\alpha T),\\
\displaystyle
X_{\tau_*^\alpha T}
+\int_{\tau_*^\alpha T}^t \biggl(\beta-\frac{\gamma X_s}{\sqrt{1+X_s^2}}\biggr)\dd s
+\alpha_2^* (W_t-W_{\tau_*^\alpha T}), & t\in[\tau_*^\alpha T,T],
\end{cases}
\end{align*}
where $x_0=1$, $\tau_*^\alpha=0.4$, $\alpha_1^*=1+n^{-0.36}$, $\alpha_2^*=1$, 
$\boldsymbol\beta=(\beta,\gamma)$.

We consider the following three situations.
\begin{enumerate}
\item[(i)]
The drift parameter $\boldsymbol\beta=(1,2)$
does not change over $[0,T]$.

\item[(ii)]
The drift parameter $\boldsymbol\beta$ changes 
from $\boldsymbol\beta_1^*=(1,2)$ to $\boldsymbol\beta_2^*=(0.5,2)$ 
at $\tau_*^{\boldsymbol\beta}=0.7$ (Case B). 

\item[(iii)]
The drift parameter $\boldsymbol\beta$ changes 
from $\boldsymbol\beta_1^*=(1,2)$ to $\boldsymbol\beta_2^*=(0.5,2)$ 
at $\tau_*^{\boldsymbol\beta}=\tau_*^\alpha=0.4$. 

\end{enumerate}
The number of iteration is 1000.
We set that the sample size of the data $\{\Xt\}_{i=0}^n$ is $n=10^6$ or $10^7$, 
$h=n^{-0.625}$, $T=nh=n^{0.375}$, $nh^2=n^{-0.25}$.

\begin{table}[h]
\captionsetup{margin=5pt}
\caption{
Mean and standard deviation of the estimators. 
True values: $\alpha_2^*=1$, $\tau_*^\alpha=0.4$, 
$\alpha_1^*\approx 1.00692$ and $1.00302$
for $n=10^6$ and $10^7$, respectively.
}
\begin{center}
\begin{tabular*}{1.0\textwidth}{@{\extracolsep{\fill}}ccccccc}\hline
$n$ & $T$ & $h$ && $\hat\alpha_1$ & $\hat\alpha_2$ & $\hat\tau_n^\alpha$ 
\rule[0mm]{0cm}{4mm}\\\hline 
$10^6$ & 177.83 & $1.78\times10^{-4}$ & 
(i)& 1.00702 & 1.00002 & 0.39858    \rule[0mm]{0cm}{4mm}\\ 
&&&& (0.00170) & (0.00172) & (0.06628)  \\
&&& 
(ii)& 1.00712 & 0.99992 & 0.39286    \rule[0mm]{0cm}{4mm}\\ 
&&&& (0.00176) & (0.00152) & (0.05996)  \\
&&& 
(iii)& 1.00702 & 1.00003 & 0.39922    \rule[0mm]{0cm}{4mm}\\ 
&&&& (0.00172) & (0.00173) & (0.06767)  \\\hline
$10^7$ & 421.70 & $4.22\times10^{-4}$ &
(i)& 1.00304 & 1.00001 & 0.39855   \rule[0mm]{0cm}{4mm}\\ 
&&&& (0.00045) & (0.00050) & (0.02576)  \\
&&& 
(ii)& 1.00304 & 1.00003 & 0.39871    \rule[0mm]{0cm}{4mm}\\ 
&&&& (0.00045) & (0.00047) & (0.02728)  \\
&&& 
(iii)& 1.00304 & 1.00002 & 0.39855    \rule[0mm]{0cm}{4mm}\\ 
&&&& (0.00045) & (0.00051) & (0.02635)  \\\hline
\end{tabular*}
\label{tab6}
\end{center}
\end{table}

We test for changes in the diffusion parameter in the interval $[0,T]$
in 1000 iterations. 
In all situations (i)-(iii), 
the change was detected $990$ times when $n=10^6$ and
$1000$ times when $n=10^7$.
If the change in the diffusion parameter is detected, 
the next step is to estimate the parameters $\alpha_1^*$ and $\alpha_2^*$ 
in the same way to estimate $\boldsymbol\beta_1^*$ and $\boldsymbol\beta_2^*$ 
in situation (ii) of model 1, 
and estimate $\tau_*^\alpha$ using the estimators 
$\hat\alpha_1$ and $\hat\alpha_2$.
%We constructed the estimators of the parameters before and after the change from 
%$25$ percent, $12.5$ percent, $6.25$ percent, and $1$ percent
%of both sides of the interval $[0,T]$ in the same way as above. 
%{\color{red} The numbers of iterations of what???} are $749$, $211$, $24$, and $6$ with $n=10^6$, 
%and $972$, $28$, $0$, and $0$ with $n=10^7$, respectively.
%Therefore, we estimate $\tau_*^\alpha$ 
%using the estimators $\hat\alpha_1$ and $\hat\alpha_2$.
The estimates of $\alpha_1^*$, $\alpha_2^*$ and $\tau_*^\alpha$ are shown 
in Table \ref{tab6}.
In this case, 
we chose $\epsilon_1=0.9+1.8\log_n|\hat\alpha_1-\hat\alpha_2|$ for all iterations. 
%$n^{-\epsilon_1}=1.995\times 10^{-3}, 7.079\times10^{-4}$ with $n=10^6,10^7$. 

%%%%%%%%%%%%%%%%%%%% (i) %%%%%%%%%%%%%%%%%%%%

Next, we test for changes in the drift parameter in the intervals 
$[0,\underline{\tau}_n T]$ and $[\overline{\tau}_n T,T]$. 
Table \ref{tab7} and 
Figures \ref{model2i_test}, \ref{model2ii_test} and \ref{model2iii_test}
show the simulation results of the tests for changes in the drift parameter.
In (i) and (iii), 
it can be seen that
the test statistics have good performance 
from Table \ref{tab7}, Figures \ref{model2i_test} and \ref{model2iii_test}.
Hence, in (i) and (iii), we construct
$\check{\boldsymbol\beta}_1$ and $\check{\boldsymbol\beta}_2$ 
from the intervals $[0,\underline{\tau}_n T]$ and $[\overline{\tau}_n T,T]$,
respectively 
when the test statistics $\TUO$ and $\TOO$ do not exceed the critical value, 
and investigate 
$\sqrt T\|\check{\boldsymbol\beta}_1-\check{\boldsymbol\beta}_2\|$. 
The results of the estimates of $\boldsymbol\beta_1^*$ and $\boldsymbol\beta_2^*$ 
in (i) and (iii) are summarized in 
Table \ref{tab8} and Figure \ref{model2i_test_M},
and Table \ref{tab10} and Figure \ref{model2iii_test_M}, respectively.
It can be seen that 
$\sqrt T\|\check{\boldsymbol\beta}_1-\check{\boldsymbol\beta}_2\|=O_p(1)$ in (i),
and $\sqrt T\|\check{\boldsymbol\beta}_1-\check{\boldsymbol\beta}_2\|$ 
diverges in (iii)
from Figures \ref{model2i_test_M} and \ref{model2iii_test_M}, respectively.

\begin{table}[h]
\captionsetup{margin=5pt}
\caption{
Proportions of the corresponding critical values exceeded.
}
\begin{center}
\begin{tabular*}{1.0\textwidth}{@{\extracolsep{\fill}}cccccccc}\hline
$n$ & $T$ & $h$ && $\TUO$ & $\TUT$ & $\TOO$ & $\TOT$ 
\rule[0mm]{0cm}{5mm}\\\hline 
$10^6$ & 177.83 & $1.78\times10^{-4}$  
& (i)
& 0.035 & 0.043 & 0.060 & 0.051  
\rule[0mm]{0cm}{4mm}\\ 
&&&& ($35/990$) & ($43/990$) & ($59/990$) & ($50/990$)  \\
&&& (ii)
& 0.034 & 0.060 & 0.510 & 0.414    \rule[0mm]{0cm}{4mm}\\ 
&&&& ($34/990$) & ($59/990$) & ($505/990$) & ($410/990$)  \\
&&& (iii)
& 0.043 & 0.071 & 0.052 & 0.065    \rule[0mm]{0cm}{4mm}\\ 
&&&& ($43/990$) & ($70/990$) & ($51/990$) & ($64/990$)  \\\hline
$10^7$ & 421.70 & $4.22\times10^{-4}$ 
& (i)
& 0.038 & 0.040 & 0.038 & 0.031    \rule[0mm]{0cm}{4mm}\\ 
&&& (ii)
& 0.040 & 0.040 & 0.941 & 0.887    \rule[0mm]{0cm}{4mm}\\ 
&&& (iii)
& 0.043 & 0.049 & 0.044 & 0.048    \rule[0mm]{0cm}{4mm}\\\hline
\end{tabular*}
\label{tab7}
\end{center}
\end{table}

\begin{figure}[h]
\captionsetup{margin=5pt}
\captionsetup[sub]{margin=5pt}
 \begin{minipage}[t]{0.46\linewidth}
  \centering
  \includegraphics[keepaspectratio, width=67mm]{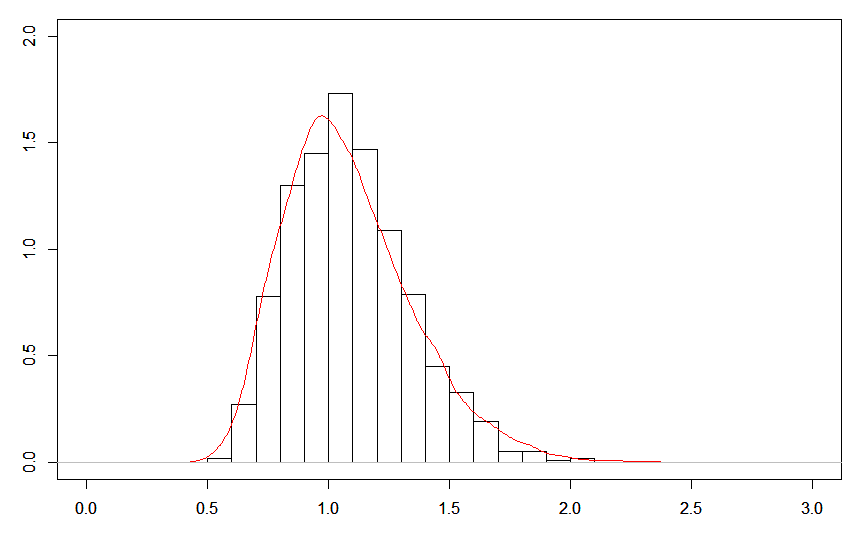}
  \subcaption{
  		Histogram of $\TUT$ with $n=10^7$.
  }
 \end{minipage}
 \begin{minipage}[t]{0.46\linewidth}
  \centering
  \includegraphics[keepaspectratio, width=67mm]{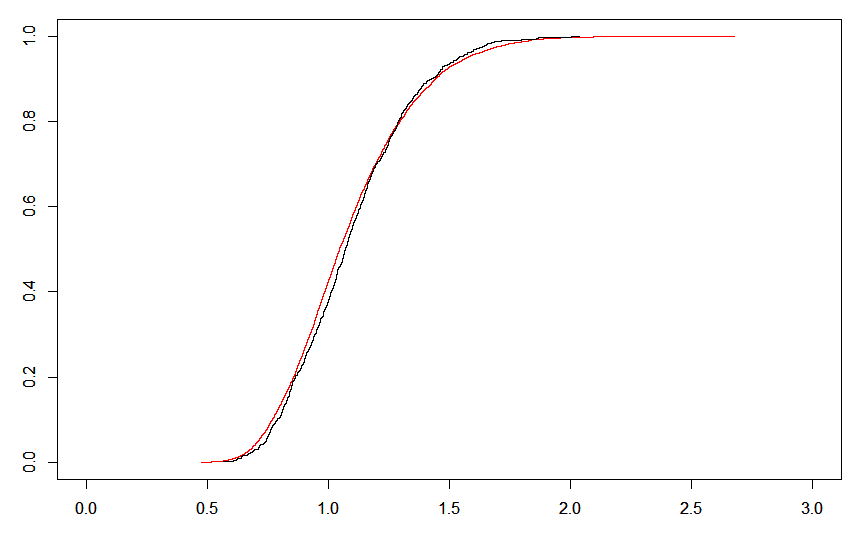}
  \subcaption{
  		EDF of $\TUT$ with $n=10^7$. 
  }
 \end{minipage}\\
 \begin{minipage}[t]{0.46\linewidth}
  \centering
  \includegraphics[keepaspectratio, width=67mm]{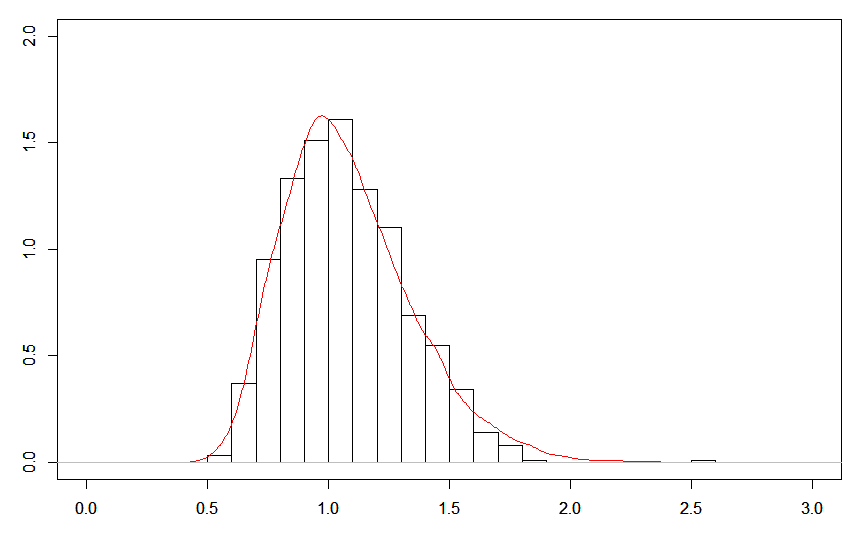}
  \subcaption{
  		Histogram of $\TOT$ with $n=10^7$.
  }
 \end{minipage}
 \begin{minipage}[t]{0.46\linewidth}
  \centering
  \includegraphics[keepaspectratio, width=67mm]{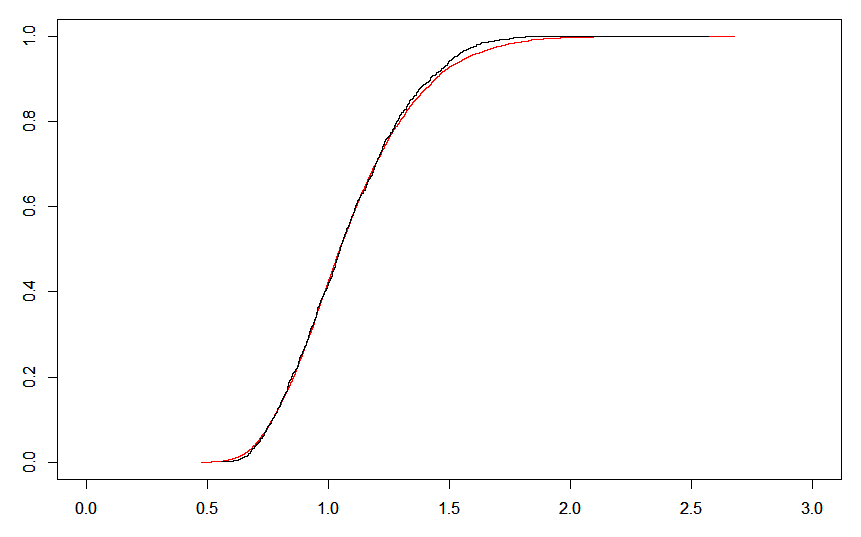}
  \subcaption{
  		EDF of $\TOT$ with $n=10^7$. 
  }
 \end{minipage}
 \caption{
 		Histogram (black line) versus theoretical density function (red line) 
 		and
 		empirical distribution function (black line) versus 
 		theoretical distribution function (red line) in (i).
 }
 \label{model2i_test}
\end{figure}

\begin{figure}[h]
\captionsetup{margin=5pt}
\captionsetup[sub]{margin=5pt}
 \begin{minipage}[t]{0.46\linewidth}
  \centering
  \includegraphics[keepaspectratio, width=67mm]
  {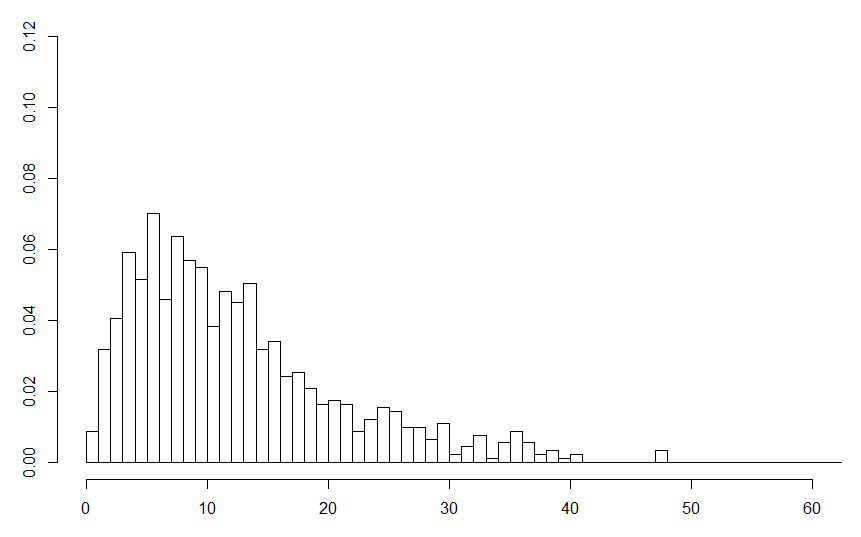}
  \subcaption{
  		$n=10^6$. 
  }
 \end{minipage}
 \begin{minipage}[t]{0.46\linewidth}
  \centering
  \includegraphics[keepaspectratio, width=67mm]
  {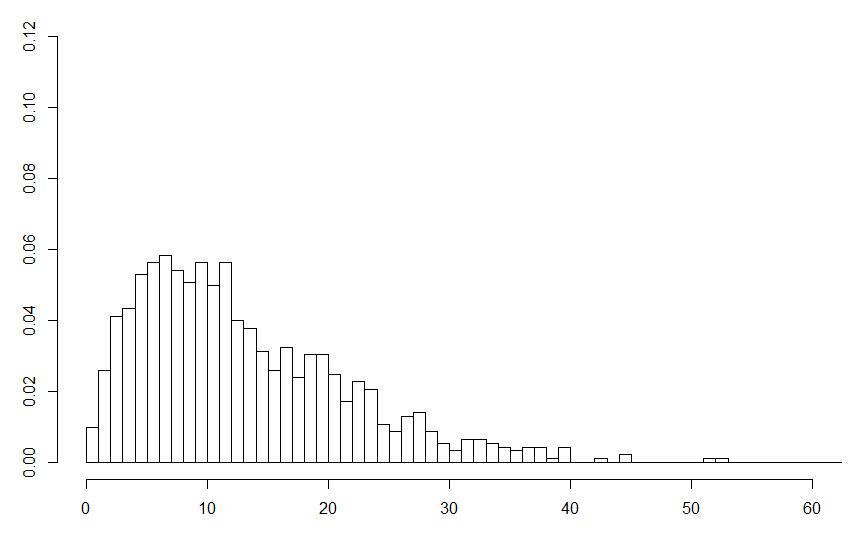}
  \subcaption{
  		$n=10^7$. 
  }
 \end{minipage}
 \caption{
  		Histogram of 
  		$\sqrt T\|\check{\boldsymbol\beta}_1-\check{\boldsymbol\beta}_2\|$  
 		in (i).
 }
 \label{model2i_test_M}
\end{figure}

\begin{table}[h]
\captionsetup{margin=5pt}
\caption{
Mean and standard deviation of the estimators in (i). 
True values: $\beta^*=1$, $\gamma^*=2$.
}
\begin{center}
\begin{tabular*}{1.0\textwidth}{@{\extracolsep{\fill}}ccccccc}\hline
$n$ & $T$ & $h$ & $\check\beta_1$ & $\check\gamma_1$ & $\check\beta_2$ & 
$\check\gamma_2$ 
\rule[0mm]{0cm}{4mm}\\\hline 
$10^6$ & 177.83 & $1.78\times10^{-4}$  
& 1.09928 & 2.18729 & 1.05088 & 2.09803    \rule[0mm]{0cm}{4mm}\\ 
&&& (0.53315) & (0.72917) & (0.34763) & (0.48807)  \\
$10^7$ & 421.70 & $4.22\times10^{-4}$ 
& 1.01788 & 2.04133 & 1.01649 & 2.03230    \rule[0mm]{0cm}{4mm}\\ 
&&& (0.13699) & (0.21138) & (0.11101) & (0.17223)  \\\hline
\end{tabular*}
\label{tab8}
\end{center}
\end{table}

From Table \ref{tab7} and (A)-(B) of Figure \ref{model2ii_test},
we see that in (ii), the distribution of the test statistic $\TUT$ 
almost corresponds with the null distribution.
Moreover, 
%{\color{red} from Table \ref{tab7} and (C)-(F) of Figure \ref{model2ii_test},
it can be seen from Table \ref{tab7} 
and (C)-(F) of Figure \ref{model2ii_test}
that
the proportions of the test statistics $\TOO$ and $\TOT$ that exceed the critical value 
approach $1.000$ as $n$ increases, 
and the distribution of the test statistics $\TOT$ diverges in (ii).
Therefore, in (ii), we estimate the drift parameters before and after the change, 
and also estimate the change point
when the test statistic $\TOO$ exceeds the critical value. 
Here, we construct the estimators 
$\hat{\boldsymbol\beta}_1$ and $\hat{\boldsymbol\beta}_2$ 
in the same way as in situation (ii) of model 1.
The results of these estimates are shown 
in Table \ref{tab9} and Figure \ref{model2ii_estimation}.
From Figure \ref{model2ii_estimation},
we can see that
the distribution of the estimator does not diverge when increasing 
from $n=10^6$ to $n=10^7$, which 
implies that the estimator has good performance.

%%%%%%%%

\begin{figure}[h]
\captionsetup{margin=5pt}
\captionsetup[sub]{margin=5pt}
 \begin{minipage}[t]{0.46\linewidth}
  \centering
  \includegraphics[keepaspectratio, width=67mm]
  {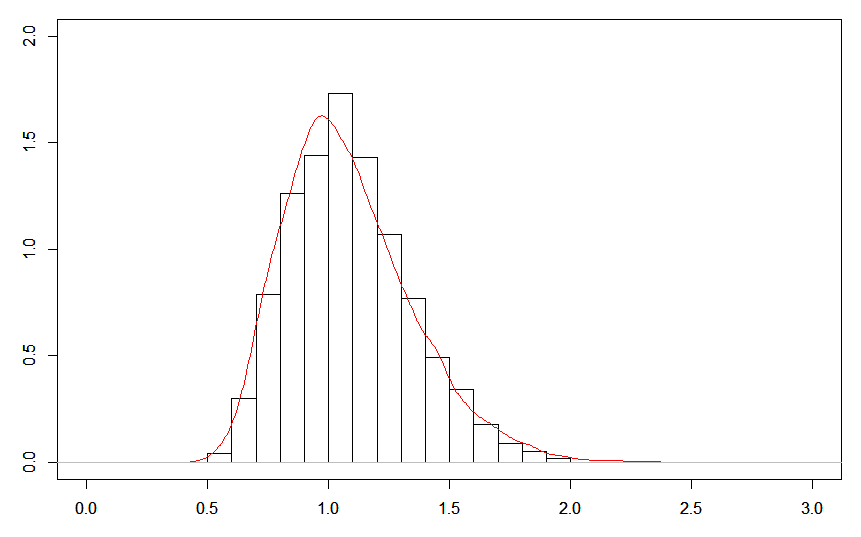}
  \subcaption{
  		Histogram of $\TUT$ with $n=10^7$.
  }
 \end{minipage}
 \begin{minipage}[t]{0.46\linewidth}
  \centering
  \includegraphics[keepaspectratio, width=67mm]
  {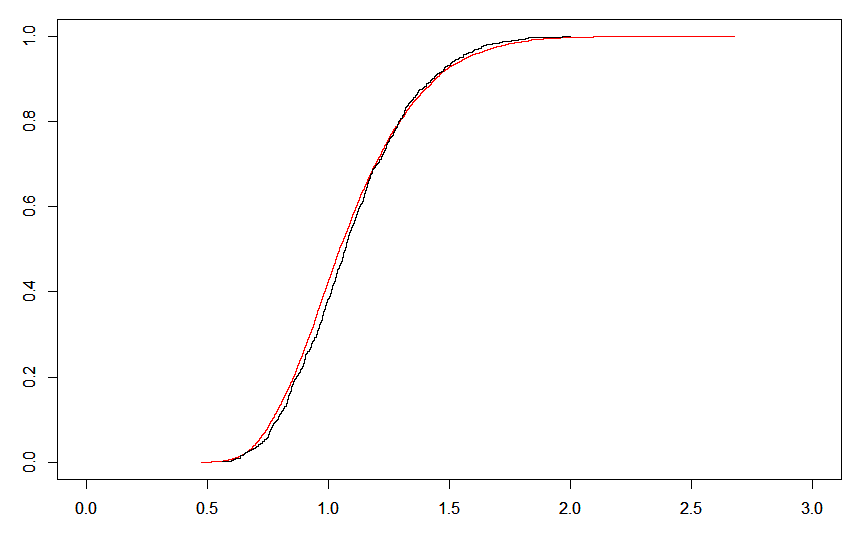}
  \subcaption{
  		EDF of $\TUT$ with $n=10^7$. 
  		%Empirical distribution function of $\TUO$ with $n=10^6$. 
  }
 \end{minipage}\\
 \begin{minipage}[t]{0.46\linewidth}
  \centering
  \includegraphics[keepaspectratio, width=67mm]
  {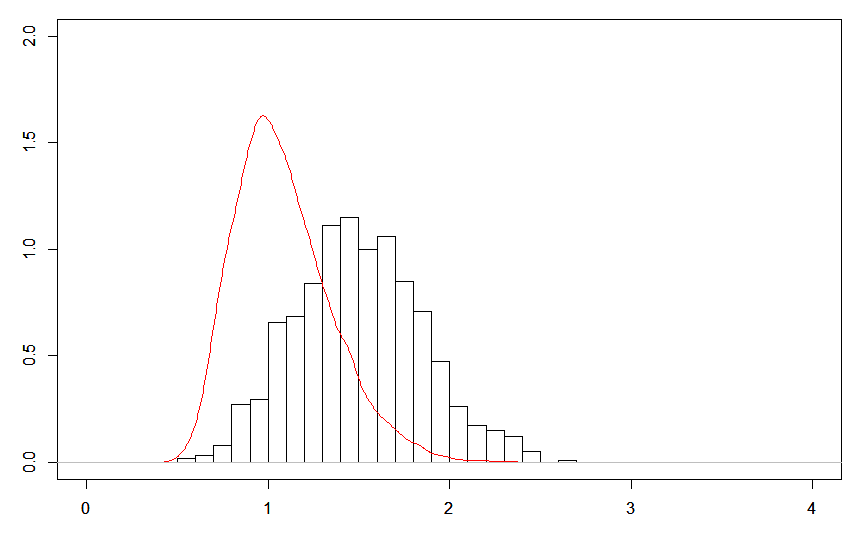}
  \subcaption{
  		Histogram of $\TOT$ with $n=10^6$.
  }
 \end{minipage}
 \begin{minipage}[t]{0.46\linewidth}
  \centering
  \includegraphics[keepaspectratio, width=67mm]
  {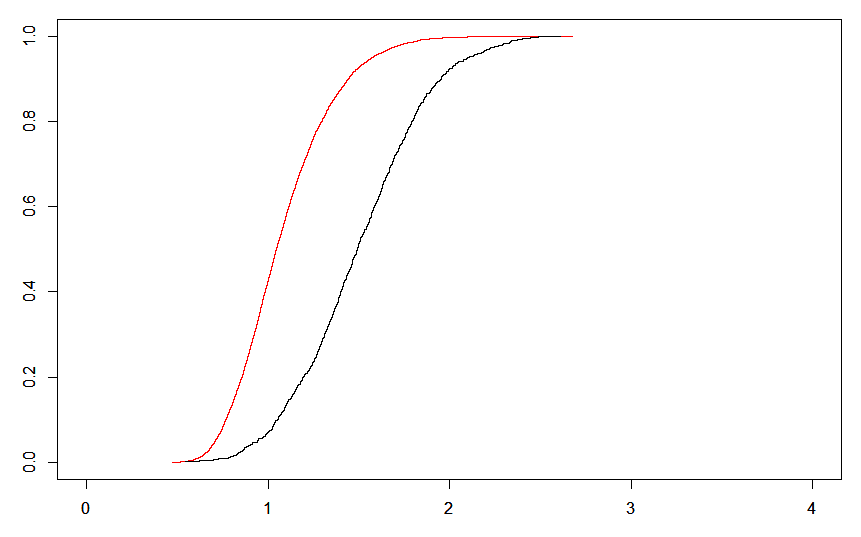}
  \subcaption{
  		EDF of $\TOT$ with $n=10^6$. 
  }
 \end{minipage}\\
 \begin{minipage}[t]{0.46\linewidth}
  \centering
  \includegraphics[keepaspectratio, width=67mm]
  {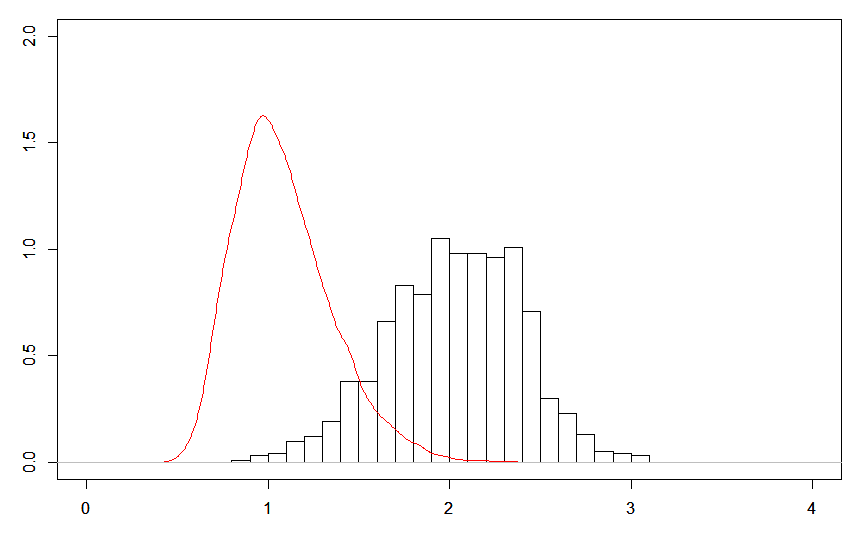}
  \subcaption{
  		Histogram of $\TOT$ with $n=10^7$.
  }
 \end{minipage}
 \begin{minipage}[t]{0.46\linewidth}
  \centering
  \includegraphics[keepaspectratio, width=67mm]
  {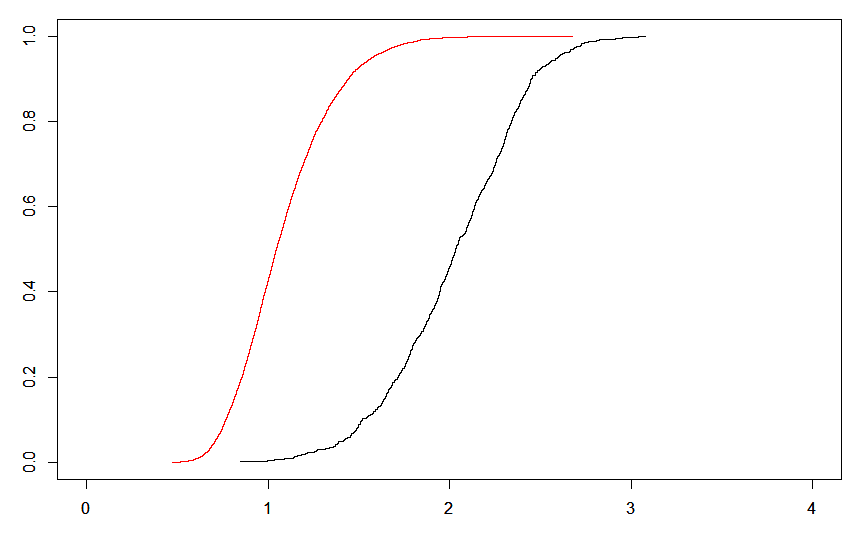}
  \subcaption{
  		EDF of $\TOT$ with $n=10^7$. 
  }
 \end{minipage}
 \caption{
 		Histogram (black line) versus theoretical density function (red line) 
 		and
 		empirical distribution function (black line) versus 
 		theoretical distribution function (red line) in (ii).
 }
 \label{model2ii_test}
\end{figure}

\begin{table}[h]
\captionsetup{margin=5pt}
\caption{
Mean and standard deviation of the estimators in (ii). 
True values: $\beta_1^*=1$, $\beta_2^*=0.5$, $\gamma^*=2$, 
$\tau_*^{\boldsymbol\beta}=0.7$.
}
\begin{center}
\begin{tabular*}{1.0\textwidth}{@{\extracolsep{\fill}}cccccccc}\hline
$n$ & $T$ & $h$ & $\hat\beta_1$ & $\hat\gamma_1$ & $\hat\beta_2$ & 
$\hat\gamma_2$ & $\hat\tau_n^{\boldsymbol\beta}$
\rule[0mm]{0cm}{4mm}\\\hline 
$10^6$ & 177.83 & $1.78\times10^{-4}$ 
& 1.72974 & 3.09513 & 0.52996 & 2.91408 & 0.70629  \rule[0mm]{0cm}{4mm}\\ 
&&& (1.81075) & (2.15126) & (0.84344) & (1.83324) & (0.13175) \\
$10^7$ & 421.70 & $4.22\times10^{-4}$ 
& 1.14360 & 2.25463 & 0.51434 & 2.11352 & 0.69866  \rule[0mm]{0cm}{4mm}\\ 
&&& (0.59646) & (0.82162) & (0.21015) & (0.45589) & (0.06982) \\\hline
\end{tabular*}
\label{tab9}
\end{center}
\end{table}

\begin{figure}[h]
\captionsetup{margin=5pt}
\captionsetup[sub]{margin=5pt}
 \begin{minipage}[t]{0.46\linewidth}
  \centering
  \includegraphics[keepaspectratio, width=67mm]
  {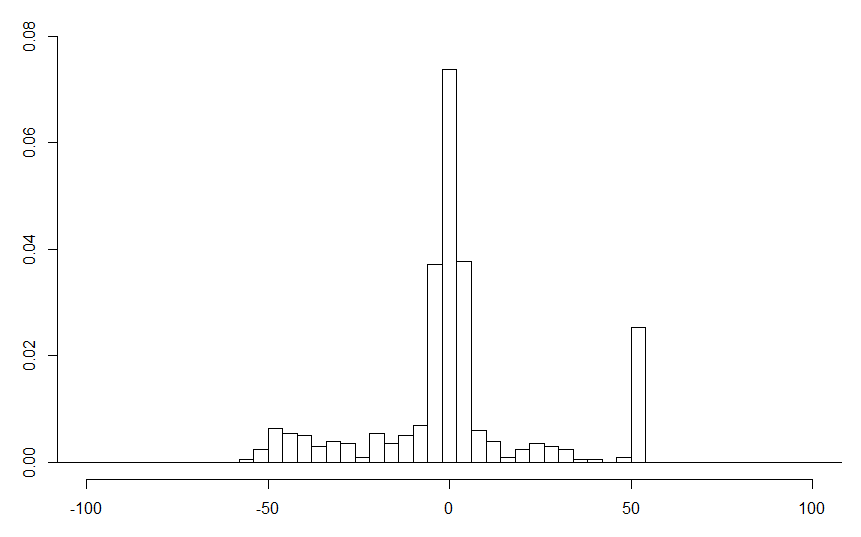}
  \subcaption{
  		$n=10^6$.
  }
 \end{minipage}
 \begin{minipage}[t]{0.46\linewidth}
  \centering
  \includegraphics[keepaspectratio, width=67mm]
  {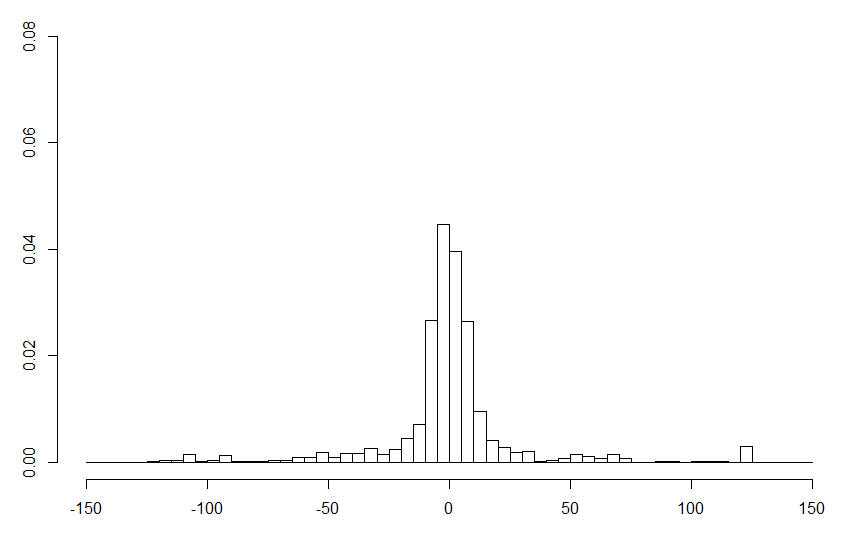}
  \subcaption{
  		$n=10^7$.
  }
 \end{minipage}
 \caption{
  		Histogram of 
  		$T(\hat\tau_n^\beta-\tau_*^\beta)$ in (ii).
 }
 \label{model2ii_estimation}
\end{figure}

\begin{table}[h]
\captionsetup{margin=5pt}
\caption{
Mean and standard deviation of the estimators in (iii). 
True values: $\beta_1^*=1$, $\beta_1^*=0.5$, $\gamma^*=2$.
}
\begin{center}
\begin{tabular*}{1.0\textwidth}{@{\extracolsep{\fill}}ccccccc}\hline
$n$ & $T$ & $h$ & $\check\beta_1$ & $\check\gamma_1$ & $\check\beta_2$ & 
$\check\gamma_2$ 
\rule[0mm]{0cm}{4mm}\\\hline 
$10^6$ & 177.83 & $1.78\times10^{-4}$  
& 1.07202 & 2.15235 & 0.52224 & 2.07371    \rule[0mm]{0cm}{4mm}\\ 
&&& (0.42817) & (0.61731) & (0.27310) & (0.47234)  \\
$10^7$ & 421.70 & $4.22\times10^{-4}$ 
& 1.01424 & 2.03951 & 0.50641 & 2.02274    \rule[0mm]{0cm}{4mm}\\ 
&&& (0.13744) & (0.21292) & (0.07692) & (0.15020)  \\\hline
\end{tabular*}
\label{tab10}
\end{center}
\end{table}

\clearpage

\begin{figure}[h]
\captionsetup{margin=5pt}
\captionsetup[sub]{margin=5pt}
 \begin{minipage}[t]{0.46\linewidth}
  \centering
  \includegraphics[keepaspectratio, width=67mm]
  {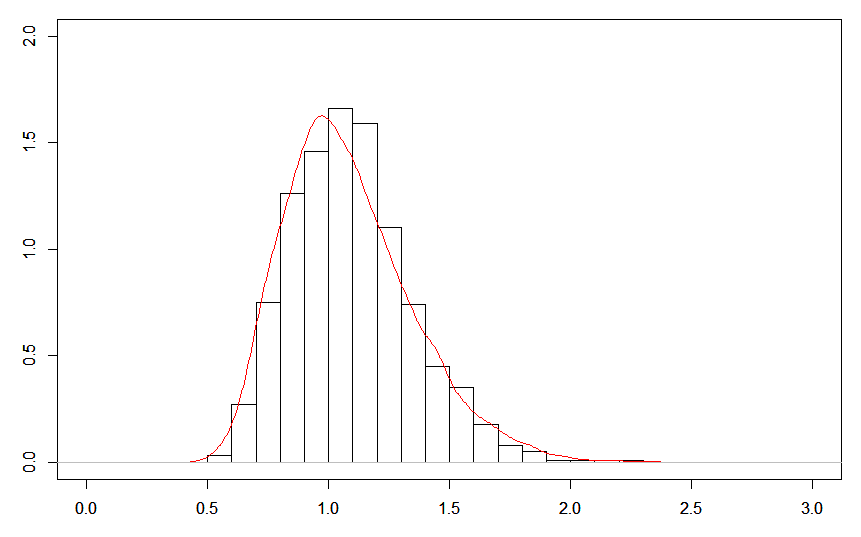}
  \subcaption{
  		Histogram of $\TUT$ with $n=10^7$.
  }
 \end{minipage}
 \begin{minipage}[t]{0.46\linewidth}
  \centering
  \includegraphics[keepaspectratio, width=67mm]
  {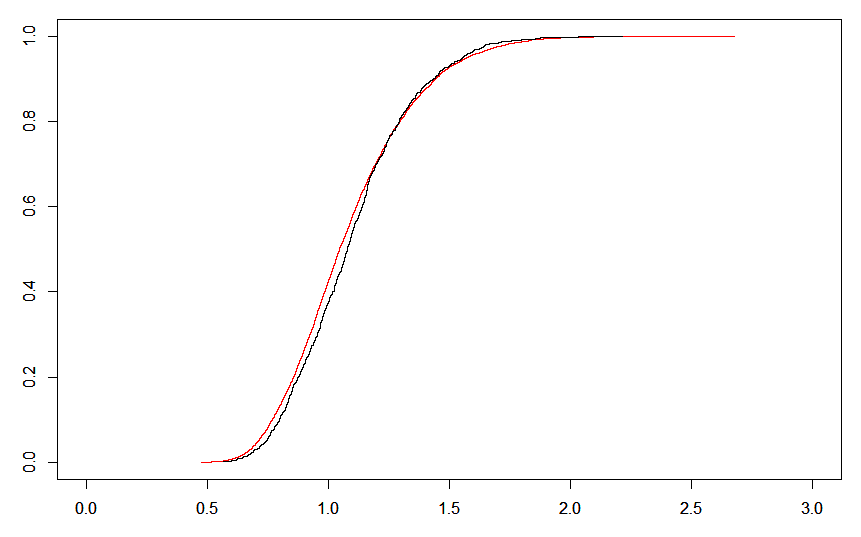}
  \subcaption{
  		EDF of $\TUT$ with $n=10^7$. 
  }
 \end{minipage}\\
 \begin{minipage}[t]{0.46\linewidth}
  \centering
  \includegraphics[keepaspectratio, width=67mm]
  {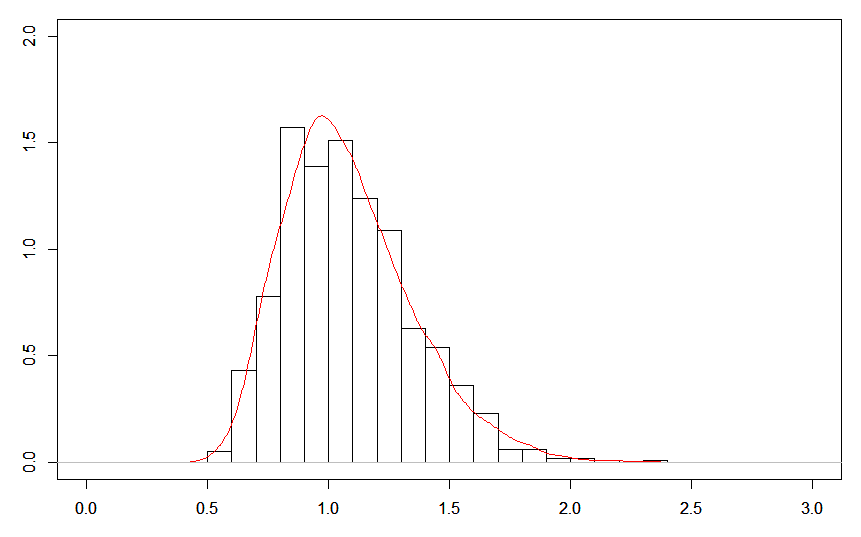}
  \subcaption{
  		Histogram of $\TOT$ with $n=10^7$.
  }
 \end{minipage}
 \begin{minipage}[t]{0.46\linewidth}
  \centering
  \includegraphics[keepaspectratio, width=67mm]
  {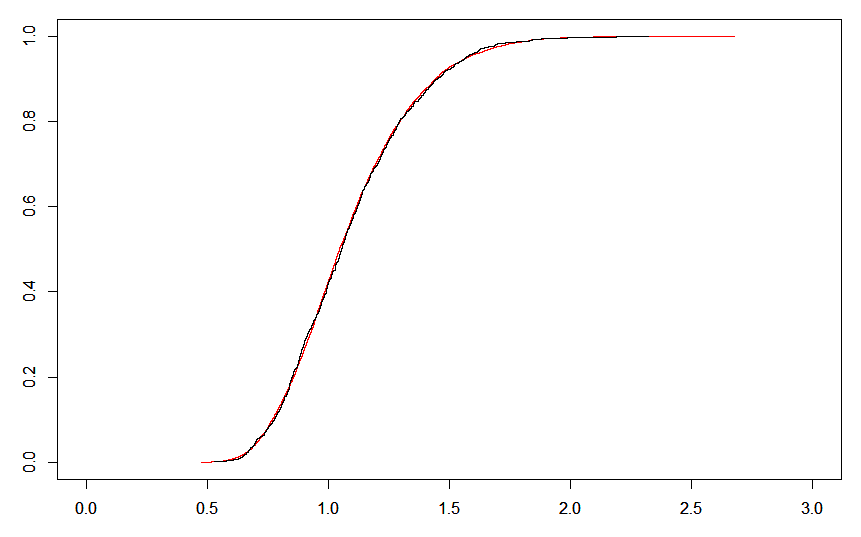}
  \subcaption{
  		EDF of $\TOT$ with $n=10^7$. 
  }
 \end{minipage}
 \caption{
 		Histogram (black line) versus theoretical density function (red line) 
 		and
 		empirical distribution function (black line) versus 
 		theoretical distribution function (red line) in (iii).
 }
 \label{model2iii_test}
\end{figure}

%\clearpage

\begin{figure}[h]
\captionsetup{margin=5pt}
\captionsetup[sub]{margin=5pt}
 \begin{minipage}[t]{0.46\linewidth}
  \centering
  \includegraphics[keepaspectratio, width=67mm]
  {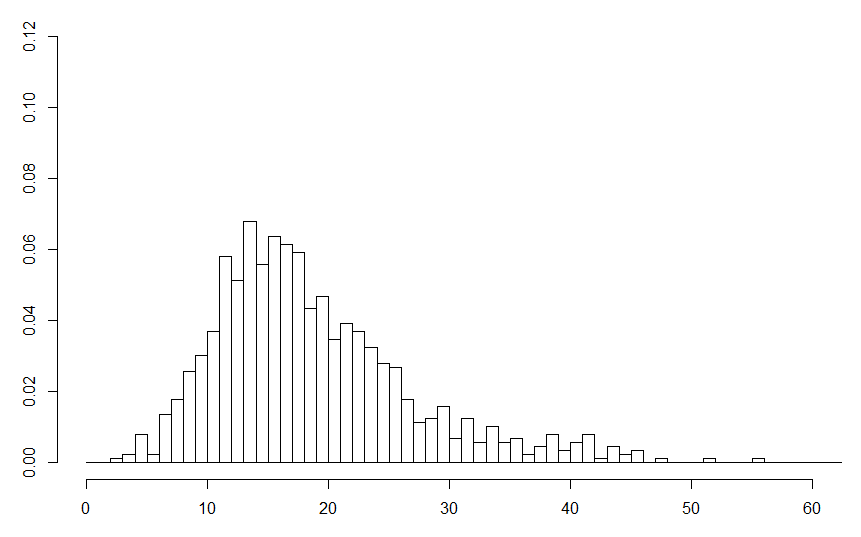}
  \subcaption{
  		$n=10^6$. 
  }
 \end{minipage}
 \begin{minipage}[t]{0.46\linewidth}
  \centering
  \includegraphics[keepaspectratio, width=67mm]
  {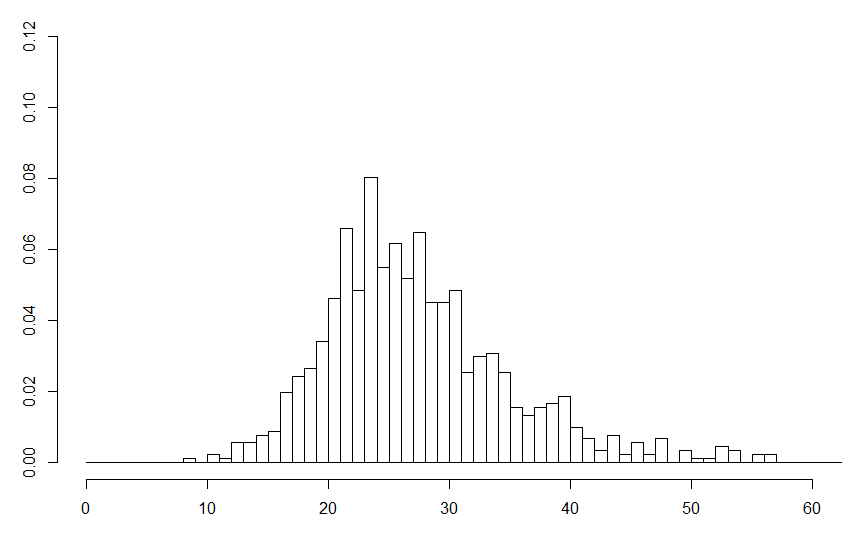}
  \subcaption{
  		$n=10^7$. 
  }
 \end{minipage}
 \caption{
  		Histogram of 
  		$\sqrt T\|\check{\boldsymbol\beta}_1-\check{\boldsymbol\beta}_2\|$  
 		in (iii).
 }
 \label{model2iii_test_M}
\end{figure}

%%%%%%%%%%%%%%%%%%%%%%%%%%%%%%%%%%%%%%%%

%\clearpage

\section{Proofs}\label{sec5}
We set the following notations.
\begin{enumerate}
\item[1.]
$\GG=\sigma\bigl[\{W_s\}_{s\le t_i^n}\bigr]$.

\item[2.]
$C$, $C_1$, $C_2,\ldots>0$ denote universal constants.

\item[3.]
For a measurable set $A$ and an integrable random variable $X$, we define 
\begin{align*}
\EE[X:A]=\int_AX(\omega)\dd P(\omega).
\end{align*}

\item[4.]
For a function $f$ on $\mathbb R^d\times \Theta$,
we define
$f_{i-1}(\theta)=f(\Xs,\theta)$.

\item[5.]
We define 
\begin{align*}
A\otimes x^{\otimes k}=\sum_{\ell_1,\ldots,\ell_k=1}^{d_1}
A^{\ell_1,\ldots,\ell_k}x^{\ell_1}\cdots x^{\ell_k},\quad
\text{ for }
A\in\underbrace{\mathbb R^{d_1}\otimes\cdots\otimes\mathbb R^{d_1}}_{k},\ x\in\mathbb R^{d_1}.
\end{align*} 

\end{enumerate}

%%%%%%%%%%%%%%% LEMMA %%%%%%%%%%%%%%%
\begin{lem}\label{lem1}
Let
$0\le\tau_1<\tau_2\le1$, where $\tau_1,\tau_2$ may depend on $n$.
Let 
$\{r_n\}_{n=1}^\infty$ be a sequence with
$r_n^2([n\tau_2]-[n\tau_1])h\lto0$,
and
$\{\mathcal M_i\}_{i=1}^n$ be a martingale with $\EE[\|\mathcal M_i\|^2]\le Ch$.
%%%
If $[n\tau_1]< k_n\le[n\tau_2]$ and $[n\tau_1]\le \ell_n<[n\tau_2]$
on $\Omega_n$ with $P(\Omega_n)\lto1$, then
\begin{align}\label{eq-Lemma1}
r_n\Biggl\|
\sum_{i=[n\tau_1]+1}^{k_n}\mathcal M_i
\Biggr\|
=o_p(1), 
\quad
r_n\Biggl\|
\sum_{i=\ell_n+1}^{[n\tau_2]}\mathcal M_i
\Biggr\|
=o_p(1).
\end{align}
\end{lem}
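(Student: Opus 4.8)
The plan is to dominate each random-endpoint sum by the maximum of the associated martingale over all \emph{deterministic} endpoints in the admissible range, and then to control that maximum by Doob's $L^2$-inequality. I read the hypothesis as saying that $\{\mathcal M_i\}$ is a square-integrable martingale difference array with respect to $\{\mathscr G_i^n\}$, i.e.\ $\EE[\mathcal M_i\mid\GG]=0$, so that the partial sums
\begin{align*}
S_k=\sum_{i=[n\tau_1]+1}^k\mathcal M_i
\qquad
([n\tau_1]\le k\le[n\tau_2]),
\end{align*}
with the convention $S_{[n\tau_1]}=0$, form a vector-valued martingale in $k$. By orthogonality of the increments,
\begin{align*}
\EE\bigl[\|S_{[n\tau_2]}\|^2\bigr]
=\sum_{i=[n\tau_1]+1}^{[n\tau_2]}\EE\bigl[\|\mathcal M_i\|^2\bigr]
\le C\bigl([n\tau_2]-[n\tau_1]\bigr)h.
\end{align*}

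First I would bound the maximum. Since $\|S_k\|$ is a nonnegative submartingale, Doob's maximal inequality in $L^2$ gives
\begin{align*}
\EE\Bigl[\max_{[n\tau_1]\le k\le[n\tau_2]}\|S_k\|^2\Bigr]
\le 4\,\EE\bigl[\|S_{[n\tau_2]}\|^2\bigr]
\le 4C\bigl([n\tau_2]-[n\tau_1]\bigr)h,
\end{align*}
and hence, by Chebyshev's inequality, for any $\delta>0$,
\begin{align*}
P\Bigl(r_n\max_{[n\tau_1]\le k\le[n\tau_2]}\|S_k\|>\delta\Bigr)
\le\frac{4C}{\delta^2}\,r_n^2\bigl([n\tau_2]-[n\tau_1]\bigr)h
\lto0
\end{align*}
by the assumption $r_n^2([n\tau_2]-[n\tau_1])h\lto0$. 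Thus $r_n\max_{[n\tau_1]\le k\le[n\tau_2]}\|S_k\|=o_p(1)$.

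To pass to the random indices, I note that on $\Omega_n$ we have $[n\tau_1]<k_n\le[n\tau_2]$, so $\bigl\|\sum_{i=[n\tau_1]+1}^{k_n}\mathcal M_i\bigr\|=\|S_{k_n}\|\le\max_{[n\tau_1]\le k\le[n\tau_2]}\|S_k\|$; for the second sum I would write $\sum_{i=\ell_n+1}^{[n\tau_2]}\mathcal M_i=S_{[n\tau_2]}-S_{\ell_n}$, whose norm is at most $2\max_{[n\tau_1]\le k\le[n\tau_2]}\|S_k\|$ on $\Omega_n$ since $[n\tau_1]\le\ell_n<[n\tau_2]$. As $P(\Omega_n)\lto1$, for any $\delta>0$,
\begin{align*}
P\Bigl(r_n\bigl\|\textstyle\sum_{i=[n\tau_1]+1}^{k_n}\mathcal M_i\bigr\|>\delta\Bigr)
\le P(\Omega_n^c)
+P\Bigl(r_n\max_{[n\tau_1]\le k\le[n\tau_2]}\|S_k\|>\delta\Bigr)
\lto0,
\end{align*}
and similarly for the backward sum, with $\delta$ replaced by $\delta/2$ in the maximal term. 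This yields both assertions in \eqref{eq-Lemma1}.

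The only genuine subtlety is the presence of the random, possibly data-dependent indices $k_n$ and $\ell_n$: this is exactly what rules out a naive pointwise second-moment estimate and forces the uniform (maximal-inequality) control, but it is resolved cleanly by the domination on $\Omega_n$ together with $P(\Omega_n)\lto1$. Everything else reduces to martingale orthogonality, Doob's inequality, and Chebyshev, so I do not anticipate any further obstacle.
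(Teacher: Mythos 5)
Your proof is correct and follows essentially the same route as the paper's: dominate the random-endpoint sums on $\Omega_n$ by the maximum over deterministic endpoints (writing the backward sum as $S_{[n\tau_2]}-S_{\ell_n}$, exactly as the paper does), bound the second moment of that maximum by $C([n\tau_2]-[n\tau_1])h$ via a maximal inequality, and finish with Chebyshev and $P(\Omega_n^{\mathrm c})\lto0$. The only cosmetic difference is that you invoke Doob's $L^2$ inequality plus orthogonality of the increments where the paper cites the Burkholder inequality; for second moments these give the same bound.
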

%%%%%%%%%%%%%%%%%%%%
\begin{proof}
Let
\begin{align*}
\mathcal S_n
=r_n\Biggl\|
\sum_{i=[n\tau_1]+1}^{k_n}\mathcal M_i
\Biggr\|.
\end{align*}
For all $\epsilon>0$, 
\begin{align}\label{eq-Lemma1-001}
P(\mathcal S_n>\epsilon)
\le
P(\mathcal S_n>\epsilon, \Omega_n)
+P(\Omega_n^{\mathrm c})
\le
\frac{1}{\epsilon^2}\EE[\mathcal S_n^2:\Omega_n]
+P(\Omega_n^{\mathrm c}).
\end{align}
From Burkholder inequality, we have
\begin{align}
\EE[\mathcal S_n^2:\Omega_n]
&\le
r_n^2
\EE\left[
\max_{[n\tau_1]< k\le[n\tau_2]}
\Biggl\|
\sum_{i=[n\tau_1]+1}^k\mathcal M_i
\Biggr\|^2
:\Omega_n\right]
\nonumber
\\
&\le
r_n^2
\EE\left[
\max_{[n\tau_1]< k\le[n\tau_2]}
\Biggl\|
\sum_{i=[n\tau_1]+1}^k\mathcal M_i
\Biggr\|^2
\right]
\nonumber
\\
&\le
Cr_n^2
\sum_{i=[n\tau_1]+1}^{[n\tau_2]}
\EE[\|\mathcal M_i\|^2]
\nonumber
\\
&=
O\bigl(r_n^2([n\tau_2]-[n\tau_1])h\bigr)
=o(1).
\label{eq-Lemma1-002}
\end{align}
Therefore, 
from \eqref{eq-Lemma1-001}, \eqref{eq-Lemma1-002} and 
$P(\Omega_n^{\mathrm c})\lto0$, 
we have the first part of \eqref{eq-Lemma1}.
According to
\begin{align*}
\Biggl\|
\sum_{i=\ell_n+1}^{[n\tau_2]}\mathcal M_i
\Biggr\|^2
&\le
2
\left(
\Biggl\|
\sum_{i=[n\tau_1]+1}^{[n\tau_2]}\mathcal M_i
\Biggr\|^2
+
\Biggl\|
\sum_{i=[n\tau_1]+1}^{\ell_n}\mathcal M_i
\Biggr\|^2
\right),
\end{align*}
the second part of \eqref{eq-Lemma1} is obtained in the same way.
\end{proof}
%%%%%%%%%%%%%%% END OF LEMMA %%%%%%%%%%%%%%%

Let
$\theta_k=(\alpha_k^*,\beta_k)$, 
$\tau_n^L=\tau_*^\alpha-2n^{-\epsilon_1}$,
$\tau_n^U=\tau_*^\alpha+2n^{-\epsilon_1}$,
$m_n=[n\tau_n^L]$ 
and
$M_n=[n\tau_n^U]$.

\begin{proof}[\bf{Proof of Theorem \ref{th1}}]
We first prove \eqref{eq-th1-1}. 
Let
\begin{align*}
\TLA
&=
\frac{1}{\sqrt{d\underline{\tau}_n T}}
\max_{1\le k\le m_n}
\left|
\sum_{i=1}^k\check\xi_{1,i}
-\frac{k}{[n\underline{\tau}_n]}\sum_{i=1}^{[n\underline{\tau}_n]}\check\xi_{1,i}
\right|,\\
\TUA
&=
\frac{1}{\sqrt{d\underline{\tau}_n T}}
\max_{1\le k\le [n\tau_*^\alpha]}
\left|
\sum_{i=1}^k\check\xi_{1,i}
-\frac{k}{[n\underline{\tau}_n]}\sum_{i=1}^{[n\underline{\tau}_n]}\check\xi_{1,i}
\right|
\end{align*}
and $D_n=\{n^{\epsilon_1}|\hat\tau_n^\alpha-\tau_*^\alpha|\le 1\}$.
Note that 
the probability of $D_n$ converges to one from \textbf{[E1]},
and
$m_n \le [n\underline{\tau}_n] \le [n\tau_*^\alpha]
\le [n\overline{\tau}_n] \le M_n$
on $D_n$. 
%$\tau_*^\alpha-\dfrac{2}{n^\epsilon} 
%\le\underline{\tau}_n=\hat\tau_n^\alpha-\dfrac{1}{n^\epsilon}
%\le \tau_*^\alpha$,
Since $\TLA\le\TUO\le\TUA$ on $D_n$,
if 
\begin{align}
&\TLA\dto\sup_{0\le s\le 1}|\boldsymbol B_1^0(s)|,
\label{eq-000-1}
\\
&\TUA\dto\sup_{0\le s\le 1}|\boldsymbol B_1^0(s)|,
\label{eq-000-2}
\end{align}
then we have
\begin{align*}
&\varlimsup_{n\to\infty}
P(\TUO\le x, D_n)
\le 
\varlimsup_{n\to\infty}
P(\TLA\le x, D_n)
\le 
\varlimsup_{n\to\infty}
P(\TLA\le x)
\le 
P\left(\sup_{0\le s\le 1}|\boldsymbol B_1^0(s)|\le x\right),
\\
&\varlimsup_{n\to\infty}
P(\TUO> x, D_n)
\le 
\varlimsup_{n\to\infty}
P(\TUA> x, D_n)
\le 
\varlimsup_{n\to\infty}
P(\TUA> x)
\le 
P\left(\sup_{0\le s\le 1}|\boldsymbol B_1^0(s)|> x\right).
\end{align*}
Since 
\begin{align*}
\varlimsup_{n\to\infty}
P(\{\TUO> x\}\cup D_n^{\mathrm c})
\le
\varlimsup_{n\to\infty}
P(\TUO> x, D_n),
\end{align*}
we see
\begin{align*}
P\left(\sup_{0\le s\le 1}|\boldsymbol B_1^0(s)|\le x\right)
&\le
\varliminf_{n\to\infty}
P(\TUO\le x, D_n)\\
&\le
\varlimsup_{n\to\infty}
P(\TUO\le x, D_n)
\le
P\left(\sup_{0\le s\le 1}|\boldsymbol B_1^0(s)|\le x\right),
\end{align*}
i.e.,
\begin{align}\label{eq-000-3}
\lim_{n\to\infty}
P(\TUO\le x, D_n)
=
P\left(\sup_{0\le s\le 1}|\boldsymbol B_1^0(s)|\le x\right).
\end{align}
Hence, from \eqref{eq-000-3}, $P(D_n)\lto1$ and
$P(\TUO\le x)
=
P(\TUO\le x, D_n)
+P(\TUO\le x, D_n^{\mathrm c})$, 
we obtain
\begin{align*}
\TUO\dto\sup_{0\le s\le 1}|\boldsymbol B_1^0(s)|.
\end{align*}
From the above, it suffices to show \eqref{eq-000-1} and \eqref{eq-000-2}.

%%%%%%%%%%%%%%%%%%%%%%%%%

\textit{Proof of \eqref{eq-000-1}. }
We have 
\begin{align}
&\sum_{i=1}^k\check\xi_{1,i}
-\frac{k}{[n\underline{\tau}_n]}\sum_{i=1}^{[n\underline{\tau}_n]}\check\xi_{1,i}
\nonumber
\\
&=
\sum_{i=1}^k\check\xi_{1,i}
-\frac{k}{m_n}\sum_{i=1}^{m_n}\check\xi_{1,i}
+\frac{k}{m_n}\left(1-\frac{m_n}{[n\underline{\tau}_n]}\right)
\sum_{i=1}^{m_n}\check\xi_{1,i}
-\frac{k}{[n\underline{\tau}_n]}\sum_{i=m_n+1}^{[n\underline{\tau}_n]}\check\xi_{1,i}.
\label{eq-000-4}%%%%%
\end{align}
Let 
$\xi_{k,i}=1_d^\TT a_{i-1}^{-1}(\alpha_k^*)(\DeX-hb_{i-1}(\beta_k))$,
$\mathcal M_{k,i}=\xi_{k,i}-\EE_{\theta_k}[\xi_{k,i}|\GG]$.
Here, noting that 
\begin{align*}
\check\xi_{k,i}
&=1_d^\TT a_{i-1}^{-1}(\hat\alpha_k)(\DeX-hb_{i-1}(\hat\beta_k))\\
%%%%%
&=1_d^\TT a_{i-1}^{-1}(\alpha_k^*)(\DeX-hb_{i-1}(\hat\beta_k))
\\
&\qquad+
\frac{1}{\sqrt n}\int_0^1\left.
\partial_{\alpha}\left(1_d^\TT a_{i-1}^{-1}(\alpha)(\DeX-hb_{i-1}(\hat\beta_k))\right)
\right|_{\alpha=\alpha_k^*+u(\hat\alpha_k-\alpha_k^*)}
\dd u
\sqrt n(\hat\alpha_k-\alpha_k^*)\\
%%%%%
&=1_d^\TT a_{i-1}^{-1}(\alpha_k^*)(\DeX-hb_{i-1}(\beta_k))
-h 1_d^\TT a_{i-1}^{-1}(\alpha_k^*)(b_{i-1}(\hat\beta_k)-b_{i-1}(\beta_k))\\
&\qquad+
\frac{1}{\sqrt n}\int_0^1\left.
\partial_{\alpha}\left(1_d^\TT a_{i-1}^{-1}(\alpha)(\DeX-hb_{i-1}(\hat\beta_k))\right)
\right|_{\alpha=\alpha_k^*+u(\hat\alpha_k-\alpha_k^*)}
\dd u
\sqrt n(\hat\alpha_k-\alpha_k^*)\\
%%%%%
&=
\xi_{k,i}
-\frac{h}{\sqrt T} 
\int_0^1 \partial_{\beta}
\left.
\Bigl(
1_d^\TT a_{i-1}^{-1}(\alpha_k^*) (b_{i-1}(\beta)-b_{i-1}(\beta_k))
\Bigr)
\right|_{\beta=\beta_k+u(\hat\beta_k-\beta_k)}
\dd u
\sqrt T(\hat\beta_k-\beta_k)\\
&\qquad+
\frac{1}{\sqrt n}\int_0^1\left.
\partial_{\alpha}\left(1_d^\TT a_{i-1}^{-1}(\alpha)(\DeX-hb_{i-1}(\hat\beta_k))\right)
\right|_{\alpha=\alpha_k^*+u(\hat\alpha_k-\alpha_k^*)}
\dd u
\sqrt n(\hat\alpha_k-\alpha_k^*)
\\
%%%%%
&=
\mathcal M_{k,i}+\EE_{\theta_k}[\xi_{k,i}|\GG]
+O_p\Biggl(\sqrt{\frac{h}{n}}\Biggr)
\\
&=
\mathcal M_{k,i}
+O_p\Biggl(\sqrt{\frac{h}{n}}\Biggr),
\end{align*}
we have
\begin{align}
&\frac{1}{\sqrt{T}}
\max_{1\le k \le m_n}
\left|
\frac{k}{m_n}\left(1-\frac{m_n}{[n\underline{\tau}_n]}\right)
\sum_{i=1}^{m_n}\check\xi_{1,i}
\right|
\nonumber
\\
&\le
\frac{1}{\sqrt{T}}
\frac{|[n\underline{\tau}_n]-m_n|}{[n\underline{\tau}_n]}
\left|
\sum_{i=1}^{m_n}\check\xi_{1,i}
\right|
\nonumber
\\
&\le
\frac{1}{\sqrt{T}}
\frac{|[n\underline{\tau}_n]-m_n|}{[n\underline{\tau}_n]}
\left(
\left|
\sum_{i=1}^{m_n}\mathcal M_{1,i}
\right|
+
m_n O_p\Biggl(\sqrt{\frac{h}{n}}\Biggr)
\right)
\nonumber
\\
&=
\frac{m_n}{[n\underline{\tau}_n]}
n^{\epsilon_1-1}|[n\underline{\tau}_n]-m_n|
\left(
\frac{n^{1-\epsilon_1}}{\sqrt{T}m_n}
\left|
\sum_{i=1}^{m_n}\mathcal M_{1,i}
\right|
+
\frac{n^{1-\epsilon_1}}{\sqrt T}
O_p\Biggl(\sqrt{\frac{h}{n}}\Biggr)
\right).
\label{eq-000-5}%%%%%
\end{align}
%%%%%%%%%%%%%%%%%%%%
Since
$\dfrac{m_n}{[n\underline{\tau}_n]}n^{\epsilon_1-1}|[n\underline{\tau}_n]-m_n|=O_p(1)$,
$\dfrac{n^{1-\epsilon_1}}{\sqrt T }\sqrt{\dfrac{h}{n}}=n^{-\epsilon_1}\lto0$, 
$\EE_{\theta_1}[\mathcal M_{1,i}^2]\le Ch$
and
\begin{align*}
\frac{n^{2-2\epsilon_1}m_nh}{Tm_n^2}
=O(n^{-2\epsilon_1})
=o(1),
\end{align*}
we see
\begin{align}\label{eq-000-6}%%%%%
&\frac{1}{\sqrt{T}}
\max_{1\le k \le m_n}
\left|
\frac{k}{m_n}\left(1-\frac{m_n}{[n\underline{\tau}_n]}\right)
\sum_{i=1}^{m_n}\check\xi_{1,i}
\right|
=o_p(1)
\end{align}
from Lemma \ref{lem1} and \eqref{eq-000-5}.
%%%%%%%%%%%%%%%%%%%%
In the same way as \eqref{eq-000-5}, we have 
\begin{align*}
\frac{1}{\sqrt{T}}
\max_{1\le k\le m_n}
\left|
\frac{k}{[n\underline{\tau}_n]}\sum_{i=m_n+1}^{[n\underline{\tau}_n]}\check\xi_{1,i}
\right|
\le
\frac{m_n}{[n\underline{\tau}_n]}
\frac{1}{\sqrt{T}}
\left|
\sum_{i=m_n+1}^{[n\underline{\tau}_n]}\mathcal M_{1,i}
\right|
+o_p(1).
\end{align*}
%%%%%%%%%%%%%%%%%%%%
Since $[n\underline{\tau}_n]\le [n\tau_*^\alpha]$ on $D_n$ and
\begin{align*}
\frac{([n\tau_*^\alpha]-m_n)h}{T}
=O(n^{-\epsilon_1})
=o(1),
\end{align*}
we obtain
\begin{align}\label{eq-000-7}%%%%%
\frac{1}{\sqrt{T}}
\max_{1\le k\le m_n}
\left|
\frac{k}{[n\underline{\tau}_n]}\sum_{i=m_n+1}^{[n\underline{\tau}_n]}\check\xi_{1,i}
\right|
=o_p(1)
\end{align}
from Lemma \ref{lem1}.
%%%%%%%%%%%%%%%%%%%%
According to \eqref{eq-000-4}, \eqref{eq-000-6} and \eqref{eq-000-7}, 
we can express that
\begin{align*}
\TLA
&=
\frac{1}{\sqrt{d\underline{\tau}_n T}}
\max_{1\le k\le m_n}
\left|
\sum_{i=1}^k\check\xi_{1,i}
-\frac{k}{[n\underline{\tau}_n]}\sum_{i=1}^{[n\underline{\tau}_n]}\check\xi_{1,i}
\right|
\\
&=
\frac{1}{\sqrt{d\underline{\tau}_n T}}
\max_{1\le k\le m_n}
\left|
\sum_{i=1}^k\check\xi_{1,i}
-\frac{k}{m_n}\sum_{i=1}^{m_n}\check\xi_{1,i}
\right|
+o_p(1).
\end{align*}
In the same proof as Theorem 2 of Tonaki et al. (2020),
%\cite{TKU2020}, 
we obtain
\begin{align*}
&\frac{1}{\sqrt{d\underline{\tau}_n T}}
\max_{1\le k\le m_n}
\left|
\sum_{i=1}^k\check\xi_{1,i}
-\frac{k}{m_n}\sum_{i=1}^{m_n}\check\xi_{1,i}
\right|
\\
&=
\sqrt{\frac{\tau_n^L}{\underline{\tau}_n}}
\frac{1}{\sqrt{d\tau_n^L T}}
\max_{1\le k\le m_n}
\left|
\sum_{i=1}^k\check\xi_{1,i}
-\frac{k}{m_n}\sum_{i=1}^{m_n}\check\xi_{1,i}
\right|
\\
&
\dto
\sup_{0\le s\le 1}|\boldsymbol B_1^0(s)|,
\end{align*}
which concludes the proof of \eqref{eq-000-1}. 
%%%%%%%%%%%%%%%%%%%%

\textit{Proof of \eqref{eq-000-2}. }
We have
\begin{align*}
&\sum_{i=1}^k\check\xi_{1,i}
-\frac{k}{[n\underline{\tau}_n]}\sum_{i=1}^{[n\underline{\tau}_n]}\check\xi_{1,i}
\nonumber
\\
&=
\sum_{i=1}^k\check\xi_{1,i}
-\frac{k}{[n\tau_*^\alpha]}\sum_{i=1}^{[n\tau_*^\alpha]}\check\xi_{1,i}
+\frac{k}{[n\tau_*^\alpha]}
\left(1-\frac{[n\tau_*^\alpha]}{[n\underline{\tau}_n]}\right)
\sum_{i=1}^{[n\tau_*^\alpha]}\check\xi_{1,i}
+\frac{k}{[n\underline{\tau}_n]}
\sum_{i=[n\underline{\tau}_n]+1}^{[n\tau_*^\alpha]}\check\xi_{1,i}.
\end{align*}
Therefore, by the same argument, we obtain
\begin{align*}
\TUA
&=
\frac{1}{\sqrt{d\underline{\tau}_n T}}
\max_{1\le k\le [n\tau_*^\alpha]}
\left|
\sum_{i=1}^k\check\xi_{1,i}
-\frac{k}{[n\underline{\tau}_n]}\sum_{i=1}^{[n\underline{\tau}_n]}\check\xi_{1,i}
\right|
\\
&=
\frac{1}{\sqrt{d\underline{\tau}_n T}}
\max_{1\le k\le [n\tau_*^\alpha]}
\left|
\sum_{i=1}^k\check\xi_{1,i}
-\frac{k}{[n\tau_*^\alpha]}\sum_{i=1}^{[n\tau_*^\alpha]}\check\xi_{1,i}
\right|
+o_p(1)
\\
&=
\sqrt{\frac{\tau_*^\alpha}{\underline{\tau}_n}}
\frac{1}{\sqrt{d\tau_*^\alpha T}}
\max_{1\le k\le [n\tau_*^\alpha]}
\left|
\sum_{i=1}^k\check\xi_{1,i}
-\frac{k}{[n\tau_*^\alpha]}\sum_{i=1}^{[n\tau_*^\alpha]}\check\xi_{1,i}
\right|
+o_p(1)
\\
&
\dto
\sup_{0\le s\le 1}|\boldsymbol B_1^0(s)|.
\end{align*}

%%%%%%%%%%%%%%%%%%%%%%%%%%%%%%

Next, we prove \eqref{eq-th1-2}.
Let
\begin{align*}
\TLB
&=
\frac{1}{\sqrt{d(1-\overline{\tau}_n) T}}
\max_{1\le k\le n-M_n}
\left|
\sum_{i=[n\overline{\tau}_n]+1}^{[n\overline{\tau}_n]+k}\check\xi_{2,i}
-\frac{k}{n-[n\overline{\tau}_n]}
\sum_{i=[n\overline{\tau}_n]+1}^{n}\check\xi_{2,i}
\right|,
\\
\TUB
&=
\frac{1}{\sqrt{d(1-\overline{\tau}_n) T}}
\max_{1\le k\le n-[n\tau_*^\alpha]}
\left|
\sum_{i=[n\overline{\tau}_n]+1}^{[n\overline{\tau}_n]+k}\check\xi_{2,i}
-\frac{k}{n-[n\overline{\tau}_n]}
\sum_{i=[n\overline{\tau}_n]+1}^{n}\check\xi_{2,i}
\right|.
\end{align*}
Since $\TLB\le\TOO\le\TUB$ on $D_n$, 
it is enough to show
\begin{align}
\TLB\dto\sup_{0\le s\le 1}|\boldsymbol B_1^0(s)|,
\label{eq-000-8}
\\
\TUB\dto\sup_{0\le s\le 1}|\boldsymbol B_1^0(s)|
\label{eq-000-9}
\end{align}
similar to the proof of \eqref{eq-th1-1}.

%%%%%%%%%%%%%%%%%%%%
\textit{Proof of \eqref{eq-000-8}. }
We can express that
\begin{align}
&\sum_{i=[n\overline{\tau}_n]+1}^{[n\overline{\tau}_n]+k}\check\xi_{2,i}
-\frac{k}{n-[n\overline{\tau}_n]}
\sum_{i=[n\overline{\tau}_n]+1}^{n}\check\xi_{2,i}
\nonumber
\\
&=
\sum_{i=M_n+1}^{M_n+k}\check\xi_{2,i}
+\sum_{i=[n\overline{\tau}_n]+1}^{M_n}\check\xi_{2,i}
-\sum_{i=[n\overline{\tau}_n]+k+1}^{M_n+k}\check\xi_{2,i}
\nonumber
\\
&
\qquad-
\frac{k}{n-M_n}
\sum_{i=M_n+1}^{n}\check\xi_{2,i}
+\frac{k}{n-M_n}
\left(1-\frac{n-M_n}{n-[n\overline{\tau}_n]}\right)
\sum_{i=M_n+1}^{n}\check\xi_{2,i}
-\frac{k}{n-[n\overline{\tau}_n]}
\sum_{i=[n\overline{\tau}_n]+1}^{M_n}\check\xi_{2,i}
\nonumber
\\
&=
\sum_{i=M_n+1}^{M_n+k}\check\xi_{2,i}
-\frac{k}{n-M_n}
\sum_{i=M_n+1}^{n}\check\xi_{2,i}
-\sum_{i=[n\overline{\tau}_n]+k+1}^{M_n+k}\check\xi_{2,i}
\nonumber
\\
&
\qquad+
\frac{k}{n-M_n}
\left(1-\frac{n-M_n}{n-[n\overline{\tau}_n]}\right)
\sum_{i=M_n+1}^{n}\check\xi_{2,i}
+\left(1-\frac{k}{n-[n\overline{\tau}_n]}\right)
\sum_{i=[n\overline{\tau}_n]+1}^{M_n}\check\xi_{2,i}.
\label{eq-000-9'}
\end{align}
Here, we have
\begin{align}
&\frac{1}{\sqrt{T}}
\max_{1\le k \le n-M_n}
\left|
\frac{k}{n-M_n}\left(1-\frac{n-M_n}{n-[n\overline{\tau}_n]}\right)
\sum_{i=M_n+1}^{n}\check\xi_{2,i}
\right|
\nonumber
\\
&\le
\frac{1}{\sqrt{T}}
\frac{|[n\overline{\tau}_n]-M_n|}{n-[n\overline{\tau}_n]}
\left|
\sum_{i=M_n+1}^{n}\check\xi_{2,i}
\right|
\nonumber
\\
&\le
\frac{1}{\sqrt{T}}
\frac{|[n\overline{\tau}_n]-M_n|}{n-[n\overline{\tau}_n]}
\left(
\left|
\sum_{i=M_n+1}^{n}\mathcal M_{2,i}
\right|
+
(n-M_n)O_p\Biggl(\sqrt\frac{h}{n}\Biggr)
\right)
\nonumber
\\
&=
\frac{n-M_n}{n-[n\overline{\tau}_n]}
n^{\epsilon_1-1}|[n\overline{\tau}_n]-M_n|
\left(
\frac{n^{1-\epsilon_1}}{\sqrt{T}(n-M_n)}
\left|
\sum_{i=M_n+1}^n\mathcal M_{2,i}
\right|
+
\frac{n^{1-\epsilon_1}}{\sqrt{T}}
O_p\Biggl(\sqrt{\frac{h}{n}}\Biggr)
\right).
\label{eq-000-10}
\end{align}
Since
$\dfrac{n-M_n}{n-[n\overline{\tau}_n]}n^{\epsilon_1-1}
|[n\overline{\tau}_n]-M_n|=O_p(1)$,
$\EE_{\theta_2}[\mathcal M_{2,i}^2]\le Ch$ 
and
\begin{align*}
\frac{n^{2-2\epsilon_1}h(n-M_n)}{T(n-M_n)^2}
=O(n^{-2\epsilon_1})
=o(1),
\end{align*}
%%%%%%%%%%%%%%%%%%%%
we have
\begin{align}\label{eq-000-11}
\frac{1}{\sqrt{T}}
\max_{1\le k \le n-M_n}
\left|
\frac{k}{n-M_n}\left(1-\frac{n-M_n}{n-[n\overline{\tau}_n]}\right)
\sum_{i=M_n+1}^{n}\check\xi_{2,i}
\right|
=o_p(1)
\end{align}
from Lemma \ref{lem1} and \eqref{eq-000-10}.
%%%%%%%%%%%%%%%%%%%%
Similarly, we see
\begin{align*}
\frac{1}{\sqrt{T}}
\max_{1\le k\le n-M_n}
\left|
\left(1-\frac{k}{n-[n\overline{\tau}_n]}\right)
\sum_{i=[n\overline{\tau}_n]+1}^{M_n}\check\xi_{2,i}
\right|
\le
\frac{1}{\sqrt{T}}
\left|
\sum_{i=[n\overline{\tau}_n]+1}^{M_n}\mathcal M_{2,i}
\right|
+o_p(1).
\end{align*}
%%%%%%%%%%%%%%%%%%%%
Since $[n\tau_*^\alpha]\le [n\overline{\tau}_n]$ on $D_n$ and
\begin{align*}
\frac{h(M_n-[n\tau_*^\alpha])}{T}
=O(n^{-\epsilon_1})
=o(1),
\end{align*}
we obtain
\begin{align}\label{eq-000-12}
\frac{1}{\sqrt{T}}
\max_{1\le k\le n-M_n}
\left|
\left(1-\frac{k}{n-[n\overline{\tau}_n]}\right)
\sum_{i=[n\overline{\tau}_n]+1}^{M_n}\check\xi_{2,i}
\right|
=o_p(1)
\end{align}
from Lemma \ref{lem1}.
In the same way as \eqref{eq-000-10}, we have
\begin{align}
\frac{1}{\sqrt{T}}
\max_{1\le k\le n-M_n}
\left|
\sum_{i=[n\overline{\tau}_n]+k+1}^{M_n+k}\check\xi_{2,i}
\right|
&\le
\frac{1}{\sqrt{T}}
\max_{1\le k\le n-M_n}
\left|
\sum_{i=[n\overline{\tau}_n]+k+1}^{M_n+k}\mathcal M_{2,i}
\right|
+o_p(1)
\nonumber
\\
&=:
\Qn+o_p(1).
\label{eq-000-13}
\end{align}
For all $\epsilon>0$, 
\begin{align}\label{eq-000-14}
P(\Qn>2\epsilon)
\le
P(\Qn>2\epsilon, D_n)+P(D_n^{\mathrm c}).
\end{align}
Here, the first term on the right hand side can be transformed as follows.
\begin{align}
&P(\Qn>2\epsilon,D_n)
\nonumber
\\
&\le
P\left(
\frac{1}{\sqrt{T}}
\max_{[n\tau_*^\alpha]\le \ell< M_n}
\max_{1\le k\le n-M_n}
\left|
\sum_{i=\ell+k+1}^{M_n+k}\Mi
\right|>2\epsilon, D_n
\right)
\nonumber
\\
&\le
P\left(
\frac{1}{\sqrt{T}}
\max_{[n\tau_*^\alpha]\le \ell< M_n}
\max_{1\le k\le n-M_n}
\left|
\sum_{i=\ell+k+1}^{M_n+k}\Mi
\right|>2\epsilon
\right)
\nonumber
\\
&=
P\left(
\frac{1}{\sqrt{T}}
\max_{[n\tau_*^\alpha]< \ell< M_n}
\max_{1\le k\le n-M_n}
\left|
\sum_{i=[n\tau_*^\alpha]+k+1}^{M_n+k}\Mi
-
\sum_{i=[n\tau_*^\alpha]+k+1}^{\ell+k}\Mi
\right|>2\epsilon
\right)
\nonumber
\\
&\le
P\left(
\frac{1}{\sqrt{T}}
\max_{1\le k\le n-M_n}
\left|
\sum_{i=[n\tau_*^\alpha]+k+1}^{M_n+k}\Mi
\right|
+
\frac{1}{\sqrt{T}}
\max_{[n\tau_*^\alpha]< \ell\le M_n}
\max_{1\le k\le n-M_n}
\left|
\sum_{i=[n\tau_*^\alpha]+k+1}^{\ell+k}\Mi
\right|
>2\epsilon
\right)
\nonumber
\\
&\le
P\left(
\frac{1}{\sqrt{T}}
\max_{1\le k\le n-M_n}
\left|
\sum_{i=[n\tau_*^\alpha]+k+1}^{M_n+k}\Mi
\right|
>\epsilon
\right)
\nonumber
\\
&\qquad+
P\left(
\frac{1}{\sqrt{T}}
\max_{[n\tau_*^\alpha]< \ell\le M_n}
\max_{1\le k\le n-M_n}
\left|
\sum_{i=[n\tau_*^\alpha]+k+1}^{\ell+k}\Mi
\right|
>\epsilon
\right).
\label{eq-000-15}
\end{align}
We choose $r>\frac{2-\epsilon_1}{\epsilon_1}$.
Noting that
$\epsilon_1r>2-\epsilon_1>1$,
we see,
from Theorem 2.11 of Hall and Heyde (1980),
%\cite{HallHeyde1980}, 
convex inequality and 
$\EE_{\theta_2}[\mathcal M_{2,i}^{2r}]\le Ch^r$,
\begin{align}
&P\left(
\frac{1}{\sqrt{T}}
\max_{1\le k\le n-M_n}
\left|
\sum_{i=[n\tau_*^\alpha]+k+1}^{M_n+k}\Mi
\right|
>\epsilon
\right)
\nonumber
\\
&=
P\Biggl(
\bigcup_{k=1}^{n-M_n}
\Biggl\{
\frac{1}{\sqrt{T}}
\Biggl|
\sum_{i=[n\tau_*^\alpha]+k+1}^{M_n+k}\Mi
\Biggr|
>\epsilon
\Biggr\}
\Biggr)
\nonumber
\\
&\le
\sum_{k=1}^{n-M_n}
P\Biggl(
\frac{1}{\sqrt{T}}
\Biggl|
\sum_{i=[n\tau_*^\alpha]+k+1}^{M_n+k}\Mi
\Biggr|
>\epsilon
\Biggr)
\nonumber
\\
&\le
\sum_{k=1}^{n-M_n}
\frac{1}{T^{r}\epsilon^{2r}}
\EE_{\theta_2}\Biggl[
\Biggl|
\sum_{i=[n\tau_*^\alpha]+k+1}^{M_n+k}\Mi
\Biggr|^{2r}
\Biggr]
\nonumber
\\
&\le
\sum_{k=1}^{n-M_n}
\frac{C_1}{T^{r}\epsilon^{2r}}
\EE_{\theta_2}\Biggl[
\Biggl(
\sum_{i=[n\tau_*^\alpha]+k+1}^{M_n+k}\Mi^2
\Biggr)^r
\Biggr]
\nonumber
\\
&\le
\sum_{k=1}^{n-M_n}
\frac{C_2}{T^{r}\epsilon^{2r}}
[n^{1-\epsilon_1}]^{r}
\frac{1}{[n^{1-\epsilon_1}]}
\sum_{i=[n\tau_*^\alpha]+k+1}^{M_n+k}
\EE_{\theta_2}[\Mi^{2r}]
\nonumber
\\
&=
O\left(
\frac{n^{1+r-\epsilon_1r}h^r}{T^r}
\right)
=
O(n^{1-\epsilon_1 r})
=o(1)
\label{eq-000-16}
\end{align}
and
\begin{align}
&P\left(
\frac{1}{\sqrt{T}}
\max_{[n\tau_*^\alpha]< \ell\le M_n}
\max_{1\le k\le n-M_n}
\left|
\sum_{i=[n\tau_*^\alpha]+k+1}^{\ell+k}\Mi
\right|
>\epsilon
\right)
\nonumber
\\
&=
P\Biggl(
\bigcup_{\ell=[n\tau_*^\alpha]+1}^{M_n}
\bigcup_{k=1}^{n-M_n}
\Biggl\{
\frac{1}{\sqrt{T}}
\Biggl|
\sum_{i=[n\tau_*^\alpha]+k+1}^{\ell+k}\Mi
\Biggr|
>\epsilon
\Biggr\}
\Biggr)
\nonumber
\\
&\le
\sum_{\ell=[n\tau_*^\alpha]+1}^{M_n}
\sum_{k=1}^{n-M_n}
P\Biggl(
\frac{1}{\sqrt{T}}
\Biggl|
\sum_{i=[n\tau_*^\alpha]+k+1}^{\ell+k}\Mi
\Biggr|
>\epsilon
\Biggr)
\nonumber
\\
&\le
\sum_{\ell=[n\tau_*^\alpha]+1}^{M_n}
\sum_{k=1}^{n-M_n}
\frac{1}{T^r\epsilon^{2r}}
\EE_{\theta_2}\Biggl[
\Biggl|
\sum_{i=[n\tau_*^\alpha]+k+1}^{\ell+k}\Mi
\Biggr|^{2r}
\Biggr]
\nonumber
\\
&\le
\sum_{\ell=[n\tau_*^\alpha]+1}^{M_n}
\sum_{k=1}^{n-M_n}
\frac{C_1}{T^r\epsilon^{2r}}
\EE_{\theta_2}\Biggl[
\Biggl(
\sum_{i=[n\tau_*^\alpha]+k+1}^{M_n+k}\Mi^2
\Biggr)^r
\Biggr]
\nonumber
\\
&\le
\sum_{\ell=[n\tau_*^\alpha]+1}^{M_n}
\sum_{k=1}^{n-M_n}
\frac{C_2}{T^r\epsilon^{2r}}
[n^{1-\epsilon_1}]^p
\frac{1}{[n^{1-\epsilon_1}]}
\sum_{i=[n\tau_*^\alpha]+k+1}^{M_n+k}
\EE_{\theta_2}[\Mi^{2r}]
\nonumber
\\
&\le
\sum_{\ell=[n\tau_*^\alpha]+1}^{M_n}
\sum_{k=1}^{n-M_n}
\frac{C_3}{T^r\epsilon^{2r}}
[n^{1-\epsilon_1}]^r h^r
\nonumber
\\
&=O(n^{1-\epsilon_1+1-\epsilon_1 r})
=O(n^{2-\epsilon_1-\epsilon_1 r})
=o(1).
\label{eq-000-17}
\end{align}
According to \eqref{eq-000-14}-\eqref{eq-000-17} and $P(D_n^{\mathrm c})\lto0$,
we have $\Qn=o_p(1)$.
Therefore, from \eqref{eq-000-13},
\begin{align}\label{eq-000-18}
\frac{1}{\sqrt{T}}
\max_{1\le k\le n-M_n}
\left|
\sum_{i=[n\overline{\tau}_n]+k+1}^{M_n+k}\check\xi_{2,i}
\right|
=o_p(1).
\end{align}
From \eqref{eq-000-9'}, \eqref{eq-000-11}, \eqref{eq-000-12} and \eqref{eq-000-18}, 
we obtain
\begin{align*}
\TLB
&=
\frac{1}{\sqrt{d(1-\overline{\tau}_n) T}}
\max_{1\le k\le n-M_n}
\left|
\sum_{i=[n\overline{\tau}_n]+1}^{[n\overline{\tau}_n]+k}\check\xi_{2,i}
-\frac{k}{n-[n\overline{\tau}_n]}
\sum_{i=[n\overline{\tau}_n]+1}^{n}\check\xi_{2,i}
\right|
\\
&=
\frac{1}{\sqrt{d(1-\overline{\tau}_n) T}}
\max_{1\le k\le n-M_n}
\left|
\sum_{i=M_n+1}^{M_n+k}\check\xi_{2,i}
-\frac{k}{n-M_n}
\sum_{i=M_n+1}^{n}\check\xi_{2,i}
\right|
+o_p(1)
\\
&=
\sqrt\frac{1-\tau_n^U}{1-\overline{\tau}_n}
\frac{1}{\sqrt{d(1-\tau_n^U) T}}
\max_{1\le k\le n-M_n}
\left|
\sum_{i=M_n+1}^{M_n+k}\check\xi_{2,i}
-\frac{k}{n-M_n}
\sum_{i=M_n+1}^{n}\check\xi_{2,i}
\right|
+o_p(1)
\\
&\dto\sup_{0\le s\le 1}|\boldsymbol B_1^0(s)|.
\end{align*}

\textit{Proof of \eqref{eq-000-9}. }
Since it follows that
\begin{align*}
&\sum_{i=[n\overline{\tau}_n]+1}^{[n\overline{\tau}_n]+k}\check\xi_{2,i}
-\frac{k}{n-[n\overline{\tau}_n]}
\sum_{i=[n\overline{\tau}_n]+1}^{n}\check\xi_{2,i}
\\
&=
\sum_{i=[n\tau_*^\alpha]+1}^{[n\tau_*^\alpha]+k}\check\xi_{2,i}
+\sum_{i=[n\tau_*^\alpha]+k+1}^{[n\overline{\tau}_n]+k}\check\xi_{2,i}
-\sum_{i=[n\tau_*^\alpha]+1}^{[n\overline{\tau}_n]}\check\xi_{2,i}
\\
&\qquad
-\frac{k}{n-[n\tau_*^\alpha]}
\sum_{i=[n\tau_*^\alpha]+1}^{n}\check\xi_{2,i}
+\frac{k}{n-[n\tau_*^\alpha]}
\left(1-\frac{n-[n\tau_*^\alpha]}{n-[n\overline{\tau}_n]}\right)
\sum_{i=m_n+1}^{n}\check\xi_{2,i}
+\frac{k}{n-[n\overline{\tau}_n]}
\sum_{i=[n\tau_*^\alpha]+1}^{[n\overline{\tau}_n]}\check\xi_{2,i}
\\
&=
\sum_{i=[n\tau_*^\alpha]+1}^{[n\tau_*^\alpha]+k}\check\xi_{2,i}
-\frac{k}{n-[n\tau_*^\alpha]}
\sum_{i=[n\tau_*^\alpha]+1}^{n}\check\xi_{2,i}
+\sum_{i=[n\tau_*^\alpha]+k+1}^{[n\overline{\tau}_n]+k}\check\xi_{2,i}
\\
&\qquad
+\frac{k}{n-[n\tau_*^\alpha]}
\left(1-\frac{n-[n\tau_*^\alpha]}{n-[n\overline\tau_n]}\right)
\sum_{i=[n\tau_*^\alpha]+1}^{n}\check\xi_{2,i}
-\left(1-\frac{k}{n-[n\overline{\tau}_n]}\right)
\sum_{i=[n\tau_*^\alpha]+1}^{[n\overline{\tau}_n]}\check\xi_{2,i},
\end{align*}
we obtain, by the same argument, 
\begin{align*}
\TUB
&=
\frac{1}{\sqrt{d(1-\overline{\tau}_n) T}}
\max_{1\le k\le n-[n\tau_*^\alpha]}
\left|
\sum_{i=[n\overline{\tau}_n]+1}^{[n\overline{\tau}_n]+k}\check\xi_{2,i}
-\frac{k}{n-[n\overline{\tau}_n]}
\sum_{i=[n\overline{\tau}_n]+1}^{n}\check\xi_{2,i}
\right|
\\
&=
\frac{1}{\sqrt{d(1-\overline{\tau}_n) T}}
\max_{1\le k\le n-[n\tau_*^\alpha]}
\left|
\sum_{i=[n\tau_*^\alpha]+1}^{[n\tau_*^\alpha]+k}\check\xi_{2,i}
-\frac{k}{n-[n\tau_*^\alpha]}
\sum_{i=[n\tau_*^\alpha]+1}^{n}\check\xi_{2,i}
\right|
+o_p(1)
\\
&=
\sqrt\frac{1-\tau_*^\alpha}{1-\overline{\tau}_n}
\frac{1}{\sqrt{d(1-\tau_*^\alpha) T}}
\max_{1\le k\le n-[n\tau_*^\alpha]}
\left|
\sum_{i=[n\tau_*^\alpha]+1}^{[n\tau_*^\alpha]+k}\check\xi_{2,i}
-\frac{k}{n-[n\tau_*^\alpha]}
\sum_{i=[n\tau_*^\alpha]+1}^{n}\check\xi_{2,i}
\right|
+o_p(1)
\\
&\dto\sup_{0\le s\le 1}|\boldsymbol B_1^0(s)|.
\end{align*}
\end{proof}
%%%%%%%%%%%%%%%%%%%%%%%%%%%%%%
\begin{proof}[\bf{Proof of Theorem \ref{th2}}]
Let
\begin{align*}
\TLC
&=
\frac{1}{\sqrt{\underline{\tau}_n T}}
\max_{1\le k\le m_n}
\left\|
\mathcal I_{1,n}^{-1/2}
\left(
\sum_{i=1}^k\check\zeta_{1,i}
-\frac{k}{[n\underline{\tau}_n]}\sum_{i=1}^{[n\underline{\tau}_n]}\check\zeta_{1,i}
\right)
\right\|,
\\
\TUC
&=
\frac{1}{\sqrt{\underline{\tau}_n T}}
\max_{1\le k\le [n\tau_*^\alpha]}
\left\|
\mathcal I_{1,n}^{-1/2}
\left(
\sum_{i=1}^k\check\zeta_{1,i}
-\frac{k}{[n\underline{\tau}_n]}\sum_{i=1}^{[n\underline{\tau}_n]}\check\zeta_{1,i}
\right)
\right\|.
\end{align*}
On $D_n$, we have $\TLC\le\TUT\le\TUC$.
Similar to the proof of \eqref{eq-th1-1}, it is enough to show
\begin{align}
\TLC\dto\sup_{0\le s\le 1}\|\boldsymbol B_q^0(s)\|,
\label{eq-001-1}
\\
\TUC\dto\sup_{0\le s\le 1}\|\boldsymbol B_q^0(s)\|.
\label{eq-001-2}
\end{align}

\textit{Proof of \eqref{eq-001-1}. }
We have
\begin{align}
&\sum_{i=1}^k\check\zeta_{1,i}
-\frac{k}{[n\underline{\tau}_n]}\sum_{i=1}^{[n\underline{\tau}_n]}\check\zeta_{1,i}
\nonumber
\\
&=
\sum_{i=1}^k\check\zeta_{1,i}
-\frac{k}{m_n}\sum_{i=1}^{m_n}\check\zeta_{1,i}
+\frac{k}{m_n}\left(1-\frac{m_n}{[n\underline{\tau}_n]}\right)\sum_{i=1}^{m_n}\check\zeta_{1,i}
-\frac{k}{[n\underline{\tau}_n]}\sum_{i=m_n+1}^{[n\underline{\tau}_n]}\check\zeta_{1,i}.
\label{eq-001-3}
\end{align}
Let
\begin{align*}
\mathcal Z_{k,i}^{[j]}
&=
\frac{1}{j!}
\left.\partial_\beta^j
\Bigl(\partial_\beta b_{i-1}(\beta)^\TT 
A_{i-1}^{-1}(\alpha_k^*)(\DeX-hb_{i-1}(\beta_k))
\Bigr)
\right|_{\beta=\beta_k},
\\
\mathcal N_{k,i}^{[j]}
&=
\mathcal Z_{k,i}^{[j]}
-\EE_{\theta_k}[\mathcal Z_{k,i}^{[j]}|\GG].
\end{align*}
Here, taking into account that 
\begin{align*}
\check\zeta_{k,i}
&=\partial_\beta b_{i-1}(\hat\beta_k)^\TT 
A_{i-1}^{-1}(\hat\alpha_k)(\DeX-hb_{i-1}(\hat\beta_k))\\
%%%%%
&=\partial_\beta b_{i-1}(\hat\beta_k)^\TT 
A_{i-1}^{-1}(\alpha_k^*)(\DeX-hb_{i-1}(\hat\beta_k))
\\
&\qquad+
\frac{1}{\sqrt n}\int_0^1\left.
\partial_{\alpha}\left(
\partial_\beta b_{i-1}(\hat\beta_k)^\TT 
A_{i-1}^{-1}(\alpha)(\DeX-hb_{i-1}(\hat\beta_k))\right)
\right|_{\alpha=\alpha_k^*+u(\hat\alpha_k-\alpha_k^*)}
\dd u
\sqrt n(\hat\alpha_k-\alpha_k^*)\\
%%%%%
&=\partial_\beta b_{i-1}(\hat\beta_k)^\TT 
A_{i-1}^{-1}(\alpha_k^*)(\DeX-hb_{i-1}(\beta_k))
-h \partial_\beta b_{i-1}(\hat\beta_k)^\TT 
A_{i-1}^{-1}(\alpha_k^*)(b_{i-1}(\hat\beta_k)-b_{i-1}(\beta_k))\\
&\qquad+
\frac{1}{\sqrt n}\int_0^1\left.
\partial_{\alpha}\left(
\partial_\beta b_{i-1}(\hat\beta_k)^\TT 
A_{i-1}^{-1}(\alpha)(\DeX-hb_{i-1}(\hat\beta_k))\right)
\right|_{\alpha=\alpha_k^*+u(\hat\alpha_k-\alpha_k^*)}
\dd u
\sqrt n(\hat\alpha_k-\alpha_k^*)\\
%%%%%
&=
\sum_{j=0}^{\mma-1}
\frac{1}{T^{j/2}}
\mathcal Z_{k,i}^{[j]}
\otimes\bigl(\sqrt T(\hat\beta_k-\beta_k)\bigr)^{\otimes j}
\\
&\qquad+
\frac{1}{T^{\mma/2}}
\int_0^1\frac{(1-u)^{\mma-1}}{(\mma-1)!}\left.\partial_\beta^{\mma}
\Bigl(\partial_\beta b_{i-1}(\beta)^\TT 
A_{i-1}^{-1}(\alpha_k^*)(\DeX-hb_{i-1}(\beta_k))
\Bigr)
\right|_{\beta=\beta_k+u(\hat\beta_k-\beta_k)}
\\
&\qquad\qquad
\otimes\bigl(\sqrt T(\hat\beta_k-\beta_k)\bigr)^{\otimes \mma}
\\
&\qquad
-\frac{h}{\sqrt T} 
\int_0^1 \partial_{\beta}
\left.
\Bigl(
\partial_\beta b_{i-1}(\hat\beta_k)^\TT 
A_{i-1}^{-1}(\alpha_k^*) (b_{i-1}(\beta)-b_{i-1}(\beta_k))
\Bigr)
\right|_{\beta=\beta_k+u(\hat\beta_k-\beta_k)}
\dd u
\sqrt T(\hat\beta_k-\beta_k)\\
&\qquad+
\frac{1}{\sqrt n}\int_0^1\left.
\partial_{\alpha}
\left(
\partial_\beta b_{i-1}(\hat\beta_k)^\TT 
A_{i-1}^{-1}(\alpha)(\DeX-hb_{i-1}(\hat\beta_k))\right)
\right|_{\alpha=\alpha_k^*+u(\hat\alpha_k-\alpha_k^*)}
\dd u
\sqrt n(\hat\alpha_k-\alpha_k^*)\\
%%%%%
&=
\sum_{j=0}^{\mma-1}
\frac{1}{T^{j/2}}\mathcal Z_{k,i}^{[j]}\otimes
\bigl(\sqrt T(\hat\beta_k-\beta_k)\bigr)^{\otimes j}
+
O_p\Biggl(\frac{\sqrt h}{T^{\mma/2}}
\lor\sqrt{\frac{h}{n}}\Biggr)
\\
%%%%
&=
\sum_{j=0}^{\mma-1}
\frac{1}{T^{j/2}}\mathcal N_{k,i}^{[j]}\otimes
\bigl(\sqrt T(\hat\beta_k-\beta_k)\bigr)^{\otimes j}
+
\sum_{j=0}^{\mma-1}
\frac{1}{T^{j/2}}\EE_{\theta_k}[\mathcal Z_{k,i}^{[j]}|\GG]\otimes
\bigl(\sqrt T(\hat\beta_k-\beta_k)\bigr)^{\otimes j}
\\
&\qquad+
O_p\Biggl(\frac{\sqrt h}{T^{\mma/2}}
\lor\sqrt{\frac{h}{n}}\Biggr)
\\
%%%%
&=
\sum_{j=0}^{\mma-1}
\frac{1}{T^{j/2}}\mathcal N_{k,i}^{[j]}\otimes
\bigl(\sqrt T(\hat\beta_k-\beta_k)\bigr)^{\otimes j}
+
O_p\Biggl(\frac{\sqrt h}{T^{\mma/2}}
\lor\sqrt{\frac{h}{n}}\lor h^2\Biggr)
\\
%%%%
&=
\sum_{j=0}^{\mma-1}
\frac{1}{T^{j/2}}\mathcal N_{k,i}^{[j]}\otimes
\bigl(\sqrt T(\hat\beta_k-\beta_k)\bigr)^{\otimes j}
+
O_p\Biggl(\sqrt{\frac{h}{n}}\Biggr),
\end{align*}
%%%%%%%%%%%%%%%%%%%%
we have
\begin{align}
&\frac{1}{\sqrt{T}}
\max_{1\le k \le m_n}
\left\|
\frac{k}{m_n}\left(1-\frac{m_n}{[n\underline{\tau}_n]}\right)
\sum_{i=1}^{m_n}\check\zeta_{1,i}
\right\|
\nonumber
\\
&\le
\frac{1}{\sqrt{T}}
\frac{|[n\underline{\tau}_n]-m_n|}{[n\underline{\tau}_n]}
\left\|
\sum_{i=1}^{m_n}\check\zeta_{1,i}
\right\|
\nonumber
\\
&\le
\frac{1}{\sqrt{T}}
\frac{|[n\underline{\tau}_n]-m_n|}{[n\underline{\tau}_n]}
\Biggl(
\sum_{j=0}^{m-1}
\frac{1}{T^{j/2}}
\left\|
\sum_{i=1}^{m_n}\mathcal N_{1,i}^{[j]}
\right\|
\bigl(\sqrt T|\hat\beta_1-\beta_1|\bigr)^j
+m_n O_p\biggl(\sqrt{\frac{h}{n}}\biggr)
\Biggr)
\nonumber
\\
%%%%%
&\le
\frac{m_n}{[n\underline{\tau}_n]}
n^{\epsilon_1-1}|[n\underline{\tau}_n]-m_n|
\Biggl(
\sum_{j=0}^{m-1}
\frac{n^{1-\epsilon_1}}{T^{(j+1)/2}m_n}
\left\|
\sum_{i=1}^{m_n}\mathcal N_{1,i}^{[j]}
\right\|
\bigl(\sqrt T|\hat\beta_1-\beta_1|\bigr)^j
+O_p(n^{-\epsilon_1})
\Biggr).
\label{eq-001-4}
\end{align}
%%%%%%%%%%%%%%%%%%%%
Since
$\dfrac{m_n}{[n\underline{\tau}_n]}n^{\epsilon_1-1}|[n\underline{\tau}_n]-m_n|=O_p(1)$,
$\EE_{\theta_1}[\|\mathcal N_{1,i}^{[j]}\|^2]\le Ch$
and 
\begin{align*}
\frac{n^{2-2\epsilon_1}m_n h}{T^{(j+1)}m_n^2}
=O(n^{-2\epsilon_1})
=o(1)
\end{align*}
for $0\le j\le \mma-1$, we have
\begin{align}
\frac{1}{\sqrt{T}}
\max_{1\le k \le m_n}
\left\|
\frac{k}{m_n}\left(1-\frac{m_n}{[n\underline{\tau}_n]}\right)
\sum_{i=1}^{m_n}\check\zeta_{1,i}
\right\|
=o_p(1)
\label{eq-001-5}
\end{align}
from Lemma \ref{lem1} and \eqref{eq-001-4}.
By the same argument, we also have
\begin{align}
\frac{1}{\sqrt{T}}
\max_{1\le k \le m_n}
\left\|
\frac{k}{[n\underline{\tau}_n]}
\sum_{i=1}^{m_n}\check\zeta_{1,i}
\right\|
=o_p(1).
\label{eq-001-6}
\end{align}
Therefore, according to 
\eqref{eq-001-3}, \eqref{eq-001-5}, \eqref{eq-001-6} and
Theorem 3 of Tonaki et al. (2020),
%\cite{TKU2020}, 
we obtain
\begin{align*}
\TLC
&=
\frac{1}{\sqrt{\underline{\tau}_n T}}
\max_{1\le k\le m_n}
\left\|
{\mathcal I}_{1,n}^{-1/2}
\left(
\sum_{i=1}^k\check\zeta_{1,i}
-\frac{k}{[n\underline{\tau}_n]}
\sum_{i=1}^{[n\underline{\tau}_n]}\check\zeta_{1,i}
\right)\right\|
\\
&=
\frac{1}{\sqrt{\underline{\tau}_n T}}
\max_{1\le k\le m_n}
\left\|
{\mathcal I}_{1,n}^{-1/2}
\left(
\sum_{i=1}^k\check\zeta_{1,i}
-\frac{k}{m_n}\sum_{i=1}^{m_n}\check\zeta_{1,i}
\right)\right\|
+o_p(1)
\\
&=
\sqrt{\frac{\tau_n^L}{\underline{\tau}_n}}
\frac{1}{\sqrt{\tau_n^L T}}
\max_{1\le k\le m_n}
\left\|
{\mathcal I}_{1,n}^{-1/2}
\left(
\sum_{i=1}^k\check\zeta_{1,i}
-\frac{k}{m_n}\sum_{i=1}^{m_n}\check\zeta_{1,i}
\right)\right\|
+o_p(1)
\\
&
\dto
\sup_{0\le s\le 1}\|\boldsymbol{B}_q^0(s)\|.
\end{align*}

\textit{Proof of \eqref{eq-001-2}. }
We have
\begin{align*}
&\sum_{i=1}^k\check\zeta_{1,i}
-\frac{k}{[n\underline{\tau}_n]}
\sum_{i=1}^{[n\underline{\tau}_n]}\check\zeta_{1,i}
\nonumber
\\
&=
\sum_{i=1}^k\check\zeta_{1,i}
-\frac{k}{[n\tau_*^\alpha]}\sum_{i=1}^{[n\tau_*^\alpha]}\check\zeta_{1,i}
+\frac{k}{[n\tau_*^\alpha]}
\left(1-\frac{[n\tau_*^\alpha]}{[n\underline{\tau}_n]}\right)
\sum_{i=1}^{[n\tau_*^\alpha]}\check\zeta_{1,i}
+\frac{k}{[n\underline{\tau}_n]}
\sum_{i=[n\underline{\tau}_n]+1}^{[n\tau_*^\alpha]}\check\zeta_{1,i}.
\end{align*}
By the same argument, we obtain
%%%%%%%%%%%%%%%%%%%%
\begin{align*}
\TUC
&=
\frac{1}{\sqrt{\underline{\tau}_n T}}
\max_{1\le k\le [n\tau_*^\alpha]}
\left\|
{\mathcal I}_{1,n}^{-1/2}
\left(
\sum_{i=1}^k\check\zeta_{1,i}
-\frac{k}{[n\underline{\tau}_n]}
\sum_{i=1}^{[n\underline{\tau}_n]}\check\zeta_{1,i}
\right)\right\|
\\
&=
\frac{1}{\sqrt{\underline{\tau}_n T}}
\max_{1\le k\le [n\tau_*^\alpha]}
\left\|
{\mathcal I}_{1,n}^{-1/2}
\left(
\sum_{i=1}^k\check\zeta_{1,i}
-\frac{k}{[n\tau_*^\alpha]}\sum_{i=1}^{[n\tau_*^\alpha]}\check\zeta_{1,i}
\right)\right\|
+o_p(1)
\\
&=
\sqrt{\frac{\tau_*^\alpha}{\underline{\tau}_n}}
\frac{1}{\sqrt{\tau_*^\alpha T}}
\max_{1\le k\le [n\tau_*^\alpha]}
\left\|
{\mathcal I}_{1,n}^{-1/2}
\left(
\sum_{i=1}^k\check\zeta_{1,i}
-\frac{k}{[n\tau_*^\alpha]}\sum_{i=1}^{[n\tau_*^\alpha]}\check\zeta_{1,i}
\right)\right\|
+o_p(1)
\\
&
\dto
\sup_{0\le s\le 1}\|\boldsymbol{B}_q^0(s)\|.
\end{align*}

Next, we show \eqref{eq-th1-4}.
Let
\begin{align*}
\TLD
&=
\frac{1}{\sqrt{(1-\overline{\tau}_n) T}}
\max_{1\le k\le n-M_n}
\left\|
{\mathcal I}_{2,n}^{-1/2}
\left(
\sum_{i=[n\overline{\tau}_n]+1}^{[n\overline{\tau}_n]+k}
\check\zeta_{2,i}
-\frac{k}{n-[n\overline{\tau}_n]}
\sum_{i=[n\overline{\tau}_n]+1}^{n}
\check\zeta_{2,i}
\right)\right\|,
\\
\TUD
&=
\frac{1}{\sqrt{(1-\overline{\tau}_n) T}}
\max_{1\le k\le n-[n\tau_*^\alpha]}
\left\|
{\mathcal I}_{2,n}^{-1/2}
\left(
\sum_{i=[n\overline{\tau}_n]+1}^{[n\overline{\tau}_n]+k}
\check\zeta_{2,i}
-\frac{k}{n-[n\overline{\tau}_n]}
\sum_{i=[n\overline{\tau}_n]+1}^{n}
\check\zeta_{2,i}
\right)\right\|.
\end{align*}
Since $\TLD\le\TOT\le\TUD$ on $D_n$,
it is sufficient to prove
\begin{align*}
\TLD\dto\sup_{0\le s\le 1}\|\boldsymbol B_q^0(s)\|,
\quad
\TUD\dto\sup_{0\le s\le 1}\|\boldsymbol B_q^0(s)\|.
\end{align*}
These can be proved in the same way as \eqref{eq-000-8} and \eqref{eq-000-9}.
\end{proof}
%%%%%%%%%%%%%%%%%%%%%%%%%%%%%%%%%%%%%%%%

\begin{proof}[\bf{Proof of Theorem \ref{th3}}]
\textit{Step 1.}
We first prove that
$P(\TUO>w_1(\epsilon))$ converges to one as $n\to\infty$ under $H_1^{(1)}$.

(a) If we prove
\begin{align}
&\frac{1}{\tau_*^\beta T}\sum_{i=1}^{[n\tau_*^\beta]}\check\xi_{1,i}
\pto
\mathcal G_{1,1},
\label{eq-A-1}
\\
&\frac{1}{(\underline\tau_n-\tau_*^\beta)T}
\sum_{i=[n\tau_*^\beta]+1}^{[n\underline\tau_n]}\check\xi_{1,i}
\pto
\mathcal G_{1,2},
\label{eq-A-2}
\end{align}
then
\begin{align*}
\frac{1}{\underline\tau_n T}
\sum_{i=1}^{[n\underline\tau_n]}\check\xi_{1,i}
&=
\frac{\tau_*^\beta}{\underline\tau_n}
\frac{1}{\tau_*^\beta T}\sum_{i=1}^{[n\tau_*^\beta]}\check\xi_{1,i}
+
\frac{\underline\tau_n-\tau_*^\beta}{\underline\tau_n}
\frac{1}{(\underline\tau_n-\tau_*^\beta)T}
\sum_{i=[n\tau_*^\beta]+1}^{[n\underline\tau_n]}\check\xi_{1,i}
\\
&\pto
\frac{\tau_*^\beta}{\tau_*^\alpha}\mathcal G_{1,1}
+
\biggl(1-\frac{\tau_*^\beta}{\tau_*^\alpha}\biggr)
\mathcal G_{1,2},
\end{align*}
\begin{align*}
\frac{1}{\underline\tau_n T}
\sum_{i=1}^{[n\tau_*^\beta]}\check\xi_{1,i}
-
\frac{[n\tau_*^\beta]}{[n\underline\tau_n]}
\frac{1}{\underline\tau_n T}
\sum_{i=1}^{[n\underline\tau_n]}\check\xi_{1,i}
&=
\frac{\tau_*^\beta}{\underline\tau_n}
\frac{1}{\tau_*^\beta T}
\sum_{i=1}^{[n\tau_*^\beta]}\check\xi_{1,i}
-
\frac{[n\tau_*^\beta]}{[n\underline\tau_n]}
\frac{1}{\underline\tau_n T}
\sum_{i=1}^{[n\underline\tau_n]}\check\xi_{1,i}
\\
&\pto
\frac{\tau_*^\beta}{\tau_*^\alpha}
\Biggl(
\mathcal G_{1,1}
-\biggl(
\frac{\tau_*^\beta}{\tau_*^\alpha}\mathcal G_{1,1}
+\biggl(1-\frac{\tau_*^\beta}{\tau_*^\alpha}\biggr)
\mathcal G_{1,2}
\biggr)
\Biggr)
\\
&=
\frac{\tau_*^\beta}{\tau_*^\alpha}
\biggl(1-\frac{\tau_*^\beta}{\tau_*^\alpha}\biggr)
\bigl(\mathcal G_{1,1}-\mathcal G_{1,2}\bigr)
\neq0.
\end{align*}
Therefore, we have
\begin{align*}
\TUO
&=
\frac{1}{\sqrt{d\underline{\tau}_n T}}
\max_{1\le k\le [n\underline{\tau}_n]}
\left|
\sum_{i=1}^k\check\xi_{1,i}
-\frac{k}{[n\underline{\tau}_n]}\sum_{i=1}^{[n\underline{\tau}_n]}\check\xi_{1,i}
\right|
\\
&\ge
\frac{1}{\sqrt{d\underline{\tau}_n T}}
\left|
\sum_{i=1}^{[n\tau_*^\beta]}\check\xi_{1,i}
-\frac{[n\tau_*^\beta]}{[n\underline{\tau}_n]}
\sum_{i=1}^{[n\underline{\tau}_n]}\check\xi_{1,i}
\right|
\\
&=
\sqrt{\frac{\underline{\tau}_n T}{d}}
\left|
\frac{1}{\underline\tau_n T}
\sum_{i=1}^{[n\tau_*^\beta]}\check\xi_{1,i}
-
\frac{[n\tau_*^\beta]}{[n\underline\tau_n]}
\frac{1}{\underline\tau_n T}
\sum_{i=1}^{[n\underline\tau_n]}\check\xi_{1,i}
\right|
\\
&\pto\infty,
\end{align*}
which implies $P(\TUO>w_1(\epsilon))\lto1$.

\eqref{eq-A-1} can be shown similarly to (4.54) of Tonaki et al. (2020).
%\cite{TKU2020}. 

\textit{Proof of \eqref{eq-A-2}. }
We can prove 
\begin{align}
\frac{1}{(\tau_*^\alpha-\tau_*^\beta)T}
\sum_{i=[n\tau_*^\beta]+1}^{[n\tau_*^\alpha]}\check\xi_{1,i}
\pto
\mathcal G_{1,2}
\label{eq-A-3}
\end{align}
with the same argument as (4.55) of Tonaki et al. (2020). 
%\cite{TKU2020}.
We have
\begin{align*}
\Delta_n
&=
\left|
\frac{1}{(\underline\tau_n-\tau_*^\beta) T}
\sum_{i=[n\tau_*^\beta]+1}^{[n\underline\tau_n]}\check\xi_{1,i}
-\frac{1}{(\tau_*^\alpha-\tau_*^\beta) T}
\sum_{i=[n\tau_*^\beta]+1}^{[n\tau_*^\alpha]}\check\xi_{1,i}
\right|
\\
&=
\frac{1}{T}
\left|
\left(
\frac{1}{\underline\tau_n-\tau_*^\beta}
-\frac{1}{\tau_*^\alpha-\tau_*^\beta}
\right)
\sum_{i=[n\tau_*^\beta]+1}^{[n\underline\tau_n]}\check\xi_{1,i}
+
\frac{1}{\tau_*^\alpha-\tau_*^\beta}
\left(
\sum_{i=[n\tau_*^\beta]+1}^{[n\underline\tau_n]}\check\xi_{1,i}
-\sum_{i=[n\tau_*^\beta]+1}^{[n\tau_*^\alpha]}\check\xi_{1,i}
\right)
\right|
\\
&\le
\left|
\frac{\tau_*^\alpha-\underline\tau_n}
{(\underline\tau_n-\tau_*^\beta)(\tau_*^\alpha-\tau_*^\beta)}
\right|
\frac{1}{T}
\left|
\sum_{i=[n\tau_*^\beta]+1}^{[n\underline\tau_n]}\check\xi_{1,i}
\right|
+
\frac{1}{(\tau_*^\alpha-\tau_*^\beta) T}
\left|
\sum_{i=[n\tau_*^\beta]+1}^{[n\underline\tau_n]}\check\xi_{1,i}
-\sum_{i=[n\tau_*^\beta]+1}^{[n\tau_*^\alpha]}\check\xi_{1,i}
\right|
\\
&=
\left|
\frac{n^{\epsilon_1}(\tau_*^\alpha-\underline\tau_n)}
{(\underline\tau_n-\tau_*^\beta)(\tau_*^\alpha-\tau_*^\beta)}
\right|
\frac{n^{-\epsilon_1}}{T}
\left|
\sum_{i=[n\tau_*^\beta]+1}^{[n\underline\tau_n]}\check\xi_{1,i}
\right|
+
\frac{1}{(\tau_*^\alpha-\tau_*^\beta) T}
\left|
\sum_{i=[n\tau_*^\beta]+1}^{[n\underline\tau_n]}\check\xi_{1,i}
-\sum_{i=[n\tau_*^\beta]+1}^{[n\tau_*^\alpha]}\check\xi_{1,i}
\right|
\\
&=:
\left|
\frac{n^{\epsilon_1}(\tau_*^\alpha-\underline\tau_n)}
{(\underline\tau_n-\tau_*^\beta)(\tau_*^\alpha-\tau_*^\beta)}
\right|
\check{\mathcal S}_n
+
\frac{1}{\tau_*^\alpha-\tau_*^\beta}
\check{\mathcal Q}_n.
\end{align*}
If we prove $\check{\mathcal S}_n\pto0$ and $\check{\mathcal Q}_n\pto0$,
then, from $n^{\epsilon_1}(\tau_*^\alpha-\underline\tau_n)=O_p(1)$
and \eqref{eq-A-3}, 
we have $\Delta_n\pto0$ and \eqref{eq-A-2}.  
In the following, we prove them.

Set 
$\mathcal Y_{k,i}=1_d^\TT a_{i-1}^{-1}(\alpha_k^*)\DeX$, 
$\mathcal M_{k,i}=\mathcal Y_{k,i}-\EE_{\theta_k}[\mathcal Y_{k,i}|\GG]$.
Notice that
\begin{align*}
\check\xi_{k,i}
&=1_d^\TT a_{i-1}^{-1}(\hat\alpha_k)(\DeX-hb_{i-1}(\hat\beta_k))\\
%%%%%
&=1_d^\TT a_{i-1}^{-1}(\alpha_k^*)(\DeX-hb_{i-1}(\hat\beta_k))
\\
&\qquad+
\frac{1}{\sqrt n}\int_0^1\left.
\partial_{\alpha}\left(1_d^\TT a_{i-1}^{-1}(\alpha)(\DeX-hb_{i-1}(\hat\beta_k))\right)
\right|_{\alpha=\alpha_k^*+u(\hat\alpha_k-\alpha_k^*)}
\dd u
\sqrt n(\hat\alpha_k-\alpha_k^*)\\
%%%%%
&=1_d^\TT a_{i-1}^{-1}(\alpha_k^*)\DeX
-h 1_d^\TT a_{i-1}^{-1}(\alpha_k^*)b_{i-1}(\hat\beta_k)\\
&\qquad+
\frac{1}{\sqrt n}\int_0^1\left.
\partial_{\alpha}\left(1_d^\TT a_{i-1}^{-1}(\alpha)(\DeX-hb_{i-1}(\hat\beta_k))\right)
\right|_{\alpha=\alpha_k^*+u(\hat\alpha_k-\alpha_k^*)}
\dd u
\sqrt n(\hat\alpha_k-\alpha_k^*)\\
%%%%%
&=1_d^\TT a_{i-1}^{-1}(\alpha_k^*)\DeX
+O_p\Biggl(h\lor\sqrt{\frac{h}{n}}\Biggr)
\\
&=\mathcal M_{k,i}+\EE_{\theta_k}[\mathcal Y_{k,i}|\GG]+O_p(h)
\\
&=\mathcal M_{k,i}+O_p(h).
\end{align*}
We see
\begin{align*}
\check{\mathcal S}_n
&=
\frac{n^{-\epsilon_1}}{T}
\left|
\sum_{i=[n\tau_*^\beta]+1}^{[n\underline\tau_n]}
\mathcal M_{1,i}
\right|
+O_p\left(
\frac{n^{1-\epsilon_1}h}{T}
\right)
=:\mathcal S_n+o_p(1),
\end{align*}
and 
\begin{align*}
\check{\mathcal Q}_n
=
\frac{1}{T}
\left|
\sum_{i=[n\underline\tau_n]+1}^{[n\tau_*^\alpha]}\check\xi_{1,i}
\right|
=
\frac{1}{T}
\left|
\sum_{i=[n\underline\tau_n]+1}^{[n\tau_*^\alpha]}\mathcal M_{1,i}
\right|
+O_p(n^{-\epsilon_1})
=:
\Qn+o_p(1)
\end{align*}
on $D_n$. 
Since $[n\underline{\tau}_n]\le [n\tau_*^\alpha]$, 
$m_n\le[n\underline{\tau}_n]$
on $D_n$, 
\begin{align*}
\frac{n^{-2\epsilon_1}([n\tau_*^\alpha]-[n\tau_*^\beta])h}{T^2}
=O\left(\frac{n^{-2\epsilon_1}}{T}\right)=o(1),
\quad
\frac{([n\tau_*^\alpha]-m_n)h}{T^2}
=O\left(\frac{n^{-\epsilon_1}}{T}\right)=o(1),
\end{align*}
we have
$\mathcal S_n\pto0$ and $\mathcal Q_n\pto0$
from Lemma \ref{lem1}.
Hence, we obtain the desired results. 
%$\check{\mathcal S}_n\pto0$ and $\check{\mathcal Q}_n\pto0$.

(b) According to
\begin{align*}
\TUO
&=
\frac{1}{\sqrt{d\underline{\tau}_n T}}
\max_{1\le k\le [n\underline{\tau}_n]}
\left|
\sum_{i=1}^k\check\xi_{1,i}
-\frac{k}{[n\underline{\tau}_n]}\sum_{i=1}^{[n\underline{\tau}_n]}\check\xi_{1,i}
\right|
\\
&\ge
\frac{1}{\sqrt{d\underline{\tau}_n T}}
\left|
\sum_{i=1}^{[n\tau_*^\beta]}\check\xi_{1,i}
-\frac{[n\tau_*^\beta]}{[n\underline{\tau}_n]}
\sum_{i=1}^{[n\underline{\tau}_n]}\check\xi_{1,i}
\right|
\\
&=
\sqrt{\frac{ T\Deko^2}{d\underline{\tau}_n }}
\left|
\frac{1}{T\Deko}
\sum_{i=1}^{[n\tau_*^\beta]}\check\xi_{1,i}
-
\frac{[n\tau_*^\beta]}{[n\underline\tau_n]}
\frac{1}{T\Deko}
\sum_{i=1}^{[n\underline\tau_n]}\check\xi_{1,i}
\right|,
%\\
%&\pto\infty,
\end{align*}
it is enough to prove that there exists $c\neq0$ such that
\begin{align}
\mathcal K_n^{(1)}
=
\frac{1}{T\Deko}
\sum_{i=1}^{[n\tau_*^\beta]}\check\xi_{1,i}
-
\frac{[n\tau_*^\beta]}{[n\underline\tau_n]}
\frac{1}{T\Deko}
\sum_{i=1}^{[n\underline\tau_n]}\check\xi_{1,i}
\pto c.
\label{eq-A-4}%%%
\end{align}
Note that there exists $c'\neq0$ such that
\begin{align}
\mathcal K_n^{(2)}
=
\frac{1}{T\Deko}
\sum_{i=1}^{[n\tau_*^\beta]}\check\xi_{1,i}
-
\frac{[n\tau_*^\beta]}{[n\tau_*^\alpha]}
\frac{1}{T\Deko}
\sum_{i=1}^{[n\tau_*^\alpha]}\check\xi_{1,i}
\pto c'.
\label{eq-A-5}%%%
\end{align}
as in the proof of Proposition 2 of Tonaki et al. (2021).
%\cite{TKU2021}. 
Meanwhile, we see
\begin{align*}
\Delta_n
&=
|\mathcal K_n^{(1)}-\mathcal K_n^{(2)}|
\\
&=
\frac{1}{T\Deko}
\left|
\frac{[n\tau_*^\beta]}{[n\underline\tau_n]}
\sum_{i=1}^{[n\underline\tau_n]}\check\xi_{1,i}
-
\frac{[n\tau_*^\beta]}{[n\tau_*^\alpha]}
\sum_{i=1}^{[n\tau_*^\alpha]}\check\xi_{1,i}
\right|
\\
&\le
\frac{[n\tau_*^\beta]}{[n\underline\tau_n]}
\frac{1}{T\Deko}
\left|
\sum_{i=1}^{[n\underline\tau_n]}\check\xi_{1,i}
-
\sum_{i=1}^{[n\tau_*^\alpha]}\check\xi_{1,i}
\right|
+
n^{\epsilon_1}
\left|
\frac{[n\tau_*^\beta]}{[n\underline\tau_n]}
-\frac{[n\tau_*^\beta]}{[n\tau_*^\alpha]}
\right|
\frac{n^{-\epsilon_1}}{T\Deko}
\left|
\sum_{i=1}^{[n\tau_*^\alpha]}\check\xi_{1,i}
\right|
\\
&=:
\frac{[n\tau_*^\beta]}{[n\underline\tau_n]}
\check{\mathcal Q}_n
+
n^{\epsilon_1}
\left|
\frac{[n\tau_*^\beta]}{[n\underline\tau_n]}
-\frac{[n\tau_*^\beta]}{[n\tau_*^\alpha]}
\right|
\check{\mathcal S}_n.
\end{align*}
Here, we have, from \textbf{[G5]},
\begin{align*}
\check{\mathcal Q}_n
=
\frac{1}{T\Deko}
\left|
\sum_{i=[n\underline\tau_n]+1}^{[n\tau_*^\alpha]}\check\xi_{1,i}
\right|
=
\frac{1}{T\Deko}
\left|
\sum_{i=[n\underline\tau_n]+1}^{[n\tau_*^\alpha]}\mathcal M_{1,i}
\right|
+O_p\left(
\frac{1}{n^{\epsilon_1}\Deko}
\right)
=:
\Qn+o_p(1)
\end{align*}
and
\begin{align*}
\check{\mathcal S}_n
&=
\frac{n^{-\epsilon_1}}{T\Deko}
\left|
\sum_{i=1}^{[n\tau_*^\alpha]}\mathcal M_{1,i}
\right|
+O_p\left(
\frac{1}{n^{\epsilon_1}\Deko}
\right)
=:\mathcal S_n+o_p(1)
\end{align*}
on $D_n$.
Since $m_n\le[n\underline{\tau}_n]$ on $D_n$,
\begin{align*}
\frac{n^{-2\epsilon_1}[n\tau_*^\alpha]h}{T^2\Deko^2}
=O\left(\frac{n^{-2\epsilon_1}}{T\Deko^2}\right)=o(1),
\quad
\frac{([n\tau_*^\alpha]-m_n)h}{T^2\Deko^2}
=O\left(\frac{n^{-\epsilon_1}}{T\Deko^2}\right)=o(1),
\end{align*}
we have
$\mathcal S_n\pto0$ and $\mathcal Q_n\pto0$
from Lemma \ref{lem1},
that is, we obtain $\check{\mathcal S}_n\pto0$ and $\check{\mathcal Q}_n\pto0$.
Therefore, from
\begin{align*}
n^{\epsilon_1}
\left|
\frac{[n\tau_*^\beta]}{[n\underline\tau_n]}
-\frac{[n\tau_*^\beta]}{[n\tau_*^\alpha]}
\right|
=O_p(1),
\end{align*}
we have $\Delta_n\pto0$.
This and \eqref{eq-A-5} lead to \eqref{eq-A-4}.

%%%%%%%%%%%%%%%%%%%%%%%%%%%%%%%%%%%%%%%%

\textit{Step 2.}
Next, we prove $P(\TOO>w_1(\epsilon))$ converges to one as $n\to\infty$
under $H_1^{(2)}$.

(a) If we prove
\begin{align}
\frac{1}{(\tau_*^\beta-{\overline\tau}_n) T}
\sum_{i=[n\overline\tau_n]+1}^{[n\tau_*^\beta]}\check\xi_{2,i}
\pto
\mathcal G_{2,1},
%\label{eq-B-1}
\quad
\frac{1}{(1-\tau_*^\beta)T}
\sum_{i=[n\tau_*^\beta]+1}^n\check\xi_{2,i}
\pto
\mathcal G_{2,2},
\label{eq-B-2}
\end{align}
then
\begin{align*}
\frac{1}{(1-\overline{\tau}_n) T}
\sum_{i=[n\overline{\tau}_n]+1}^{n}\check\xi_{2,i}
&=
\frac{\tau_*^\beta-\overline{\tau}_n}{1-\overline{\tau}_n}
\frac{1}{(\tau_*^\beta-\overline{\tau}_n) T}
\sum_{i=[n\overline{\tau}_n]+1}^{[n\tau_*^\beta]}\check\xi_{2,i}
+
\frac{1-\tau_*^\beta}{1-\overline{\tau}_n}
\frac{1}{(1-\tau_*^\beta) T}
\sum_{i=[n\tau_*^\beta]+1}^{n}\check\xi_{2,i}
\\
&\pto
\frac{\tau_*^\beta-\tau_*^\alpha}{1-\tau_*^\alpha}
\mathcal G_{2,1}
+
\biggl(1-\frac{\tau_*^\beta-\tau_*^\alpha}{1-\tau_*^\alpha}\biggr)
\mathcal G_{2,2},
\end{align*}
\begin{align*}
&\frac{1}{(1-\overline{\tau}_n) T}
\sum_{i=[n\overline{\tau}_n]+1}^{[n\tau_*^\beta]}\check\xi_{2,i}
-\frac{[n\tau_*^\beta]-[n\overline{\tau}_n]}{n-[n\overline{\tau}_n]}
\frac{1}{(1-\overline{\tau}_n) T}
\sum_{i=[n\overline{\tau}_n]+1}^{n}\check\xi_{2,i}
\\
&=
\frac{\tau_*^\beta-\overline{\tau}_n}{1-\overline{\tau}_n}
\frac{1}{(\tau_*^\beta-\overline{\tau}_n) T}
\sum_{i=[n\overline{\tau}_n]+1}^{[n\tau_*^\beta]}\check\xi_{2,i}
-\frac{[n\tau_*^\beta]-[n\overline{\tau}_n]}{n-[n\overline{\tau}_n]}
\frac{1}{(1-\overline{\tau}_n) T}
\sum_{i=[n\overline{\tau}_n]+1}^{n}\check\xi_{2,i}
\\
&\pto
\frac{\tau_*^\beta-\tau_*^\alpha}{1-\tau_*^\alpha}
\Biggl(
\mathcal G_{2,1}
-\biggl(
\frac{\tau_*^\beta-\tau_*^\alpha}{1-\tau_*^\alpha}
\mathcal G_{2,1}
+\biggl(1-\frac{\tau_*^\beta-\tau_*^\alpha}{1-\tau_*^\alpha}\biggr)
\mathcal G_{2,2}
\biggr)
\Biggr)
\\
&=
\frac{\tau_*^\beta-\tau_*^\alpha}{1-\tau_*^\alpha}
\biggl(1-\frac{\tau_*^\beta-\tau_*^\alpha}{1-\tau_*^\alpha}\biggr)
\bigl(\mathcal G_{2,1}-\mathcal G_{2,2}\bigr)
\neq0,
\end{align*}
and
\begin{align*}
\TOO
&=
\frac{1}{\sqrt{d(1-\overline{\tau}_n) T}}
\max_{1\le k\le n-[n\overline{\tau}_n]}
\left|
\sum_{i=[n\overline{\tau}_n]+1}^{[n\overline{\tau}_n]+k}\check\xi_{2,i}
-\frac{k}{n-[n\overline{\tau}_n]}
\sum_{i=[n\overline{\tau}_n]+1}^{n}\check\xi_{2,i}
\right|
\\
&\ge
\frac{1}{\sqrt{d(1-\overline{\tau}_n) T}}
\left|
\sum_{i=[n\overline{\tau}_n]+1}^{[n\tau_*^\beta]}\check\xi_{2,i}
-\frac{[n\tau_*^\beta]-[n\overline{\tau}_n]}{n-[n\overline{\tau}_n]}
\sum_{i=[n\overline{\tau}_n]+1}^{n}\check\xi_{2,i}
\right|
\\
&=
\sqrt{\frac{(1-\overline{\tau}_n) T}{d}}
\left|
\frac{1}{(1-\overline{\tau}_n) T}
\sum_{i=[n\overline{\tau}_n]+1}^{[n\tau_*^\beta]}\check\xi_{2,i}
-\frac{[n\tau_*^\beta]-[n\overline{\tau}_n]}{n-[n\overline{\tau}_n]}
\frac{1}{(1-\overline{\tau}_n) T}
\sum_{i=[n\overline{\tau}_n]+1}^{n}\check\xi_{2,i}
\right|
\\
&\pto\infty.
\end{align*}
%hold, and we obtain $P(\TOO>w_1(\epsilon))\lto1$.
\eqref{eq-B-2} can be proved in the same way as
\eqref{eq-A-2} and (4.55) of Tonaki et al. (2020).
%\cite{TKU2020}.

(b) According to 
\begin{align*}
\TOO
&=
\frac{1}{\sqrt{d(1-\overline{\tau}_n) T}}
\max_{1\le k\le n-[n\overline{\tau}_n]}
\left|
\sum_{i=[n\overline{\tau}_n]+1}^{[n\overline{\tau}_n]+k}\check\xi_{2,i}
-\frac{k}{n-[n\overline{\tau}_n]}
\sum_{i=[n\overline{\tau}_n]+1}^{n}\check\xi_{2,i}
\right|
\\
&\ge
\frac{1}{\sqrt{d(1-\overline{\tau}_n) T}}
\left|
\sum_{i=[n\overline{\tau}_n]+1}^{[n\tau_*^\beta]}\check\xi_{2,i}
-\frac{[n\tau_*^\beta]-[n\overline{\tau}_n]}{n-[n\overline{\tau}_n]}
\sum_{i=[n\overline{\tau}_n]+1}^{n}\check\xi_{2,i}
\right|
\\
&=
\sqrt{\frac{T\Dekt^2}{d(1-\overline{\tau}_n)}}
\left|
\frac{1}{T\Dekt}
\sum_{i=[n\overline{\tau}_n]+1}^{[n\tau_*^\beta]}\check\xi_{2,i}
-\frac{[n\tau_*^\beta]-[n\overline{\tau}_n]}{n-[n\overline{\tau}_n]}
\frac{1}{T\Dekt}
\sum_{i=[n\overline{\tau}_n]+1}^{n}\check\xi_{2,i}
\right|,
\end{align*}
it suffices to prove that 
there exists $c\neq0$ such that
\begin{align*}
\frac{1}{T\Dekt}
\sum_{i=[n\overline{\tau}_n]+1}^{[n\tau_*^\beta]}\check\xi_{2,i}
-\frac{[n\tau_*^\beta]-[n\overline{\tau}_n]}{n-[n\overline{\tau}_n]}
\frac{1}{T\Dekt}
\sum_{i=[n\overline{\tau}_n]+1}^{n}\check\xi_{2,i}
\pto c.
\end{align*}
This can be derived in the same way as \eqref{eq-A-4}.
\end{proof}

%%%%%%%%%%%%%%%%%%%%%%%%%%%%%%%%%%%%%%%%
\begin{proof}[\bf{Proof of Theorem \ref{th4}}]
\textit{Step 1.}
First, we prove $P(\TUT>w_1(\epsilon))\lto1$ under $H_1^{(1)}$.

(a) If we prove
\begin{align}
&\frac{1}{\tau_*^\beta T}\sum_{i=1}^{[n\tau_*^\beta]}\check\zeta_{1,i}
\pto
\mathcal H_{1,1},
\label{eq-C-1}
\\
&\frac{1}{(\underline\tau_n-\tau_*^\beta)T}
\sum_{i=[n\tau_*^\beta]+1}^{[n\underline\tau_n]}\check\zeta_{1,i}
\pto
\mathcal H_{1,2},
\label{eq-C-2}
\end{align}
then
\begin{align*}
\frac{1}{\underline\tau_n T}
\sum_{i=1}^{[n\underline\tau_n]}\check\zeta_{1,i}
&=
\frac{\tau_*^\beta}{\underline\tau_n}
\frac{1}{\tau_*^\beta T}\sum_{i=1}^{[n\tau_*^\beta]}\check\zeta_{1,i}
+
\frac{\underline\tau_n-\tau_*^\beta}{\underline\tau_n}
\frac{1}{(\underline\tau_n-\tau_*^\beta)T}
\sum_{i=[n\tau_*^\beta]+1}^{[n\underline\tau_n]}\check\zeta_{1,i}
\\
&\pto
\frac{\tau_*^\beta}{\tau_*^\alpha}\mathcal H_{1,1}
+\biggl(1-\frac{\tau_*^\beta}{\tau_*^\alpha}\biggr)\mathcal H_{1,2},
\end{align*}
\begin{align*}
\frac{1}{\underline\tau_n T}
\sum_{i=1}^{[n\tau_*^\beta]}\check\zeta_{1,i}
-
\frac{[n\tau_*^\beta]}{[n\underline\tau_n]}
\frac{1}{\underline\tau_n T}
\sum_{i=1}^{[n\underline\tau_n]}\check\zeta_{1,i}
&=
\frac{\tau_*^\beta}{\underline\tau_n}
\frac{1}{\tau_*^\beta T}
\sum_{i=1}^{[n\tau_*^\beta]}\check\zeta_{1,i}
-
\frac{[n\tau_*^\beta]}{[n\underline\tau_n]}
\frac{1}{\underline\tau_n T}
\sum_{i=1}^{[n\underline\tau_n]}\check\zeta_{1,i}
\\
&\pto
\frac{\tau_*^\beta}{\tau_*^\alpha}
\Biggl(
\mathcal H_{1,1}
-\biggl(
\frac{\tau_*^\beta}{\tau_*^\alpha}\mathcal H_{1,1}
+\biggl(1-\frac{\tau_*^\beta}{\tau_*^\alpha}
\biggr)\mathcal H_{1,2}
\biggr)
\Biggr)
\\
&=
\frac{\tau_*^\beta}{\tau_*^\alpha}
\biggl(1-\frac{\tau_*^\beta}{\tau_*^\alpha}\biggr)
\bigl(\mathcal H_{1,1}-\mathcal H_{1,2}\bigr)
\neq0,
\end{align*}
and
\begin{align*}
\TUT
&=
\frac{1}{\sqrt{\underline{\tau}_n T}}
\max_{1\le k\le [n\underline{\tau}_n]}
\left\|
\mathcal I_{1,n}^{-1/2}
\left(
\sum_{i=1}^k\check\zeta_{1,i}
-\frac{k}{[n\underline{\tau}_n]}\sum_{i=1}^{[n\underline{\tau}_n]}\check\zeta_{1,i}
\right)
\right\|
\\
&\ge
\frac{1}{\sqrt{\underline{\tau}_n T}}
\left\|
\mathcal I_{1,n}^{-1/2}
\left(
\sum_{i=1}^{[n\tau_*^\beta]}\check\zeta_{1,i}
-\frac{[n\tau_*^\beta]}{[n\underline{\tau}_n]}
\sum_{i=1}^{[n\underline{\tau}_n]}\check\zeta_{1,i}
\right)
\right\|
\\
&=
\sqrt{\underline{\tau}_n T}
\left\|
\mathcal I_{1,n}^{-1/2}
\left(
\frac{1}{\underline\tau_n T}
\sum_{i=1}^{[n\tau_*^\beta]}\check\zeta_{1,i}
-\frac{[n\tau_*^\beta]}{[n\underline{\tau}_n]}
\frac{1}{\underline\tau_n T}
\sum_{i=1}^{[n\underline{\tau}_n]}\check\zeta_{1,i}
\right)
\right\|
\\
&\pto\infty.
\end{align*}
%which implies $P(\TUT>w_1(\epsilon))\lto1$.
\eqref{eq-C-1} can be shown similarly to (4.59) of Tonaki et al. (2020).
%\cite{TKU2020}. 

\textit{Proof of \eqref{eq-C-2}. }
We can prove 
\begin{align}
\frac{1}{(\tau_*^\alpha-\tau_*^\beta)T}
\sum_{i=[n\tau_*^\beta]+1}^{[n\tau_*^\alpha]}\check\zeta_{1,i}
\pto
\mathcal H_{1,2}
\label{eq-C-3}
\end{align}
with the same argument as (4.60) of Tonaki et al. (2020).
%\cite{TKU2020}.
We have
\begin{align*}
\Delta_n
&=
\left\|
\frac{1}{(\underline\tau_n-\tau_*^\beta) T}
\sum_{i=[n\tau_*^\beta]+1}^{[n\underline\tau_n]}\check\zeta_{1,i}
-\frac{1}{(\tau_*^\alpha-\tau_*^\beta) T}
\sum_{i=[n\tau_*^\beta]+1}^{[n\tau_*^\alpha]}\check\zeta_{1,i}
\right\|
\\
&=
\frac{1}{T}
\left\|
\left(
\frac{1}{\underline\tau_n-\tau_*^\beta}
-\frac{1}{\tau_*^\alpha-\tau_*^\beta}
\right)
\sum_{i=[n\tau_*^\beta]+1}^{[n\underline\tau_n]}\check\zeta_{1,i}
+
\frac{1}{\tau_*^\alpha-\tau_*^\beta}
\left(
\sum_{i=[n\tau_*^\beta]+1}^{[n\underline\tau_n]}\check\zeta_{1,i}
-\sum_{i=[n\tau_*^\beta]+1}^{[n\tau_*^\alpha]}\check\zeta_{1,i}
\right)
\right\|
\\
&\le
\left|
\frac{\tau_*^\alpha-\underline\tau_n}
{(\underline\tau_n-\tau_*^\beta)(\tau_*^\alpha-\tau_*^\beta)}
\right|
\frac{1}{T}
\left\|
\sum_{i=[n\tau_*^\beta]+1}^{[n\underline\tau_n]}\check\zeta_{1,i}
\right\|
+
\frac{1}{(\tau_*^\alpha-\tau_*^\beta) T}
\left\|
\sum_{i=[n\tau_*^\beta]+1}^{[n\underline\tau_n]}\check\zeta_{1,i}
-\sum_{i=[n\tau_*^\beta]+1}^{[n\tau_*^\alpha]}\check\zeta_{1,i}
\right\|
\\
&=
\left|
\frac{n^{\epsilon_1}(\tau_*^\alpha-\underline\tau_n)}
{(\underline\tau_n-\tau_*^\beta)(\tau_*^\alpha-\tau_*^\beta)}
\right|
\frac{n^{-\epsilon_1}}{T}
\left\|
\sum_{i=[n\tau_*^\beta]+1}^{[n\underline\tau_n]}\check\zeta_{1,i}
\right\|
+
\frac{1}{(\tau_*^\alpha-\tau_*^\beta) T}
\left\|
\sum_{i=[n\tau_*^\beta]+1}^{[n\underline\tau_n]}\check\zeta_{1,i}
-\sum_{i=[n\tau_*^\beta]+1}^{[n\tau_*^\alpha]}\check\zeta_{1,i}
\right\|
\\
&=:
\left|
\frac{n^{\epsilon_1}(\tau_*^\alpha-\underline\tau_n)}
{(\underline\tau_n-\tau_*^\beta)(\tau_*^\alpha-\tau_*^\beta)}
\right|
\check{\mathcal R}_n
+
\frac{1}{\tau_*^\alpha-\tau_*^\beta}
\check{\mathcal V}_n.
\end{align*}
If we prove $\check{\mathcal R}_n\pto0$ and $\check{\mathcal V}_n\pto0$,
then, from $n^{\epsilon_1}(\tau_*^\alpha-\underline\tau_n)=O_p(1)$ 
and \eqref{eq-C-3}, 
we have $\Delta_n\pto0$ and \eqref{eq-C-2}.  
In the following, we prove them.

Set 
\begin{align*}
\mathcal Z_{k,i}^{[j]}
=\frac{1}{j!}
\partial_\beta^{j}
\Bigl(
\partial_\beta b_{i-1}(\beta)^\TT A_{i-1}^{-1}(\alpha_k^*)\DeX
\Bigr)\Bigl|_{\beta=\beta_k'},
\quad
\mathcal N_{k,i}^{[j]}
=\mathcal Z_{k,i}^{[j]}
-\EE_{\theta_k}[\mathcal Z_{k,i}^{[j]}|\GG].
\end{align*} 
Note that
\begin{align*}
\check\zeta_{k,i}
&=
\partial_\beta b_{i-1}(\hat\beta_k)^\TT 
A_{i-1}^{-1}(\hat\alpha_k)(\DeX-hb_{i-1}(\hat\beta_k))\\
%%%%%
&=\partial_\beta b_{i-1}(\hat\beta_k)^\TT 
A_{i-1}^{-1}(\alpha_k^*)(\DeX-hb_{i-1}(\hat\beta_k))
\\
&\qquad+
\frac{1}{\sqrt n}\int_0^1\left.
\partial_{\alpha}\left(
\partial_\beta b_{i-1}(\hat\beta_k)^\TT 
A_{i-1}^{-1}(\alpha)(\DeX-hb_{i-1}(\hat\beta_k))\right)
\right|_{\alpha=\alpha_k^*+u(\hat\alpha_k-\alpha_k^*)}
\dd u
\sqrt n(\hat\alpha_k-\alpha_k^*)\\
%%%%%
&=\partial_\beta b_{i-1}(\hat\beta_k)^\TT 
A_{i-1}^{-1}(\alpha_k^*)\DeX
-h \partial_\beta b_{i-1}(\hat\beta_k)^\TT 
A_{i-1}^{-1}(\alpha_k^*)b_{i-1}(\hat\beta_k)\\
&\qquad+
\frac{1}{\sqrt n}\int_0^1\left.
\partial_{\alpha}\left(
\partial_\beta b_{i-1}(\hat\beta_k)^\TT 
A_{i-1}^{-1}(\alpha)(\DeX-hb_{i-1}(\hat\beta_k))\right)
\right|_{\alpha=\alpha_k^*+u(\hat\alpha_k-\alpha_k^*)}
\dd u
\sqrt n(\hat\alpha_k-\alpha_k^*)\\
%%%%%
&=\partial_\beta b_{i-1}(\hat\beta_k)^\TT 
A_{i-1}^{-1}(\alpha_k^*)\DeX
+O_p(h)
\\
%%%%%
&=
\sum_{j=0}^{\mma-1}
\mathcal Z_{k,i}^{[j]}
\otimes(\hat\beta_k-\beta_k')^{\otimes j}
+O_p\left(h\lor\frac{\sqrt h}{T^{\mma/2}}\right)
\\
%%%%%
&=
\sum_{j=0}^{\mma-1}
(\mathcal N_{k,i}^{[j]}+\EE_{\theta_k}[\mathcal Z_{k,i}^{[j]}|\GG])
\otimes(\hat\beta_k-\beta_k')^{\otimes j}
+O_p\left(h\lor\frac{\sqrt h}{T^{\mma/2}}\right)
\\
%%%%%
&=
\sum_{j=0}^{\mma-1}
\mathcal N_{k,i}^{[j]}
\otimes(\hat\beta_k-\beta_k')^{\otimes j}
%+O_p\left(h\lor\frac{\sqrt h}{T^{\mmc/2}}\right).
+O_p(h).
\end{align*}
%%%%%%%%%%
Let
\begin{align*}
\mathcal R_n^{[j]}
=
\frac{n^{-\epsilon_1}}{T}
\left\|
\sum_{i=[n\tau_*^\beta]+1}^{[n\underline\tau_n]}
\mathcal N_{1,i}^{[j]}
\right\|,
\quad
\mathcal V_n^{[j]}
=
\frac{1}{T}
\left\|
\sum_{i=[n\underline\tau_n]+1}^{[n\tau_*^\alpha]}
\mathcal N_{1,i}^{[j]}
\right\|.
\end{align*}
We have
\begin{align*}
\check{\mathcal R}_n
&\le
\sum_{j=0}^{\mma-1}
\mathcal R_n^{[j]}
|\hat\beta_1-\beta_1'|^j
+O_p\left(
\frac{n^{1-\epsilon_1}h}{T}
\right)
=\sum_{j=0}^{\mma-1}\mathcal R_n^{[j]}
|\hat\beta_1-\beta_1'|^j
+o_p(1),
\end{align*}
and
\begin{align*}
\check{\mathcal V}_n
=
\frac{1}{T}
\left\|
\sum_{i=[n\underline\tau_n]+1}^{[n\tau_*^\alpha]}\check\zeta_{1,i}
\right\|
\le
\sum_{j=0}^{\mma-1}
\mathcal V_n^{[j]}
|\hat\beta_1-\beta_1'|^j
+O_p(n^{-\epsilon_1})
=
\sum_{j=0}^{\mma-1}\mathcal V_n^{[j]}
|\hat\beta_1-\beta_1'|^j
+o_p(1)
\end{align*}
on $D_n$.
Let $E_n=\{|\hat\beta_1-\beta_1'|\le1\}$.
Noting that 
$P(E_n^{\mathrm c})\lto0$ 
from \textbf{[F1]}, 
for all $\epsilon>0$, 
\begin{align*}
P\bigl(\check{\mathcal R}_n>(\mma+1)\epsilon\bigr)
&\le
P\Biggl(\sum_{j=0}^{\mma-1}
\mathcal R_n^{[j]}|\hat\beta_1-\beta_1'|^j>\mma\epsilon\Biggr)
+o(1)
\nonumber
\\
%%%%%
&\le
P\Biggl(
\sum_{j=0}^{\mma-1}\mathcal R_n^{[j]}|\hat\beta_1-\beta_1'|^j>\mma\epsilon, 
D_n\cap E_n
\Biggr)
+P(D_n^{\mathrm c})+P(E_n^{\mathrm c})+o(1)
\nonumber
\\
%%%%%
&\le
P\Biggl(
\sum_{j=0}^{\mma-1}\mathcal R_n^{[j]}>\mma\epsilon, 
D_n\cap E_n
\Biggr)
+o(1)
\nonumber
\\
%%%%%
&\le
\sum_{j=0}^{\mma-1}
P\bigl(
\mathcal R_n^{[j]}>\epsilon, D_n
\bigr)
+o(1)
\nonumber
\\
%%%%%
&\le
\frac{1}{\epsilon^2}
\sum_{j=0}^{\mma-1}
\EE_{\theta_1}[(\mathcal R_n^{[j]})^2: D_n]
+o(1),
%\label{eq-C-4}
\\
%%%%%%%%%%%%%%%%%%%%
P\bigl(\check{\mathcal V}_n>(\mma+1)\epsilon\bigr)
&\le
P\bigl(\check{\mathcal V}_n>(\mma+1)\epsilon, D_n\bigr)
+P(D_n^{\mathrm c})
\nonumber
\\
%%%%%
&\le
P\Biggl(\sum_{j=0}^{\mma-1}
\mathcal V_n^{[j]}|\hat\beta_1-\beta_1'|^j>\mma\epsilon, D_n\Biggl)
+o(1)
\nonumber
\\
%%%%%
&\le
P\Biggl(\sum_{j=0}^{\mma-1}
\mathcal V_n^{[j]}|\hat\beta_1-\beta_1'|^j>m\epsilon, D_n\cap E_n\Biggl)
+P(E_n^{\mathrm c})+o(1)
\nonumber
\\
%%%%%
&\le
P\Biggl(\sum_{j=0}^{\mma-1}
\mathcal V_n^{[j]}>\mma\epsilon, D_n\cap E_n\Biggl)
+o(1)
\nonumber
\\
%%%%%
&\le
\sum_{j=0}^{\mma-1}
P\bigl(
\mathcal V_n^{[j]}>\epsilon, D_n
\bigr)
+o(1)
\nonumber
\\
%%%%%
&\le
\frac{1}{\epsilon^2}
\sum_{j=0}^{\mma-1}
\EE_{\theta_1}[(\mathcal V_n^{[j]})^2: D_n]
+o(1).
%\label{eq-C-5}
\end{align*}
Since $\EE_{\theta_1}[\|\mathcal N_{1,i}^{[j]}\|^2]\le Ch$,
\begin{align*}
\frac{n^{-2\epsilon_1}([n\tau_*^\alpha]-[n\tau_*^\beta])h}{T^2}
=O\left(\frac{n^{-2\epsilon_1}}{T}\right)=o(1),
\quad
\frac{([n\tau_*^\alpha]-m_n)h}{T^2}
=O\left(\frac{n^{-\epsilon_1}}{T}\right)=o(1),
\end{align*}
we have
$\EE_{\theta_1}[(\mathcal R_n^{[j]})^2:D_n]=o(1)$ and 
$\EE_{\theta_1}[(\mathcal V_n^{[j]})^2:D_n]=o(1)$
for $0\le j\le \mma-1$ as in Lemma \ref{lem1}.
Hence, we get the desired results. 
%$\check{\mathcal R}_n\pto0$ and $\check{\mathcal V}_n\pto0$.

%%%%%%%%%%

(b) Notice that 
\begin{align*}
\check\zeta_{k,i}
&=
\sum_{j=0}^{\mmb-1}
\mathcal N_{k,i}^{[j]}
\otimes(\hat\beta_k-\beta_k')^{\otimes j}
+O_p(h)
\end{align*}
from \textbf{[E4]}.
Then, the desired result can be obtained in the same way as in (a).

%%%%%%%%%%

(c) According to
\begin{align*}
\TUT
&=
\frac{1}{\sqrt{\underline{\tau}_n T}}
\max_{1\le k\le [n\underline{\tau}_n]}
\left\|
\mathcal I_{1,n}^{-1/2}
\left(
\sum_{i=1}^k\check\zeta_{1,i}
-\frac{k}{[n\underline{\tau}_n]}\sum_{i=1}^{[n\underline{\tau}_n]}\check\zeta_{1,i}
\right)
\right\|
\\
&\ge
\frac{1}{\sqrt{\underline{\tau}_n T}}
\left\|
\mathcal I_{1,n}^{-1/2}
\left(
\sum_{i=1}^{[n\tau_*^\beta]}\check\zeta_{1,i}
-\frac{[n\tau_*^\beta]}{[n\underline{\tau}_n]}
\sum_{i=1}^{[n\underline{\tau}_n]}\check\zeta_{1,i}
\right)
\right\|
\\
&=
\sqrt{\frac{T\Deko^2}{\underline{\tau}_n}}
\left\|
\mathcal I_{1,n}^{-1/2}
\left(
\frac{1}{T\Deko}
\sum_{i=1}^{[n\tau_*^\beta]}\check\zeta_{1,i}
-\frac{[n\tau_*^\beta]}{[n\underline{\tau}_n]}
\frac{1}{T\Deko}
\sum_{i=1}^{[n\underline{\tau}_n]}\check\zeta_{1,i}
\right)
\right\|,
\end{align*}
%%%%%
it is sufficient to prove that there exists $c\neq0$ such that
\begin{align}
\mathcal K_n^{(1)}
=
\frac{1}{T\Deko}
\sum_{i=1}^{[n\tau_*^\beta]}\check\zeta_{1,i}
-
\frac{[n\tau_*^\beta]}{[n\underline\tau_n]}
\frac{1}{T\Deko}
\sum_{i=1}^{[n\underline\tau_n]}\check\zeta_{1,i}
\pto c.
\label{eq-C-6}%%%
\end{align}
Notice that there exists $c'\neq0$ such that
\begin{align}
\mathcal K_n^{(2)}
=
\frac{1}{T\Deko}
\sum_{i=1}^{[n\tau_*^\beta]}\check\zeta_{1,i}
-
\frac{[n\tau_*^\beta]}{[n\tau_*^\alpha]}
\frac{1}{T\Deko}
\sum_{i=1}^{[n\tau_*^\alpha]}\check\zeta_{1,i}
\pto c'.
\label{eq-C-7}%%%
\end{align}
as in the proof of Proposition 3 of Tonaki et al. (2021). 
%\cite{TKU2021}. 
Meanwhile, we see
\begin{align*}
\Delta_n
&=\|\mathcal K_n^{(1)}-\mathcal K_n^{(2)}\|
\\
&=
\left\|
\frac{[n\tau_*^\beta]}{[n\underline{\tau}_n]}
\frac{1}{T\Deko}
\sum_{i=1}^{[n\underline{\tau}_n]}\check\zeta_{1,i}
-\frac{[n\tau_*^\beta]}{[n\tau_*^\alpha]}
\frac{1}{T\Deko}
\sum_{i=1}^{[n\tau_*^\alpha]}\check\zeta_{1,i}
\right\|
\\
%%%
&\le
\frac{[n\tau_*^\beta]}{[n\tau_*^\alpha]}
\frac{1}{T\Deko}
\left\|
\sum_{i=1}^{[n\underline{\tau}_n]}\check\zeta_{1,i}
-
\sum_{i=1}^{[n\tau_*^\alpha]}\check\zeta_{1,i}
\right\|
+
n^{\epsilon_1}
\left|
\frac{[n\tau_*^\beta]}{[n\tau_*^\alpha]}
-\frac{[n\tau_*^\beta]}{[n\underline{\tau}_n]}
\right|
\frac{n^{-\epsilon_1}}{T\Deko}
\left\|
\sum_{i=1}^{[n\tau_*^\alpha]}\check\zeta_{1,i}
\right\|
\\
&=:
\frac{[n\tau_*^\beta]}{[n\tau_*^\alpha]}
\check{\mathcal V}_n
+
n^{\epsilon_1}
\left|
\frac{[n\tau_*^\beta]}{[n\tau_*^\alpha]}
-\frac{[n\tau_*^\beta]}{[n\underline{\tau}_n]}
\right|
\check{\mathcal R}_n
\end{align*}
and
\begin{align*}
\check\zeta_{1,i}
&=
\sum_{j=0}^{\mmc-1}
\mathcal N_{1,i}^{[j]}
\otimes(\hat\beta_1-\beta_1')^{\otimes j}
+O_p(h)
\end{align*}
from \textbf{[G3]}.
%%%%%%%%%%
Here, we set
\begin{align*}
\mathcal V_n^{[j]}
=
\frac{1}{T\Deko}
\left\|
\sum_{i=[n\underline\tau_n]+1}^{[n\tau_*^\alpha]}
\mathcal N_{1,i}^{[j]}
\right\|,
\quad
\mathcal R_n^{[j]}
=
\frac{n^{-\epsilon_1}}{T\Deko}
\left\|
\sum_{i=1}^{[n\tau_*^\alpha]}
\mathcal N_{1,i}^{[j]}
\right\|,
\end{align*}
and have, from \textbf{[G5]},
\begin{align*}
\check{\mathcal V}_n
=
\frac{1}{T\Deko}
\left\|
\sum_{i=[n\underline\tau_n]+1}^{[n\tau_*^\alpha]}\check\zeta_{1,i}
\right\|
\le
\sum_{j=0}^{\mmc-1}
\mathcal V_n^{[j]}
|\hat\beta_1-\beta_1'|^j
+O_p\left(
\frac{n^{1-\epsilon_1}h}{T\Deko}
\right)
=
\sum_{j=0}^{\mmc-1}\mathcal V_n^{[j]}
|\hat\beta_1-\beta_1'|^j
+o_p(1)
\end{align*}
and 
\begin{align*}
\check{\mathcal R}_n
&\le
\sum_{j=0}^{\mmc-1}
\mathcal R_n^{[j]}
|\hat\beta_1-\beta_1'|^j
+O_p\left(
\frac{n^{1-\epsilon_1}h}{T\Deko}
\right)
=\sum_{j=0}^{\mmc-1}\mathcal R_n^{[j]}
|\hat\beta_1-\beta_1'|^j
+o_p(1)
\end{align*}
on $D_n$.
Thus, for all $\epsilon>0$, 
\begin{align*}
P\bigl(\check{\mathcal V}_n>(\mmc+1)\epsilon\bigr)
&\le
P\bigl(\check{\mathcal V}_n>(\mmc+1)\epsilon, D_n\bigr)
+P(D_n^{\mathrm c})
\nonumber
\\
%%%%%
&\le
P\Biggl(\sum_{j=0}^{\mmc-1}
\mathcal V_n^{[j]}|\hat\beta_1-\beta_1'|^j>\mmc\epsilon, D_n\Biggl)
+o(1)
\nonumber
\\
%%%%%
&\le
P\Biggl(\sum_{j=0}^{\mmc-1}
\mathcal V_n^{[j]}|\hat\beta_1-\beta_1'|^j>\mmc\epsilon, D_n\cap E_n\Biggl)
+P(E_n^{\mathrm c})+o(1)
\nonumber
\\
%%%%%
&\le
P\Biggl(\sum_{j=0}^{\mmc-1}
\mathcal V_n^{[j]}>\mmc\epsilon, D_n\cap E_n\Biggl)
+o(1)
\nonumber
\\
%%%%%
&\le
\sum_{j=0}^{\mmc-1}
P\bigl(
\mathcal V_n^{[j]}>\epsilon, D_n
\bigr)
+o(1)
\nonumber
\\
%%%%%
&\le
\frac{1}{\epsilon^2}
\sum_{j=0}^{\mmc-1}
\EE_{\theta_1}[(\mathcal V_n^{[j]})^2: D_n]
+o(1),
%\label{eq-C-8}
\\
P\bigl(\check{\mathcal R}_n>(\mmc+1)\epsilon\bigr)
&\le
P\Biggl(
\sum_{j=0}^{\mmc-1}\mathcal R_n^{[j]}|\hat\beta_1-\beta_1'|^j>\mmc\epsilon
\Biggr)
+o(1)
\nonumber
\\
%%%%%
&\le
P\Biggl(
\sum_{j=0}^{\mmc-1}\mathcal R_n^{[j]}|\hat\beta_1-\beta_1'|^j>\mmc\epsilon, E_n
\Biggr)
+P(E_n^{\mathrm c})+o(1)
\nonumber
\\
%%%%%
&\le
P\Biggl(
\sum_{j=0}^{\mmc-1}\mathcal R_n^{[j]}>\mmc\epsilon, E_n
\Biggr)
+o(1)
\nonumber
\\
%%%%%
&\le
\sum_{j=0}^{\mmc-1}
P\bigl(
\mathcal R_n^{[j]}>\epsilon
\bigr)
+o(1)
\nonumber
\\
%%%%%
&\le
\frac{1}{\epsilon^2}
\sum_{j=0}^{\mmc-1}
\EE_{\theta_1}[(\mathcal R_n^{[j]})^2]
+o(1).
%\label{eq-C-9}
\end{align*}
Since
\begin{align*}
\frac{([n\tau_*^\alpha]-m_n)h}{T^2\Deko^2}
=O\left(\frac{n^{-\epsilon_1}}{T\Deko^2}\right)=o(1),
\quad
\frac{n^{-2\epsilon_1}[n\tau_*^\alpha]h}{T^2\Deko^2}
=O\left(\frac{n^{-2\epsilon_1}}{T\Deko^2}\right)=o(1),
\end{align*}
we have
$\EE_{\theta_1}[(\mathcal V_n^{[j]})^2:D_n]=o(1)$ and
$\EE_{\theta_1}[(\mathcal R_n^{[j]})^2]=o(1)$
for $0\le j \le \mmc-1$ as in Lemma \ref{lem1}, that is,
we obtain $\check{\mathcal V}_n\pto0$ and $\check{\mathcal R}_n\pto0$,
which implies $\Delta_n\pto0$. 
This and \eqref{eq-C-7} yield \eqref{eq-C-6}.

%%%%%%%%%%%%%%%%%%%%%%%%%%%%%%

\textit{Step 2.}
Next, we show $P(\TOT>w_1(\epsilon))\lto1$ under $H_1^{(2)}$.

(a) If we prove
\begin{align}
\frac{1}{(\tau_*^\beta-{\overline\tau}_n) T}
\sum_{i=[n\overline\tau_n]+1}^{[n\tau_*^\beta]}\check\zeta_{2,i}
\pto
\mathcal H_{2,1},
%\label{eq-D-1}
\quad
\frac{1}{(1-\tau_*^\beta)T}
\sum_{i=[n\tau_*^\beta]+1}^n\check\zeta_{2,i}
\pto
\mathcal H_{2,2},
\label{eq-D-2}
\end{align}
then
\begin{align*}
\frac{1}{(1-\overline{\tau}_n) T}
\sum_{i=[n\overline{\tau}_n]+1}^{n}\check\zeta_{2,i}
&=
\frac{\tau_*^\beta-\overline{\tau}_n}{1-\overline{\tau}_n}
\frac{1}{(\tau_*^\beta-\overline{\tau}_n) T}
\sum_{i=[n\overline{\tau}_n]+1}^{[n\tau_*^\beta]}\check\zeta_{2,i}
+
\frac{1-\tau_*^\beta}{1-\overline{\tau}_n}
\frac{1}{(1-\tau_*^\beta) T}
\sum_{i=[n\tau_*^\beta]+1}^{n}\check\zeta_{2,i}
\\
&\pto
\frac{\tau_*^\beta-\tau_*^\alpha}{1-\tau_*^\alpha}
\mathcal H_{2,1}
+\biggl(1-\frac{\tau_*^\beta-\tau_*^\alpha}{1-\tau_*^\alpha}\biggr)
\mathcal H_{2,2},
\end{align*}
\begin{align*}
&\frac{1}{(1-\overline{\tau}_n) T}
\sum_{i=[n\overline{\tau}_n]+1}^{[n\tau_*^\beta]}\check\zeta_{2,i}
-\frac{[n\tau_*^\beta]-[n\overline{\tau}_n]}{n-[n\overline{\tau}_n]}
\frac{1}{(1-\overline{\tau}_n) T}
\sum_{i=[n\overline{\tau}_n]+1}^{n}\check\zeta_{2,i}
\\
&=
\frac{\tau_*^\beta-\overline{\tau}_n}{1-\overline{\tau}_n}
\frac{1}{(\tau_*^\beta-\overline{\tau}_n) T}
\sum_{i=[n\overline{\tau}_n]+1}^{[n\tau_*^\beta]}\check\zeta_{2,i}
-\frac{[n\tau_*^\beta]-[n\overline{\tau}_n]}{n-[n\overline{\tau}_n]}
\frac{1}{(1-\overline{\tau}_n) T}
\sum_{i=[n\overline{\tau}_n]+1}^{n}\check\zeta_{2,i}
\\
&\pto
\frac{\tau_*^\beta-\tau_*^\alpha}{1-\tau_*^\alpha}
\Biggl(
\mathcal H_{2,1}
-\biggl(
\frac{\tau_*^\beta-\tau_*^\alpha}{1-\tau_*^\alpha}
\mathcal H_{2,1}
+\biggl(1-\frac{\tau_*^\beta-\tau_*^\alpha}{1-\tau_*^\alpha}\biggr)
\mathcal H_{2,2}
\biggr)
\Biggr)
\\
&=
\frac{\tau_*^\beta-\tau_*^\alpha}{1-\tau_*^\alpha}
\biggl(1-\frac{\tau_*^\beta-\tau_*^\alpha}{1-\tau_*^\alpha}\biggr)
\bigl(\mathcal H_{2,1}-\mathcal H_{2,2}\bigr)
\neq0,
\end{align*}
and
\begin{align*}
\TOT
&=
\frac{1}{\sqrt{(1-\overline{\tau}_n) T}}
\max_{1\le k\le n-[n\overline{\tau}_n]}
\left\|
\mathcal I_{2,n}^{-1/2}
\left(
\sum_{i=[n\overline{\tau}_n]+1}^{[n\overline{\tau}_n]+k}\check\zeta_{2,i}
-\frac{k}{n-[n\overline{\tau}_n]}
\sum_{i=[n\overline{\tau}_n]+1}^{n}\check\zeta_{2,i}
\right)
\right\|
\\
%%%%%
&\ge
\frac{1}{\sqrt{(1-\overline{\tau}_n) T}}
\left\|
\mathcal I_{2,n}^{-1/2}
\left(
\sum_{i=[n\overline{\tau}_n]+1}^{[n\tau_*^\beta]}\check\zeta_{2,i}
-\frac{[n\tau_*^\beta]-[n\overline{\tau}_n]}{n-[n\overline{\tau}_n]}
\sum_{i=[n\overline{\tau}_n]+1}^{n}\check\zeta_{2,i}
\right)
\right\|
\\
%%%%%
&=
\sqrt{(1-\overline{\tau}_n) T}
\left\|
\mathcal I_{2,n}^{-1/2}
\left(
\frac{1}{(1-\overline{\tau}_n) T}
\sum_{i=[n\overline{\tau}_n]+1}^{[n\tau_*^\beta]}\check\zeta_{2,i}
-\frac{[n\tau_*^\beta]-[n\overline{\tau}_n]}{n-[n\overline{\tau}_n]}
\frac{1}{(1-\overline{\tau}_n) T}
\sum_{i=[n\overline{\tau}_n]+1}^{n}\check\zeta_{2,i}
\right)
\right\|
\\
&\pto\infty.
\end{align*}
%we obtain $P(\TOT>w_1(\epsilon))\lto1$.
\eqref{eq-D-2} can be proved in the same way as
\eqref{eq-C-2} and (4.60) of Tonaki et al. (2020). 
%\cite{TKU2020}.

%%%%%%%%%%

(c) According to 
\begin{align*}
\TOT
&=
\frac{1}{\sqrt{(1-\overline{\tau}_n) T}}
\max_{1\le k\le n-[n\overline{\tau}_n]}
\left\|
\mathcal I_{2,n}^{-1/2}
\left(
\sum_{i=[n\overline{\tau}_n]+1}^{[n\overline{\tau}_n]+k}\check\zeta_{2,i}
-\frac{k}{n-[n\overline{\tau}_n]}
\sum_{i=[n\overline{\tau}_n]+1}^{n}\check\zeta_{2,i}
\right)
\right\|
\\
%%%%%
&\ge
\frac{1}{\sqrt{(1-\overline{\tau}_n) T}}
\left\|
\mathcal I_{2,n}^{-1/2}
\left(
\sum_{i=[n\overline{\tau}_n]+1}^{[n\tau_*^\beta]}\check\zeta_{2,i}
-\frac{[n\tau_*^\beta]-[n\overline{\tau}_n]}{n-[n\overline{\tau}_n]}
\sum_{i=[n\overline{\tau}_n]+1}^{n}\check\zeta_{2,i}
\right)
\right\|
\\
%%%%%
&=
\sqrt{\frac{T\Dekt^2}{1-\overline{\tau}_n}}
\left\|
\mathcal I_{2,n}^{-1/2}
\left(
\frac{1}{T\Dekt}
\sum_{i=[n\overline{\tau}_n]+1}^{[n\tau_*^\beta]}\check\zeta_{2,i}
-\frac{[n\tau_*^\beta]-[n\overline{\tau}_n]}{n-[n\overline{\tau}_n]}
\frac{1}{T\Dekt}
\sum_{i=[n\overline{\tau}_n]+1}^{n}\check\zeta_{2,i}
\right)
\right\|,
\end{align*}
it is sufficient to show that there exists $c\neq0$ such that
\begin{align*}
\frac{1}{T\Dekt}
\sum_{i=[n\overline{\tau}_n]+1}^{[n\tau_*^\beta]}\check\zeta_{2,i}
-\frac{[n\tau_*^\beta]-[n\overline{\tau}_n]}{n-[n\overline{\tau}_n]}
\frac{1}{T\Dekt}
\sum_{i=[n\overline{\tau}_n]+1}^{n}\check\zeta_{2,i}
\pto c,
\end{align*}
which can be derived in the same way as \eqref{eq-C-6}.
\end{proof}

%%%%%%%%%%%%%%%%%%%%%%%%%%%%%%%%%%%%%%%%

\begin{proof}[\bf{Proof of Theorem \ref{th5}}]
We have
\begin{align*}
&\Psi_{1,n}(\tau:\beta_1,\beta_2|\alpha)
-\Psi_{1,n}(\tau_*^\beta:\beta_1,\beta_2|\alpha)
\nonumber
\\
&=\sum_{i=1}^{[n\tau]}G_i(\beta_1|\alpha)
+\sum_{i=[n\tau]+1}^{[n\overline{\tau}_n]} G_i(\beta_2|\alpha)
-\sum_{i=1}^{[n\tau_*^\beta]}G_i(\beta_1|\alpha)
-\sum_{i=[n\tau_*^\beta]+1}^{[n\overline{\tau}_n]} G_i(\beta_2|\alpha)
\nonumber
\\
&=\sum_{i=[n\tau_*^\beta]+1}^{[n\tau]}
\Bigl(G_i(\beta_1|\alpha)-G_i(\beta_2|\alpha)\Bigr)
%\label{eq-E-1}
\end{align*}
for $\tau_*^\beta<\tau<\tau_*^\alpha$, and
\begin{align*}
\Psi_{1,n}(\tau:\beta_1,\beta_2|\alpha)
-\Psi_{1,n}(\tau_*^\beta:\beta_1,\beta_2|\alpha)
=\sum_{i=[n\tau]+1}^{[n\tau_*^\beta]}
\Bigl(G_i(\beta_2|\alpha)-G_i(\beta_1|\alpha)\Bigr)
%\label{eq-E-3}
\end{align*}
for $\tau<\tau_*^\beta$.
Therefore, 
by the same proof of Theorems 3 and 4 of Tonaki et al. (2021),
%\cite{TKU2021}, 
we obtain
\begin{align*}
T\Deb^2(\hat\tau_{1,n}^\beta-\tau_*^\beta)
\dto
\underset{v\in\mathbb R}{\mathrm{argmin}}\, \mathbb G_1(v)
\end{align*}
in Case A, 
and 
$T(\hat\tau_{1,n}^\beta-\tau_*^\beta)=O_p(1)$ 
in Case B.
\end{proof}
Theorem \ref{th6} can be proved in the same way as the proof of Theorem \ref{th5}.

\section*{Acknowledgements}
%The authors wish to thank an anonymous referee for careful reading of 
%the manuscript and fruitful remarks that led to an improved version of the paper.
This work was partially supported by JST CREST Grant Number JPMJCR14D7 
and JSPS KAKENHI Grant Number JP17H01100.
% and Cooperative Research Program of the Institute of Statistical Mathematics.

%\bibliographystyle{plain}

%\bibliography{ref}

%%%%%%%%%%%%%%%%%%%% REFERENCES %%%%%%%%%%%%%%%%%%%%

%%%%%%%%%%%%%%%%%%%% END OF REFERENCE %%%%%%%%%%%%%%%%%%%%

\end{document}